\newcommand{\R}{\mathbb{R}}
\newcommand{\N}{\mathbb{N}}
\newcommand{\ep}{\varepsilon}
\newcommand{\pa}{\partial}
\newcommand{\lr}[1]{{}\langle{}#1{}\rangle{}}
\newtheorem{theorem}{Theorem}[section]
\newtheorem{lemma}[theorem]{Lemma}
\newtheorem{proposition}[theorem]{Proposition}
\newtheorem{corollary}[theorem]{Corollary}
\theoremstyle{remark}
\newtheorem{remark}{Remark}[section]
\theoremstyle{definition}
\newtheorem{definition}{Definition}[section]
\numberwithin{equation}{section}
\def\@cite#1#2{[{{\bfseries #1}\if@tempswa , #2\fi}]}
\begin{document}
\begin{center}
\Large{{\bf
Weighted energy estimates for wave equation 
with space-dependent damping term 
for slowly decaying initial data
}}
\end{center}

\vspace{5pt}

\begin{center}
Motohiro Sobajima%
\footnote{
Department of Mathematics, 
Faculty of Science and Technology, Tokyo University of Science,  
2641 Yamazaki, Noda-shi, Chiba, 278-8510, Japan,  
E-mail:\ {\tt msobajima1984@gmail.com}}
and 
Yuta Wakasugi%
\footnote{
Graduate School of Science and Engineering, Ehime University, 
3, Bunkyo-cho, Matsuyama, Ehime, 790-8577, Japan, 
E-mail:\ {\tt wakasugi.yuta.vi@ehime-u.ac.jp}.}
\end{center}

\newenvironment{summary}{\vspace{.5\baselineskip}\begin{list}{}{%
     \setlength{\baselineskip}{0.85\baselineskip}
     \setlength{\topsep}{0pt}
     \setlength{\leftmargin}{12mm}
     \setlength{\rightmargin}{12mm}
     \setlength{\listparindent}{0mm}
     \setlength{\itemindent}{\listparindent}
     \setlength{\parsep}{0pt}
     \item\relax}}{\end{list}\vspace{.5\baselineskip}}
\begin{summary}
{\footnotesize {\bf Abstract.}
This paper is concerned with weighted energy estimates 
for solutions to wave equation 
$\pa_t^2u-\Delta u + a(x)\pa_tu=0$
with space-dependent damping term $a(x)=|x|^{-\alpha}$ 
$(\alpha\in [0,1))$
in an exterior domain $\Omega$ having a smooth boundary. 
The main result asserts that 
the weighted energy estimates with 
weight function like polymonials  
are given and these decay rate are almost sharp, 
even when 
the initial data do not have compact support in $\Omega$. 
The crucial idea is to use special solution of 
$\pa_tu=|x|^\alpha\Delta u$ including Kummer's confluent hypergeometric functions.
}
\end{summary}

{\footnotesize{\it Mathematics Subject Classification}\/ (2010): Primary: 35L20 , Secondary: 35B40, 47B25.}

{\footnotesize{\it Key words and phrases}\/: %
Damped wave equations, exterior domain, diffusion phenomena, weighted energy estimates, 
Kummer's confluent hypergeometric funcions.}

\section{Introduction}

In this paper we consider the wave equation 
with space-dependent damping term 
\begin{equation}\label{dw}
\begin{cases}
\pa_t^2u-\Delta u + a(x)\pa_tu=0, & x\in \Omega,\ t>0, 
\\
u(x,t)=0, & x\in\pa\Omega, \ t>0,
\\
u(x,0)=u_0(x), \quad \pa_tu(x,0)=u_1(x), & x\in \Omega, 
\end{cases}
\end{equation}
where $a(x)=|x|^{-\alpha}$
with a parameter $\alpha\geq 0$,
$\Omega$ is an exterior domain in $\R^N$ $(N\geq 2)$ 
with smooth boundary and satisfies $0\notin \overline{\Omega}$
and 
the initial data
$(u_0, u_1)$ satisfy the compatibility condition of 
order $1$, that is, 
\begin{align}
\label{ini}
	(u_0, u_1) \in (H^2(\Omega) \cap H^1_0(\Omega) ) \times H^1_0(\Omega).
\end{align}
If $\alpha=0$, then \eqref{dw} becomes the usual 
damped wave equation. Although we can take 
$\Omega=\R^N$, we skip the case of whole space 
for the simplicity of terminology.
The term
$a(x)\pa_tu$
describes the damping effect,
which plays a role in reducing the energy of the wave.
We remark that the coefficient $a(x)$ of the damping term 
is uniformly bounded in $\overline{\Omega}$ 
and therefore it is well-known that \eqref{dw} has a unique 
solution $u$ in the following class:
\begin{align}
\label{sol}
u\in C^2([0,\infty);L^2(\Omega))
\cap
C^1([0,\infty);H^1_0(\Omega))
\cap
C([0,\infty);H^2(\Omega) \cap H^1_0(\Omega))
\end{align}
(see Ikawa \cite[Theorem 2]{Ik68}).

Our purpose of this paper is to establish 
the weighted energy estimates and the asymptotic behavior 
of solutions to \eqref{dw} 
without assuming the compactness of the support of initial data.

Matsumura \cite{Ma76} proved that if $\Omega=\R^N$ and $a(x)\equiv 1$, 
then the solution $u$ of \eqref{dw} satisfies 
the energy decay estimate
\[
\int_{\R^N}\Big(|\nabla u(x,t)|^2+|\pa_tu(x,t)|^2\Big)\,dx
\leq 
C(1+t)^{-\frac{N}{2}-1}\|(u_0,u_1)\|_{(H^1\cap L^1)\times (L^2\cap L^1)}^2.
\]
Later on,
it is shown in \cite{YaMi00, Kar00, MaNi03, Nishihara03, HoOg04, Na04} that
$u$ is asymptotically approximated by 
the one of the problem
\begin{align}
\label{heat0}
	\left\{\begin{array}{ll}
	   \pa_tv-\Delta v= 0,& x\in \R^N,\ t>0,
\\
	v(x,0) = u_0(x)+u_1(x),&x\in \R^N.
	\end{array}\right.
\end{align}
In particular, we have
\[
\|u(\cdot,t)-v(\cdot,t)\|_{L^2}=o(t^{-\frac{N}{4}})
\]
as $t\to\infty$.
This is called the diffusion phenomena and studied by several researchers
including exterior domain cases
\cite{Ik02, ChHa03}.
Matsumura \cite{Ma77} also dealt with the
energy decay of solutions to \eqref{dw}
for general cases with $a(x)\geq \lr{x}^{-\alpha}
=(1+|x|^2)^{-\frac{\alpha}{2}}$ $0\leq \alpha\leq 1$. 
On the other hand, Mochizuki \cite{Mo76} 
showed that 
if $0\leq a(x)\leq C\lr{x}^{-\alpha}$ for some $\alpha>1$, 
then in general, the energy of the solution to \eqref{dw} 
does not vanish as $t\to \infty$.
Moreover, the solution is asymptotically equivalent with
the free wave equation.
We remark that they actually treated
the case the damping coefficient also depends on $t$.
These works clarify that the 
threshold of diffusion phenomena is $\alpha=1$.

After that certain decay estimates for weighted energy
\begin{align*}
\int_{\Omega}\Big(|\nabla u|^2+|\pa_tu|^2\Big)
\exp\left(\frac{c_{\delta}|x|^{2-\alpha}}{1+t}\right)\,dx
\leq 
C_\delta(1+t)^{-\frac{N-\alpha}{2-\alpha}-1+\delta}
\int_{\Omega} (|u_0|^2 + |\nabla u_0|^2 + |u_1|^2)
\exp\left( c_{\delta}|x|^{2-\alpha} \right) \,dx
\end{align*}
(with any $\delta>0$ and some $c_\delta,C_\delta>0$)
have been proved by 
Ikehata \cite{Ik05IJPAM} (without compactness of support of initial data)
and Todorova--Yordanov \cite{ToYo09} 
and Radu-Todorova--Yordanov \cite{RTY10} 
when $a(x)$ is radially symmetric. 
Similar weighted energy estimates for higher derivatives of solutions 
are also shown in Radu--Todorova--Yordanov \cite{RTY09}.  
In the case $\Omega=\R^N$ and 
$a(x)=\lr{x}^{-\alpha}$,
the second author proved in \cite{Wa14} that 
the solution $u$ of \eqref{dw} 
has the same asymptotic behavior 
as the one of the following parabolic problem
\begin{align}
\label{heat}
	\left\{\begin{array}{ll}
	   \pa_tv-a(x)^{-1}\Delta v= 0,& x\in \R^N,\ t>0,
\\
	v(x,0) = u_0(x)+a(x)^{-1}u_1(x),&x\in \R^N.
	\end{array}\right.
\end{align}
Then in \cite{So_Wa1,So_Wa2} 
the problem \eqref{dw} in an exterior domain
with non-radially symmetric damping terms satisfying
\[
\lim_{|x|\to\infty}\Big(|x|^{\alpha}a(x)\Big)=a_0>0
\]
could be considered and 
it is shown that the asymptotic behavior 
of solutions to \eqref{dw}
can be also given by the solution of \eqref{heat}, 
however,  
only when the initial data are compactly supported.  

We would summarize that if the initial data 
are not compactly supported, then a kind of 
weighted energy estimates is quite few; 
note that Ikehata gave one of weighted energy estimates in \cite{Ik05IJPAM} but the initial data 
are required to have an exponential decay.
The study of asymptotic behavior of solutions 
seems difficult to treat without weighted energy estimates. 
	
The first purpose of this paper 
is to establish a weighted energy estimates for solutions to \eqref{dw}
with a typical damping $a(x)=|x|^{-\alpha}$, which can be 
applied to initial data with polynomial decay.
The second is to find the asymptotic behavior 
of solutions to \eqref{dw} as a solution of \eqref{heat}
as an application of weighted energy estimates obtained in the first part.

Now we are in a position to state our first result.

\begin{theorem}\label{main1}
Let $u$ be a unique solution of \eqref{dw} with initial data 
$(u_0,u_1)\in (H^2\cap H^1_0(\Omega))\times H^1_0(\Omega)$.
Assume that $N\geq 2$ and there exists $\gamma\in [\alpha,N+2-2\alpha)$ such that 
\begin{equation}\label{ass.thm}
\int_{\Omega}
   \Big(|\nabla u_0(x)|^2+(u_1(x))^2\Big)|x|^{\gamma}
\,dx
<\infty. 
\end{equation}
Then there exists a constant $C>0$ such that 
\begin{align}
\nonumber
&
\int_{\Omega}
   \Big(|\nabla u(x,t)|^2+(\pa_tu(x,t))^2\Big)
(t+|x|^{2-\alpha})^{\frac{\gamma}{2-\alpha}}
\,dx
+
\int_{0}^{t}
\left(
\int_{\Omega}
   |x|^{-\alpha}|u(x,s)|^2
   (s+|x|^{2-\alpha})^{\frac{\gamma}{2-\alpha}}\,dx
\right)
\,ds
\\
\label{w.est}
&\leq
\begin{cases} 
\displaystyle
C
\int_{\Omega}
   \Big(|\nabla u_0(x)|^2+(u_1(x))^2\Big)|x|^{\gamma}
\,dx
+
\int_{\Omega}
   |u_0(x)|^2|x|^{-\alpha}
\,dx
&
\textrm{ if }\gamma< 2-\alpha,
\\[10pt]
\displaystyle
C
\int_{\Omega}
   \Big(|\nabla u_0(x)|^2+(u_1(x))^2\Big)|x|^{\gamma}
\,dx
&\text{otherwise}.
\end{cases}
\end{align}
\end{theorem}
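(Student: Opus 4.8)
\noindent The plan is to run a weighted energy method whose weight is an exact positive solution $\Phi$ of the parabolic equation $\pa_t\Phi=|x|^\alpha\Delta\Phi$ attached to \eqref{heat}. Seeking radial self-similar solutions $\Phi(x,t)=(1+t)^{\beta}w(z)$ with $z=\frac{|x|^{2-\alpha}}{(2-\alpha)^2(1+t)}$ and $\beta=\frac{\gamma}{2-\alpha}$, the substitution $\sigma=\frac{|x|^{2-\alpha}}{(2-\alpha)^2}$ turns $|x|^\alpha\Delta$ into $\sigma\pa_\sigma^2+\frac{N-\alpha}{2-\alpha}\pa_\sigma$, so the profile equation is exactly Kummer's confluent hypergeometric equation with parameters $(-\beta,\frac{N-\alpha}{2-\alpha})$. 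Taking $w(z)=M(-\beta,\frac{N-\alpha}{2-\alpha};-z)$ and invoking the large-argument asymptotics of $M$, one gets a smooth positive weight with $\Phi(x,t)\asymp(t+|x|^{2-\alpha})^{\gamma/(2-\alpha)}$ uniformly; in particular $\Phi(x,0)\asymp|x|^{\gamma}$, so the data norm in \eqref{ass.thm} is comparable to $\int_\Omega\Phi(x,0)(|\nabla u_0|^2+u_1^2)\,dx$, which will be the right-hand side.

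\noindent Next I would test \eqref{dw} with $\Phi\pa_tu$ and with $\Phi u$ (plus a small multiple of the cross quantity $a\Phi u\,\pa_tu$ to build a Lyapunov functional). Since $0\notin\overline{\Omega}$ and $u=\pa_tu=0$ on $\pa\Omega$, every boundary integral created by integration by parts vanishes, so no geometric hypothesis on $\pa\Omega$ is needed. The multiplier $\Phi\pa_tu$ produces $\frac12\frac{d}{dt}\int_\Omega\Phi(|\nabla u|^2+|\pa_tu|^2)\,dx+\int_\Omega a\Phi|\pa_tu|^2\,dx$ together with the indefinite terms $-\frac12\int_\Omega\pa_t\Phi(|\nabla u|^2+|\pa_tu|^2)\,dx$ and $\int_\Omega\pa_tu\,\nabla\Phi\cdot\nabla u\,dx$; the multiplier $\Phi u$, combined with the exactness $\Delta\Phi=a\pa_t\Phi$ (so that $\int_\Omega u\,\nabla\Phi\cdot\nabla u\,dx=-\tfrac12\int_\Omega a\pa_t\Phi\,u^2\,dx$), yields the coercive term $\int_\Omega\Phi|\nabla u|^2\,dx$ and zeroth-order terms. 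Coupling these identities with small parameters gives a functional $E(t)$ comparable to $\int_\Omega\Phi(|\nabla u|^2+|\pa_tu|^2)\,dx$ whose dissipation is to dominate the space-time term in \eqref{w.est}.

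\noindent The crux has two delicate points. First, the indefinite terms must be absorbed even though here the weight \emph{grows} in time ($\pa_t\Phi>0$), the opposite sign to the classical compactly supported setting; this is where the fine structure of the Kummer weight enters, through the pointwise bound
\[
|x|^{\alpha}|\nabla\Phi|^2\le \Phi\,\pa_t\Phi
\]
(equivalently $\sigma (w')^2\le w(\sigma w''+\tfrac{N-\alpha}{2-\alpha}w')$), which lets Young's inequality, using $a^{-1}=|x|^\alpha$, throw the cross term into the damping dissipation, while the surviving $\pa_t\Phi$ terms are tamed by the monotonicity and positivity of $w$ and its first derivative. Second, the naive $\Phi u$ test only controls $u^2$ with the \emph{weaker} weight $a\pa_t\Phi\asymp(s+|x|^{2-\alpha})^{\beta-1}$, whereas \eqref{w.est} asks for the full weight $a\Phi\asymp(s+|x|^{2-\alpha})^{\beta}$; upgrading to $\int_\Omega a\Phi|u|^2\,dx$ is achieved through the precise derivative structure of $\Phi$ and the choice of coupling constants. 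Both properties of $M(-\beta,\frac{N-\alpha}{2-\alpha};-z)$ hold exactly for $\gamma<N+2-2\alpha$, i.e. $\beta<\frac{N-\alpha}{2-\alpha}+1$, which is precisely the threshold past which the required sign/monotonicity of the Kummer profile breaks down, while $\gamma\ge\alpha$ keeps the data norm and the $|x|^{-\alpha}$ factors under control. Once $\frac{d}{dt}E(t)+c\int_\Omega a\Phi|u|^2\,dx\le0$ is established, integrating in $t$ and using $E(t)\gtrsim\int_\Omega\Phi(|\nabla u|^2+|\pa_tu|^2)\,dx$ and $E(0)\lesssim\int_\Omega\Phi(x,0)(|\nabla u_0|^2+u_1^2)\,dx$ yields \eqref{w.est}.

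\noindent The case distinction comes from a weighted Hardy inequality: when $\gamma\ge2-\alpha$ one has $\int_\Omega|x|^{-\alpha}|u_0|^2\,dx\lesssim\int_\Omega|x|^{2-\alpha}|\nabla u_0|^2\,dx\le\int_\Omega|x|^{\gamma}|\nabla u_0|^2\,dx$, so the zeroth-order initial contribution generated by the $\Phi u$ multiplier is absorbed into the gradient data term; when $\gamma<2-\alpha$ the Hardy weight no longer dominates, and this contribution must be retained, giving the extra $\int_\Omega|u_0|^2|x|^{-\alpha}\,dx$. The main obstacle I expect is the first step together with the second delicate point above: constructing $\Phi$ and rigorously verifying, from the theory of Kummer's function, all the required pointwise inequalities uniformly as $\gamma\uparrow N+2-2\alpha$, and then tuning the coupling constants so that the growing-in-time indefinite terms are genuinely absorbed by the damping dissipation and the zeroth-order term is captured with the full weight $\Phi$.
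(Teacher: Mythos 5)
Your guiding idea --- replacing the Gaussian-type weights of the compactly-supported theory by an exact self-similar solution of $\pa_t\Phi=|x|^{\alpha}\Delta\Phi$ built from Kummer's function and comparable to $(t+|x|^{2-\alpha})^{\gamma/(2-\alpha)}$ --- is indeed the paper's crucial idea, and your first multiplier $\Phi\pa_tu$ reproduces essentially Lemma \ref{energy1}. The scheme breaks down, however, at the zeroth-order correction. Testing with $\Phi u$ (full weight $\Phi\asymp\Psi^{\beta}$, $\beta=\frac{\gamma}{2-\alpha}$) and using $\Delta\Phi=a\pa_t\Phi$ gives the identity
\begin{align*}
\frac{d}{dt}\left[\int_\Omega \Phi\, u\,\pa_tu\,dx+\frac12\int_\Omega a\Phi\, u^2\,dx\right]
=\int_\Omega \Phi\,|\pa_tu|^2\,dx-\int_\Omega\Phi\,|\nabla u|^2\,dx
+\int_\Omega \pa_t\Phi\, u\,\pa_tu\,dx+\int_\Omega a\,\pa_t\Phi\, u^2\,dx .
\end{align*}
The positive term $\int_\Omega\Phi|\pa_tu|^2\,dx$ carries the weight $\Psi^{\beta}$ with \emph{no} factor $a=|x|^{-\alpha}$, while the only dissipation available (from the $\Phi\pa_tu$ multiplier) is $\int_\Omega a\Phi|\pa_tu|^2\,dx$; the ratio of the two weights is $|x|^{\alpha}$, which is unbounded on the exterior domain, so for $\alpha\in(0,1)$ no choice of coupling constant or time shift $t_0$ can absorb it. This is exactly the difficulty the paper's construction is built to avoid: in Lemma \ref{energy2} the auxiliary functional $E_\Phi^{\lambda}$ is taken at the level $\lambda=\beta-1$, one power \emph{below} the energy weight (realized as $\Phi_{\lambda_*}^{-1+2\ep_*}\asymp\Psi^{\beta-1}$, a fractional power of a \emph{decaying} Kummer solution), so the corresponding bad term is $E_a^{\beta-\frac{2(1-\alpha)}{2-\alpha}}[t_0,\pa_tu]\le t_0^{-\frac{2(1-\alpha)}{2-\alpha}}E_a^{\beta}[t_0,\pa_tu]$, absorbable for large $t_0$ (Proposition \ref{en.decay1}). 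Your alternative hint --- coupling through $a\Phi u\,\pa_tu$ instead --- repairs this particular term but then the zeroth-order remainders such as $\int_\Omega a^2\pa_t\Phi\,u^2\,dx$ scale exactly like the Hardy bound of the coercive term with a constant proportional to $\beta$, and no small parameter is left; it is precisely this borderline that the paper resolves through the exact cancellation $\Delta\Phi_{\lambda_*}=a\pa_t\Phi_{\lambda_*}$ combined with Lemma \ref{hardy2}, which leaves a clean negative gradient term with no residual zeroth-order term.

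Two further gaps. First, your target inequality $\frac{d}{dt}E(t)+c\int_\Omega a\Phi|u|^2\,dx\le 0$ cannot be established: by Hardy scaling (Lemma \ref{hardy1}) the gradient term $\int_\Omega\Phi|\nabla u|^2\,dx$ controls $\int_\Omega a\Psi^{\beta-1}|u|^2\,dx$ (weight ratio $a\Psi^{-1}\lesssim|x|^{-2}$) but not $\int_\Omega a\Psi^{\beta}|u|^2\,dx$ (ratio $|x|^{-\alpha}$); worse, integrating your inequality would give $\int_0^\infty\int_\Omega a\Psi^{\beta}|u|^2\,dx\,ds<\infty$, which is false for generic data once $\gamma\ge N-2$, since by the diffusion phenomena (Theorem \ref{main3}) one has $\int_\Omega a|u(s)|^2\Psi^{\beta}\,dx\gtrsim s^{\beta-\frac{N-\alpha}{2-\alpha}}$, not integrable in time when $\beta\ge\frac{N-\alpha}{2-\alpha}-1$. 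What Propositions \ref{en.decay0} and \ref{en.decay1} actually deliver is the space--time bound $\int_0^t E_a^{\beta}[t_0,\pa_su](s)\,ds$, i.e.\ the term in \eqref{w.est} with $\pa_su$ in place of $u$; your plan takes the printed statement at face value and commits to a dissipation that is unattainable. Second, the restriction $\gamma<N+2-2\alpha$ does not arise where you claim: your profile $M(-\beta,\frac{N-\alpha}{2-\alpha};-z)=e^{-z}M(\frac{N-\alpha}{2-\alpha}+\beta,\frac{N-\alpha}{2-\alpha};z)$ is positive and increasing for \emph{every} $\beta>0$, so your construction would never see the (nearly sharp) threshold; in the paper it enters through positivity of the decaying solution $\Phi_{\lambda_*}$, which forces $\lambda_*<\frac{N-\alpha}{2-\alpha}$, i.e.\ $\beta<\frac{N-\alpha}{2-\alpha}+1$ (Lemma \ref{phi.beta.lem} {\bf (iv)}). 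Finally, all your integrations by parts are performed directly on data satisfying only \eqref{ass.thm}, for which the finiteness of the weighted integrals is part of what must be proved; the paper first works with $(u_0,u_1)\in\mathcal{H}_c$ (finite propagation speed) and then passes to the limit by a cut-off approximation, and this step cannot be skipped.
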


\begin{remark}
In the case $\gamma=2-\alpha$, 
we can use the weighted Hardy inequality
\[
\left(\frac{N-\alpha}{2}\right)^2\int_{\Omega}|u|^2|x|^{-\alpha}\,dx
\leq 
\int_{\Omega}|\nabla u|^2|x|^{2-\alpha}\,dx
\]
which implies that \eqref{w.est} can be regarded as an estimate 
continuously depending on $\alpha$ 
(see the proof of Proposition \ref{en.decay0} in page \pageref{en.decay0}).
\end{remark}

\begin{remark}
If $\alpha=0$ and $\gamma\leq 2$, 
then the assertion 
of Theorem \ref{main1} does not have novelty. 
Indeed, \cite{Ikehata00} proved that 
if $\alpha=0$, then 
\begin{equation*}
\int_{\Omega}
   \Big(|\nabla u(x,t)|^2+|\partial_tu(x,t)|^2\Big)
\,dx
= O(t^{-2})
\end{equation*}
as $t\to \infty$ by only assuming $\nabla u_0, |x|^2(u_0+u_1)\in L^2(\Omega)$. 
Therefore Theorem \ref{main1} is meaningful 
when either $\alpha\in (0,1)$ or $\gamma>2$ is satisfied.
\end{remark}

\begin{remark}
Here we point out our basic idea of the choice of weight functions 
in the energy functional. If $\Phi\in C^2(\R^N\times [0,\infty))$ is a 
positive function,  
then by putting $w=\Phi^{-1}v$, we can formally compute the following 
weighted estimate for \eqref{heat}:
\begin{align*}
\frac{d}{dt}\int_{\Omega}|v|^2a(x)\Phi^{-1}\,dx
&=
2\int_{\Omega}v\pa_tva(x)\Phi^{-1}\,dx
-
\int_{\Omega}|v|^2a(x)\Phi^{-2}\pa_t\Phi\,dx
\\
&=
2\int_{\Omega}v\Delta v\Phi^{-1}\,dx
-
\int_{\Omega}|v|^2\Phi^{-2}\Delta \Phi\,dx
-
\int_{\Omega}|v|^2\Phi^{-2}\Big(a(x)\pa_t\Phi-\Delta \Phi\Big)\,dx
\\
&=
2\int_{\Omega}w\Delta (\Phi w)\,dx
-
\int_{\Omega}|w|^2\Delta \Phi\,dx
-
\int_{\Omega}|w|^2\Big(a(x)\pa_t\Phi-\Delta \Phi\Big)\,dx.
\end{align*}
Integration by parts we have
\[
\frac{d}{dt}\int_{\Omega}|v|^2a(x)\Phi^{-1}\,dx
\leq 
-2\int_{\Omega}|\nabla w|^2\Phi\,dx
-
\int_{\Omega}|w|^2\Big(a(x)\pa_t\Phi-\Delta \Phi\Big)\,dx.
\]
This means that if $\Phi$ is a positive (super-)solution of \eqref{heat}, 
then the value $\int_{\Omega}|v|^2a(x)\Phi^{-1}\,dx$ is decreasing.
Therefore an $L^2$-estimate with weighted measure $a(x)dx$ can be proved directly from 
an $L^\infty$-estimate for $\Phi$. 
In Ikehata \cite{Ik05IJPAM}, Todorova--Yordanov \cite{ToYo09},
\cite{Wa14}, \cite{So_Wa1} and \cite{So_Wa2} 
essentially used functions similar to Gaussian function
\[
\Phi(x,t)=(1+t)^{-\frac{N-\alpha}{2-\alpha}}e^{-\frac{|x|^{2-\alpha}}{(2-\alpha)^2(1+t)}}
\]
which also appears in the analysis of corresponding semilinear problem, 
see e.g., \cite{ToYo01,IT05,LNZ10,Nishihara10,Wa12}.
However, the technique in these previous papers requires that at least 
the initial data 
decays exponentially at spacial infinity. 
To overcome this difficulty, 
we choose a different solution of $\pa_tv=|x|^{\alpha}\Delta v$ 
via a family of self-similar solutions in Section 2.1, which is 
the crucial point in the present paper. 
\end{remark}

Next we consider diffusion phenomena. 
To state the result, we introduce the heat semigroup corresponding to \eqref{heat}
as follows:
\begin{gather*}
L^p_{d\mu}=\left\{f\in L^p_{\rm loc}(\Omega)\;;\;\|f\|_{L^p_{d\mu}}:=\left(\int_{\Omega}|f(x)|^p\,a(x)dx\right)^{\frac{1}{p}}<\infty\right\},
\end{gather*}
and $L_{\min}u=a(x)^{-1}\Delta u$, with domain $D(L_{\min})=\{u\in C^\infty(\R^N)\;\;u|_{\pa\Omega}=0\}$; 
note that the operator $-L_{\min}$ is nonnegative and symmetric in $L^2_{d\mu}$. 
Define $-L_*$ as the Friedrichs extension of $-L_{\min}$
(for the precise definition see Lemma \ref{L*} 
in page \pageref{L*}). 

Then the statement of diffusion phenomena is the following:
\begin{theorem}\label{main3}
Let $(u_0,u_1)$ satisfy the compatibility condition of order $1$. 
Assume that there exists 
$\gamma\in (2-\alpha, N+2-2\alpha)$
such that 
\[
\mathcal{E}_0
:=\int_{\Omega}
   \Big(|\nabla u_0|^{2}+|u_1|^2\Big)|x|^{\gamma}
\,dx<\infty,
\quad 
\mathcal{E}_1
:=\int_{\Omega}
   \Big(|\nabla u_1|^{2}+|u_2|^2\Big)|x|^{\gamma+2}
\,dx<\infty
\]
with $u_2=-\Delta u_0+a(x)u_1$.
Then 
$u_0+|x|^{\alpha}u_1\in L^2_{d\mu}$ and
there exists a constant $C>0$ such that 
\begin{align}\label{dif.phe}
\Big\|u(t)-e^{tL_*}[u_0+|x|^{\alpha}u_1]\Big\|_{L^2_{d\mu}}
\leq C(1+t)^{-\frac{\gamma-\alpha}{2(2-\alpha)}}
(\mathcal{E}_0+\mathcal{E}_1)^{\frac{1}{2}}.
\end{align}
\end{theorem}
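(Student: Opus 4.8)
The plan is to compare $u$ with the parabolic flow $v(t)=e^{tL_*}[u_0+|x|^{\alpha}u_1]$ and to estimate the difference through a Duhamel formula. As a preliminary one checks that $u_0+|x|^{\alpha}u_1\in L^2_{d\mu}$: since $\gamma\geq 2-\alpha$ and $\Omega$ is bounded away from the origin, the weighted Hardy inequality of the Remark gives $\int_\Omega|u_0|^2|x|^{-\alpha}\,dx\lesssim\int_\Omega|\nabla u_0|^2|x|^{2-\alpha}\,dx\lesssim\mathcal{E}_0$, while $\int_\Omega|x|^{\alpha}|u_1|^2\,dx\lesssim\mathcal{E}_0$ because $\gamma\geq\alpha$. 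Recalling $L_*=a(x)^{-1}\Delta=|x|^{\alpha}\Delta$, the function $v$ solves $a\pa_tv-\Delta v=0$ with $v(0)=u_0+a^{-1}u_1$, whereas \eqref{dw} reads $a\pa_tu-\Delta u=-\pa_t^2u$. Hence $z:=u-v$ satisfies $\pa_tz-L_*z=-a^{-1}\pa_t^2u$ with $z(0)=-a^{-1}u_1$, and Duhamel's formula gives
\[
z(t)=-e^{tL_*}[a^{-1}u_1]-\int_0^t e^{(t-s)L_*}\big[a^{-1}\pa_s^2u(s)\big]\,ds.
\]

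Next I would integrate by parts in $s$. Writing $a^{-1}\pa_s^2u=\pa_s(a^{-1}\pa_su)$ and using $\frac{d}{ds}e^{(t-s)L_*}=-L_*e^{(t-s)L_*}$, the boundary term at $s=t$ produces $-a^{-1}\pa_tu(t)$, the boundary term at $s=0$ cancels exactly the free term $-e^{tL_*}[a^{-1}u_1]$, and one is left with
\[
u(t)-v(t)=-a^{-1}\pa_tu(t)-\int_0^t L_*e^{(t-s)L_*}\big[a^{-1}\pa_su(s)\big]\,ds.
\]
The leading term is handled directly by Theorem \ref{main1}: since $\|a^{-1}\pa_tu\|_{L^2_{d\mu}}^2=\int_\Omega|x|^{\alpha}|\pa_tu|^2\,dx$ and, by Young's inequality, $(t+|x|^{2-\alpha})^{\frac{\gamma}{2-\alpha}}\geq c\,t^{\frac{\gamma-\alpha}{2-\alpha}}|x|^{\alpha}$, the estimate \eqref{w.est} (the case $\gamma>2-\alpha$, whose right-hand side is $C\mathcal{E}_0$) yields $\|a^{-1}\pa_tu(t)\|_{L^2_{d\mu}}\leq C(1+t)^{-\frac{\gamma-\alpha}{2(2-\alpha)}}\mathcal{E}_0^{1/2}$, exactly the rate claimed in \eqref{dif.phe}.

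It remains to bound the Duhamel integral, which is the technical heart of the proof. Here I would invoke the weighted energy estimate of Theorem \ref{main1} applied to $\pa_tu$, which solves the same equation \eqref{dw} with data $(u_1,-u_2)$ and the heavier weight $|x|^{\gamma+2}$; this is precisely what the quantity $\mathcal{E}_1$ controls, and the same interpolation as above converts it into decay of $\|a^{-1}\pa_su\|_{L^2_{d\mu}}$ and $\|a^{-1}\pa_s^2u\|_{L^2_{d\mu}}$. I would split $\int_0^t=\int_0^{t/2}+\int_{t/2}^t$. On $[0,t/2]$, where $t-s\gtrsim t$, the analyticity of $e^{tL_*}$ (Lemma \ref{L*}) gives the smoothing bound $\|L_*e^{(t-s)L_*}\|_{L^2_{d\mu}\to L^2_{d\mu}}\lesssim (t-s)^{-1}$ together with its weighted refinement, which combined with the decay of $\|a^{-1}\pa_su\|_{L^2_{d\mu}}$ reproduces the rate $(1+t)^{-\frac{\gamma-\alpha}{2(2-\alpha)}}$. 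Near $s=t$ one instead reverts to the non–integrated form $\int_{t/2}^t e^{(t-s)L_*}[a^{-1}\pa_s^2u]\,ds$ and uses the contraction property $\|e^{(t-s)L_*}\|_{L^2_{d\mu}\to L^2_{d\mu}}\leq 1$ against the faster decay of $\|a^{-1}\pa_s^2u\|_{L^2_{d\mu}}$, which is integrable in time thanks to the weight $|x|^{\gamma+2}$ in $\mathcal{E}_1$.

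The main obstacle I anticipate is extracting genuine decay — and, more delicately, the sharp exponent $\frac{\gamma-\alpha}{2(2-\alpha)}$ — from the Duhamel integral, since the bare contraction and $\tau^{-1}$ smoothing bounds lose powers of $t$ in the region $s\approx t$. Overcoming this is exactly where the second–order weighted energy $\mathcal{E}_1$ (with the heavier spatial weight $|x|^{\gamma+2}$) and sharp, scale–consistent smoothing–decay estimates for the semigroup $e^{tL_*}$ — of the form $\|e^{\tau L_*}g\|_{L^2_{d\mu}}\lesssim \tau^{-\nu}\|\,|x|^{(2-\alpha)\nu}g\,\|_{L^2_{d\mu}}$, to be established from the analysis of $L_*$ in Lemma \ref{L*} — enter decisively. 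The bookkeeping of balancing the temporal decay obtained by interpolating the space–time weight $(t+|x|^{2-\alpha})^{\gamma/(2-\alpha)}$ against the semigroup smoothing is where the restriction $\gamma<N+2-2\alpha$ is felt.
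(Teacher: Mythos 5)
Your skeleton is the same as the paper's: rewrite \eqref{dw} as $\pa_tu-L_*u=-a^{-1}\pa_t^2u$, apply Duhamel, integrate by parts in $s$ to trade $a^{-1}\pa_s^2u$ for $L_*e^{(t-s)L_*}[a^{-1}\pa_su]$, split at $s=t/2$, and control everything by the weighted energies attached to $\mathcal{E}_0,\mathcal{E}_1$ (Theorems \ref{main1} and \ref{main2}); your boundary term $-a^{-1}\pa_tu(t)$ and your treatment of $[t/2,t]$ correspond to the paper's $\mathcal{J}_2$ and $\mathcal{J}_1$ (modulo the routine verification, done in Lemma \ref{check.domain}, that $u(t)\in D(L_*)$ and $|x|^{\alpha}\pa_t^2u\in L^2_{d\mu}$ so that Duhamel is legitimate). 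The genuine gap is exactly in the piece you call the technical heart, $\int_0^{t/2}L_*e^{(t-s)L_*}[a^{-1}\pa_su(s)]\,ds$ (the paper's $\mathcal{J}_3$). The estimate you postulate there, $\|e^{\tau L_*}g\|_{L^2_{d\mu}}\lesssim\tau^{-\nu}\|\,|x|^{(2-\alpha)\nu}g\,\|_{L^2_{d\mu}}$, cannot be ``established from Lemma \ref{L*}'': analyticity and selfadjointness give only the unweighted bounds $\|e^{\tau L_*}\|\leq 1$ and $\|L_*e^{\tau L_*}\|\leq\tau^{-1}$; any conversion of spatial weights into time decay requires the Nash--Sobolev type $L^p_{d\mu}$--$L^2_{d\mu}$ bounds of Lemma \ref{embedding2} and Remark \ref{rem2.3}, which is what the paper actually invokes. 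Worse, the estimate is \emph{false} once $\nu>\frac{N-\alpha}{2(2-\alpha)}$: already for $\alpha=0$ and the free heat semigroup, a fixed $g\geq 0$, $g\not\equiv 0$, compactly supported satisfies $\|e^{\tau\Delta}g\|_{L^2}\gtrsim\tau^{-N/4}$ as $\tau\to\infty$, while your right-hand side is $O(\tau^{-\nu})$. The semigroup's decay saturates at the $L^1_{d\mu}$--$L^2_{d\mu}$ rate no matter how strongly the datum decays in space. Since Theorem \ref{main3} allows $\gamma$ up to $N+2-2\alpha$, matching $\nu$ to the available weight (e.g.\ $2(2-\alpha)\nu=\gamma-2\alpha$, which is what the dissipation functional supports) forces $\nu\geq\frac{N-\alpha}{2(2-\alpha)}$ as soon as $\gamma\geq N+\alpha$, i.e.\ precisely in part of the admissible range. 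This saturation is why the paper splits $\mathcal{J}_3$ into the two cases $2-\alpha\leq\gamma\leq N+\alpha$ (H\"older with exponent $p(\theta)>1$ and \eqref{1-4}) and $N+\alpha<\gamma<N+2-2\alpha$ ($p=1$, where the rate caps at $\frac{N-\alpha}{2(2-\alpha)}+\frac12=\frac{N+2-2\alpha}{2(2-\alpha)}$ and the hypothesis $\gamma<N+2-2\alpha$ is exactly what makes this cap beat the target $\frac{\gamma-\alpha}{2(2-\alpha)}$).

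There is a second, quantitative defect even in the subcritical regime $\nu<\frac{N-\alpha}{2(2-\alpha)}$ where a weighted smoothing bound does hold: proving it from \eqref{1-4} by H\"older necessarily loses an $\ep$, because the borderline weight needed for integrability at spatial infinity on an exterior domain is strict ($\int_\Omega|x|^{-(2-\alpha)\nu r-\alpha}dx<\infty$ requires $(2-\alpha)\nu r+\alpha>N$), so one only gets $\tau^{-\nu+\ep}$. Your scheme pairs this against the \emph{pointwise} decay $\|a^{-1}\pa_su(s)\|_{L^2_{d\mu}}\lesssim(1+s)^{-\frac{\gamma-\alpha}{2(2-\alpha)}}$ from Theorem \ref{main1}, and the exponents then balance with zero slack, so the $\ep$-loss would appear in \eqref{dif.phe}, which claims the clean rate. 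The paper avoids this by never using pointwise decay for this term: it uses the time-integrated dissipation $\int_0^tE_a^{\frac{\gamma}{2-\alpha}}[t_0,\pa_tu](s)\,ds\leq M_0\mathcal{E}_0$ (the second term on the left of \eqref{w.est}) together with Cauchy--Schwarz in $s$, which contributes an extra factor $t^{-1/2}$ and creates the margin $\frac{1-\alpha}{2-\alpha}$ that absorbs the $\theta$-loss. Note also that your fallback of bare analyticity, $\|L_*e^{(t-s)L_*}\|\lesssim(t-s)^{-1}$ against pointwise decay, yields the sharp rate only when $\frac{\gamma-\alpha}{2(2-\alpha)}<1$, i.e.\ $\gamma<4-\alpha$; for $N\geq3$ the theorem permits larger $\gamma$, and there this argument caps at $t^{-1}$. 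So the architecture is right, but closing the proof requires the $L^p_{d\mu}$--$L^2_{d\mu}$ machinery, the dissipation integral, and the two-case analysis in $\gamma$; none of these can be replaced by the weighted smoothing estimate as you state it.
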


\begin{remark}
Under the assumption in Theorem \ref{main3}, we have 
$u_0,|x|^{-\alpha}u_1\in L^p_{d\mu}$ for $p\in (\frac{2(N-\alpha)}{N+\gamma-2},2]$ 
by the simple calculation with H\"older's inequality
\[
\big\||x|^\alpha u_1\big\|_{L^p_{d\mu}}
\leq 
\left(
\int_{\Omega}|u_1|^{2}|x|^{\gamma}\,dx\right)^{\frac{1}{2}}
\left(
\int_{\Omega}|x|^{-N-\frac{N+\gamma -2\alpha}{2-p}(p-\frac{2(N-\alpha)}{N+\gamma -2\alpha})
}\,dx\right)^{\frac{1}{p}-\frac{1}{2}}<\infty.
\]
This gives 
a decay estimate for $e^{tL_*}[u_0+|x|^{\alpha}u_1]$ as
\[
\Big\|e^{tL_*}[u_0+|x|^{\alpha}u_1]\Big\|_{L^2_{d\mu}}
\leq C(1+t)^{-\frac{\gamma-\alpha}{2(2-\alpha)}+\frac{1-\alpha}{2-\alpha}+\ep}
\| u_0+|x|^{-\alpha}u_1 \|_{L^p_{d\mu}}
\]
for sufficiently small $\ep > 0$
(see also Remark \ref{rem2.3} in page \pageref{rem2.3}).
Therefore the estimate \eqref{dif.phe} enables us to determine 
the asymptotic behavior of solution to the problem \eqref{dw} 
with non-compactly supported initial data. 
\end{remark}

Finally, we give a corollary of Theorems \ref{main1} and \ref{main3} 
with decay estimates similar to corresponding heat equation 
\eqref{heat} 
for initial data for a certain class 
which also contains functions behave like polynomials.
\begin{corollary}\label{coro}
Let $(u_0,u_1)$ satisfy the compatibility condition of order $1$. 
Assume that 
\[
\int_{\Omega}
   \Big(|\nabla u_0|^{2}+|u_1|^2\Big)|x|^{N+2-2\alpha}
\,dx<\infty.
\]
Then for every $\ep>0$,  
\[
\int_{\Omega}
   \Big(|\nabla u(x,t)|^2+(\pa_tu(x,t))^2\Big)
(t+|x|^{2-\alpha})^{\frac{N-\alpha}{2-\alpha}+1-\frac{\ep}{2-\alpha}}
\,dx
=O(1)
\]
as $t\to \infty$. 
Moreover, further assume that 
\[
\int_{\Omega}
   \Big(|\nabla u_1|^{2}+\big|\Delta u_0-a(x)u_1\big|^2\Big)|x|^{N-2\alpha}
\,dx<\infty.
\]
Then for every $\ep>0$,
\begin{align*}
\Big\|u(t)-e^{tL_*}[u_0+|x|^{\alpha}u_1]\Big\|_{L^2_{d\mu}}
&=
O(t^{-\frac{N-\alpha}{2(2-\alpha)}-\frac{1-\alpha}{2-\alpha}+\ep}), 
\\
\Big\|e^{tL_*}[u_0+|x|^{\alpha}u_1]\Big\|_{L^2_{d\mu}}
&=
O(t^{-\frac{N-\alpha}{2(2-\alpha)}+\ep})
\end{align*}
as $t\to \infty$. 
\end{corollary}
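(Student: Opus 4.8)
The plan is to read Corollary \ref{coro} off Theorems \ref{main1} and \ref{main3} together with Remark \ref{rem2.3}; the only genuine point is that the hypotheses are imposed at the critical weight exponent $\gamma=N+2-2\alpha$, which is excluded from the half-open ranges of both theorems. I would therefore fix a small $\ep>0$, run every argument at the subcritical exponent $\gamma_\ep:=N+2-2\alpha-\ep$, and let $\ep\downarrow0$ at the end. The common mechanism is that $0\notin\overline{\Omega}$ forces $|x|\ge c>0$ on $\Omega$, so $|x|^{\beta'}\le C|x|^{\beta}$ on $\Omega$ whenever $\beta'\le\beta$; hence each endpoint integrability assumption in the statement automatically implies its subcritical version with $\gamma$ replaced by $\gamma_\ep$.

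For the first (energy) assertion I would apply Theorem \ref{main1} with $\gamma=\gamma_\ep$. The hypothesis \eqref{ass.thm} for $\gamma_\ep$ follows from the assumed finiteness of $\int_\Omega(|\nabla u_0|^2+|u_1|^2)|x|^{N+2-2\alpha}\,dx$ by the weight comparison above, and for $\ep\le N-\alpha$ one has $\gamma_\ep\ge2-\alpha$, so the ``otherwise'' branch of \eqref{w.est} applies with a finite constant on its right-hand side. It then suffices to note $\frac{\gamma_\ep}{2-\alpha}=\frac{N-\alpha}{2-\alpha}+1-\frac{\ep}{2-\alpha}$, exactly the exponent in the claim; dropping the nonnegative spacetime integral on the left of \eqref{w.est} gives the stated $O(1)$ bound, and the case of larger $\ep$ is weaker still.

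For the two diffusion assertions I would again take $\gamma=\gamma_\ep$ and invoke Theorem \ref{main3}. Here $\mathcal{E}_0$ is finite by the same weight comparison from the main hypothesis, and $\mathcal{E}_1$ is finite by the additional hypothesis, whose integrand is precisely $|\nabla u_1|^2+|u_2|^2$ since $|u_2|^2=|\Delta u_0-a(x)u_1|^2$; thus the two displayed assumptions of the corollary supply exactly the finiteness of $\mathcal{E}_0$ and $\mathcal{E}_1$ for every subcritical $\gamma_\ep$. Theorem \ref{main3} then delivers the first displayed estimate with rate $-\frac{\gamma_\ep-\alpha}{2(2-\alpha)}=-\frac{N-\alpha}{2(2-\alpha)}-\frac{1-\alpha}{2-\alpha}+\frac{\ep}{2(2-\alpha)}$, which is the claimed rate after relabelling the free parameter. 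For the last displayed estimate I would instead use the heat-semigroup bound of Remark \ref{rem2.3}: the main hypothesis alone gives $u_0,|x|^{-\alpha}u_1\in L^p_{d\mu}$ for admissible $p$ (the lower cutoff $\frac{2(N-\alpha)}{N+\gamma_\ep-2}$ tends to $1$ as $\ep\downarrow0$), and the exponent $-\frac{\gamma_\ep-\alpha}{2(2-\alpha)}+\frac{1-\alpha}{2-\alpha}+\ep'$ recorded there collapses to $-\frac{N-\alpha}{2(2-\alpha)}+\ep$.

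In short, no estimate beyond Theorems \ref{main1}, \ref{main3} and Remark \ref{rem2.3} is needed, so there is no substantial obstacle. The only points requiring care are the passage from the critical exponent to $\gamma_\ep$ with the attendant $\ep$-loss, and checking that the three exponent computations close up to the advertised rates uniformly as $\ep\downarrow0$.
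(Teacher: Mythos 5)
Your handling of the first assertion (Theorem \ref{main1} applied at $\gamma_\varepsilon=N+2-2\alpha-\varepsilon$, using that $0\notin\overline{\Omega}$ gives $|x|\geq c>0$ on $\Omega$ so the critical-weight hypothesis implies every subcritical one) and of the last assertion (Remark \ref{rem2.3} together with the remark following Theorem \ref{main3}, with the exponent collapsing to $-\frac{N-\alpha}{2(2-\alpha)}+\varepsilon$) is correct, and this is indeed the route the paper intends, since the corollary is presented as a direct consequence of Theorems \ref{main1} and \ref{main3} with no separate proof.

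There is, however, a genuine gap in your treatment of the difference estimate. You assert that the two displayed hypotheses ``supply exactly the finiteness of $\mathcal{E}_0$ and $\mathcal{E}_1$ for every subcritical $\gamma_\varepsilon$.'' This is false as written: in Theorem \ref{main3} the quantity $\mathcal{E}_1$ carries the weight $|x|^{\gamma+2}$, so taking $\gamma=\gamma_\varepsilon=N+2-2\alpha-\varepsilon$ requires
\[
\int_{\Omega}\Big(|\nabla u_1|^{2}+|u_2|^{2}\Big)|x|^{N+4-2\alpha-\varepsilon}\,dx<\infty,
\]
whereas the corollary assumes this integral to be finite only with the weight $|x|^{N-2\alpha}$. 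Your comparison mechanism $|x|^{\beta'}\leq C|x|^{\beta}$ for $\beta'\leq\beta$ goes the wrong way here: $N+4-2\alpha-\varepsilon>N-2\alpha$, and since $\Omega$ is an unbounded exterior domain, finiteness against the smaller power does not imply finiteness against the larger one. With the hypothesis as printed, Theorem \ref{main3} is applicable only for $\gamma\leq N-2-2\alpha$ (which in addition forces $N-2-2\alpha>2-\alpha$, i.e.\ $N\geq 5$), and then it yields the rate $t^{-\frac{N-2-3\alpha}{2(2-\alpha)}+\varepsilon}$, which misses the claimed rate $t^{-\frac{N+2-3\alpha}{2(2-\alpha)}+\varepsilon}$ by a factor $t^{\frac{2}{2-\alpha}}$. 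In other words, either the additional hypothesis must carry the weight $|x|^{N+4-2\alpha}$ (in which case your argument closes exactly as written), or the claimed rate is not reachable by this argument; the printed statement appears to contain a typo, and a correct proof must detect this mismatch between $N-2\alpha$ and $\gamma_\varepsilon+2$ rather than declare the weights to agree.
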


This paper is organized as follows. 
In Section 2, we construct a 
suitable weight function from 
a self-similar solution of \eqref{heat} 
with a parameter, which is 
quite different from that in 
\cite{Ik05IJPAM}, \cite{ToYo09}, \cite{RTY10}, \cite{Wa14}, \cite{So_Wa1} 
and \cite{So_Wa2} as mentioned before. 
We also mention the properties of the semigroup generated 
by $L_*=a(x)\Delta$.
In Section 3, the weighted energy estimates 
for solutions to \eqref{dw} are proved. 
Section 4 is devoted to show ones for higher derivatives. 
Finally, 
the asymptotic behavior of solutions 
to \eqref{dw} is given in Section 5.

\section{Preliminaries}

\subsection{Self-similar solution and Kummer's confluent hypergeometric function}
To construct a suitable weight function, 
we start with a construction of radially 
symmetric self-similar solutions of the following 
heat equation
\begin{equation}\label{heat}
\pa_t \Phi = |x|^{\alpha} \Delta \Phi, \quad x\in \R^N, \ t>0.
\end{equation}
If $\Phi$ is a solution of \eqref{heat}. Then we easily see that 
\[
\lambda^{-\widetilde{\beta}}\Phi(\lambda x,\lambda^{2-\alpha}t)
\]
with $\widetilde{\beta} \in \R$
is also a solution of \eqref{heat}.
The following is the characterization of radially symmetric 
self-similar solutions. 

\begin{lemma}\label{self.sim}
Let $\widetilde{\beta}\in\R$. A radial solution $\Phi$ of \eqref{heat} satisfies 
\begin{equation}\label{scaling}
\Phi(x,t)=\lambda^{-\widetilde{\beta}}\Phi(\lambda x,\lambda^{2-\alpha}t), 
\quad 
x\in \R^N\setminus\{0\}, t>0
\end{equation}
for every $\lambda>0$ if and only if 
\begin{equation}\label{sol.form}
\Phi(x,t)=
t^{\frac{\widetilde{\beta}}{2-\alpha}}
\varphi\left(\frac{|x|^{2-\alpha}}{(2-\alpha)^2t}\right),
\quad 
x\in \R^N\setminus\{0\}, t>0
\end{equation}
where $\varphi\in C^\infty((0,\infty))$ satisfies
\begin{align}
\label{eq:varphi1}
s\varphi''(s)
+\left(\frac{N-\alpha}{2-\alpha}+s\right)\varphi'(s)
-\frac{\widetilde{\beta}}{2-\alpha}\varphi(s)
=0, \quad s>0.
\end{align}
\end{lemma}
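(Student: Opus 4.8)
The plan is to split the equivalence into two independent reductions. First I would show, using only the algebraic structure, that for a radial function the scaling identity \eqref{scaling} holds for every $\lambda>0$ if and only if $\Phi$ admits the self-similar representation \eqref{sol.form} for some one-variable profile $\varphi$; no use of the equation \eqref{heat} is needed here. Second, restricting to such self-similar $\Phi$, I would show that $\Phi$ solves \eqref{heat} if and only if $\varphi$ solves the ODE \eqref{eq:varphi1}. Combining these two reductions with the standing hypothesis that $\Phi$ is a solution of \eqref{heat} then yields the stated equivalence.

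For the forward direction of the first reduction, I would exploit that \eqref{scaling} holds for all $\lambda>0$ and specialize to $\lambda=t^{-1/(2-\alpha)}$, which normalizes the time variable via $\lambda^{2-\alpha}t=1$ and gives $\Phi(x,t)=t^{\widetilde{\beta}/(2-\alpha)}\Phi(t^{-1/(2-\alpha)}x,1)$. Radial symmetry lets me regard $\Phi(\cdot,1)$ as a function of $|x|$ alone, and the change of variables $s=|x|^{2-\alpha}/((2-\alpha)^2 t)$, for which $t^{-1/(2-\alpha)}|x|=((2-\alpha)^2 s)^{1/(2-\alpha)}$, repackages this dependence into a profile $\varphi$ of $s$, producing \eqref{sol.form}. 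The converse is immediate: for $\Phi$ of the form \eqref{sol.form} the variable $s$ is invariant under $(x,t)\mapsto(\lambda x,\lambda^{2-\alpha}t)$ while the prefactor acquires exactly the factor $\lambda^{\widetilde{\beta}}$, which is \eqref{scaling}.

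The analytic core is the second reduction. Writing $\kappa=\widetilde{\beta}/(2-\alpha)$, I would insert $\Phi=t^{\kappa}\varphi(s)$ into \eqref{heat} and compute both sides by the chain rule, using the radial Laplacian $\Delta=\pa_r^2+\frac{N-1}{r}\pa_r$ with $r=|x|$. Since $\pa_t s=-s/t$, the time derivative is $\pa_t\Phi=t^{\kappa-1}(\kappa\varphi-s\varphi')$. For the spatial term the decisive bookkeeping, after multiplying by $r^\alpha$, is that $r^\alpha(s')^2=s/t$ and $r^\alpha\big(s''+\tfrac{N-1}{r}s'\big)=\frac{N-\alpha}{(2-\alpha)t}$, so that $|x|^\alpha\Delta\Phi=t^{\kappa-1}\big(s\varphi''+\frac{N-\alpha}{2-\alpha}\varphi'\big)$. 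Equating the two expressions and cancelling the common nonvanishing factor $t^{\kappa-1}$ yields \eqref{eq:varphi1}; every step is reversible, giving the asserted equivalence.

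I expect the main obstacle to be this Laplacian computation, and in particular verifying the identity $r^\alpha\big(s''+\tfrac{N-1}{r}s'\big)=\frac{N-\alpha}{(2-\alpha)t}$: the powers of $r$ must cancel exactly so that the right-hand side depends only on $s$ and $t$, and this cancellation is precisely what singles out the similarity exponent $2-\alpha$ and makes the reduction to an ODE possible. A secondary point requiring care is regularity bookkeeping, namely that a radial solution of \eqref{heat} is smooth enough on $|x|>0$ to justify the chain-rule manipulations and to guarantee $\varphi\in C^\infty((0,\infty))$.
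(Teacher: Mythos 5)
Your proposal is correct and follows essentially the same route as the paper: the backward direction specializes $\lambda$ to normalize the time variable (you take $\lambda=t^{-1/(2-\alpha)}$, the paper takes $\lambda=((2-\alpha)^2t)^{-1/(2-\alpha)}$) and uses radiality to extract the profile $\varphi$, while the equivalence between \eqref{heat} and the ODE \eqref{eq:varphi1} is a chain-rule computation that the paper dismisses as ``direct calculation'' but you carry out explicitly, with the key identities $r^{\alpha}(s')^2=s/t$ and $r^{\alpha}\bigl(s''+\tfrac{N-1}{r}s'\bigr)=\tfrac{N-\alpha}{(2-\alpha)t}$ checked correctly. Your cleaner separation of the purely algebraic scaling step from the PDE step is a presentational refinement, not a different argument.
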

\begin{proof}
By direct calculation, we can verify that 
the function $t^{\frac{\widetilde{\beta}}{2-\alpha}}
\varphi\big(\frac{|x|^{2-\alpha}}{(2-\alpha)^2t}\big)$
satisfies both \eqref{heat} and \eqref{scaling}
if $\varphi$ satisfies \eqref{eq:varphi1}.
Conversely, let $\Phi$ satisfy \eqref{heat} and \eqref{scaling}. Then 
choosing $\lambda=((2-\alpha)^2t)^{-\frac{1}{2-\alpha}}$,  we see by \eqref{scaling} that 
\[
\Phi(x,t)=
((2-\alpha)^2t)^{\frac{\widetilde{\beta}}{2-\alpha}}
\Phi\left(((2-\alpha)^2t)^{-\frac{1}{2-\alpha}}x,(2-\alpha)^{-2}\right),
\quad 
x\in \R^N\setminus\{0\}, t>0.
\] 
Therefore 
noting that $\Phi$ is radial, by taking 
\[
\varphi(s)=
(2-\alpha)^{\frac{2\widetilde{\beta}}{2-\alpha}}
\Phi\left(s^{\frac{1}{2-\alpha}}\frac{x}{|x|}, (2-\alpha)^{-2}\right), 
\quad 
x\in \R^N\setminus\{0\}, t>0,
\]
we have \eqref{sol.form} and \eqref{eq:varphi1}.
\end{proof}

\begin{lemma}\label{varphi}
Let $\beta\in \R$. 
Assume that $\varphi$ satisfies  
\begin{equation}\label{eq:varphi}
s\varphi''(s)+\Big(\frac{N-\alpha}{2-\alpha}+s\Big)\varphi'(s)
+\beta\varphi(s)=0,  \quad s>0
\end{equation}
with $\lim_{s\downarrow0}\varphi(s)=1$. 
Then 
\[
\varphi(s)=e^{-s}
M\left(\frac{N-\alpha}{2-\alpha}-\beta,\frac{N-\alpha}{2-\alpha};s\right),
\]
where $M(\cdot,\cdot;\cdot)$ is the Kummer's confluent hypergeometric function 
(see Definition \ref{def.kummer}).
\end{lemma}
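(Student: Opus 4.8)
The plan is to reduce \eqref{eq:varphi} to the standard Kummer (confluent hypergeometric) equation by factoring out the expected exponential decay, and then to exploit the structure of the regular singular point at $s=0$ so that the normalization $\lim_{s\downarrow 0}\varphi(s)=1$ pins down the solution uniquely.

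First I would set $\varphi(s)=e^{-s}\psi(s)$ and substitute into \eqref{eq:varphi}. Using $\varphi'=e^{-s}(\psi'-\psi)$ and $\varphi''=e^{-s}(\psi''-2\psi'+\psi)$, dividing by $e^{-s}$ and collecting the $\psi'$ and $\psi$ terms, one obtains
\[
s\psi''(s)+\left(\frac{N-\alpha}{2-\alpha}-s\right)\psi'(s)-\left(\frac{N-\alpha}{2-\alpha}-\beta\right)\psi(s)=0.
\]
This is exactly Kummer's equation $sw''+(b-s)w'-aw=0$ with $b=\frac{N-\alpha}{2-\alpha}$ and $a=\frac{N-\alpha}{2-\alpha}-\beta$, whose solution that is analytic at the origin and normalized by $w(0)=1$ is $M(a,b;s)$ (Definition \ref{def.kummer}). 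Hence $\varphi(s)=e^{-s}M\big(\frac{N-\alpha}{2-\alpha}-\beta,\frac{N-\alpha}{2-\alpha};s\big)$ solves \eqref{eq:varphi} and satisfies $\varphi(0)=1$, which gives the existence part.

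It then remains to check that the condition $\lim_{s\downarrow0}\varphi(s)=1$ selects this solution uniquely. Since \eqref{eq:varphi} is a second-order linear ODE with a regular singular point at $s=0$, I would read off its indicial equation: inserting $\varphi\sim s^{r}$ into the leading part $s\varphi''+\frac{N-\alpha}{2-\alpha}\,\varphi'$ gives $r\big(r-1+\frac{N-\alpha}{2-\alpha}\big)=0$, so the exponents are $r=0$ and $r=1-\frac{N-\alpha}{2-\alpha}$. Because $N\geq2$ and $\alpha\in[0,1)$ we have $\frac{N-\alpha}{2-\alpha}\geq1$, so the second exponent is $\leq 0$; a solution linearly independent of the analytic one therefore behaves like $s^{\,1-(N-\alpha)/(2-\alpha)}$ as $s\downarrow0$ (and like $\log s$ in the borderline case $N=2$, where the two exponents coincide). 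In every case this second solution is unbounded at the origin, so any solution of \eqref{eq:varphi} with a finite limit at $s=0$ must be a scalar multiple of $e^{-s}M(a,b;s)$, and the normalization $\lim_{s\downarrow0}\varphi=1$ forces that scalar to equal $1$.

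The computation in the substitution step is routine; the point demanding a little care is the uniqueness argument, specifically verifying that the second Frobenius solution is genuinely singular at $s=0$ throughout the parameter range $N\geq2$, $\alpha\in[0,1)$, including the degenerate case $N=2$ in which the indicial exponents collide and a logarithmic term enters.
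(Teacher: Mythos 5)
Your proposal is correct, and its first half is exactly the paper's proof: the paper also sets $\psi(s)=e^{s}\varphi(s)$ and arrives at the same Kummer equation with parameters $b=\frac{N-\alpha}{2-\alpha}-\beta$, $c=\frac{N-\alpha}{2-\alpha}$. Where you diverge is the uniqueness step. The paper invokes its Lemma~A.2\,(i), asserting that $\bigl(M(b,c;\cdot),U(b,c;\cdot)\bigr)$ is a fundamental system, writes $\psi=k_1M+k_2U$, and kills $k_2$ using the fact that $U$ is unbounded at $s=0$; you instead argue directly from Frobenius theory at the regular singular point $s=0$: the indicial exponents are $0$ and $1-\frac{N-\alpha}{2-\alpha}\leq 0$, so every solution independent of the analytic one blows up (like $s^{1-(N-\alpha)/(2-\alpha)}$, or like $\log s$ when $N=2$ and the exponents collide), hence boundedness at the origin plus the normalization pins down $\varphi$. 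Your route is more self-contained and, arguably, more robust: the paper defines $U(b,c;\cdot)$ only for $b>0$ (via an integral representation), i.e.\ only for $\beta<\frac{N-\alpha}{2-\alpha}$, whereas the lemma is stated for all $\beta\in\R$; your indicial-equation argument needs no second Kummer function at all and covers the full parameter range, including the degenerate case $N=2$, which the paper's citation-based argument glosses over. The cost is that you must carry out the (routine but nontrivial) Frobenius analysis, including the possibility of a logarithmic second solution when the exponents differ by a nonnegative integer, rather than quoting a ready-made fundamental system.
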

\begin{proof}
Taking $\psi(s)=e^{s}\varphi(s)$, we see from the direct calculation that 
$\psi$ satisfies Kummer's confluent hypergeometric differential equation
\[
s\psi''(s)+\left(\frac{N-\alpha}{2-\alpha}-s\right)\psi'(s)
-\left(\frac{N-\alpha}{2-\alpha}-\beta\right)\psi(s)=0, \quad s>0.
\]
Therefore Lemma \ref{Kummer} {\bf (i)} yields that 
$\psi(s)$ can be given by 
\[
\psi(s)
=k_1M\left(\frac{N-\alpha}{2-\alpha}-\beta,\frac{N-\alpha}{2-\alpha};s\right)
+k_2U\left(\frac{N-\alpha}{2-\alpha}-\beta,\frac{N-\alpha}{2-\alpha};s\right), \quad s>0.
\]
For some $k_1, k_2\in\R$. 
Since $\psi(s)$ and $M(b,c;s)$ converge to $1$ as $s\to 0$ and $U(b,c;s)$ is unbounded at $s=0$, 
we have $k_2=0$ and then
\[
1=\lim_{s\to 0}\psi(s)=k_1\lim_{s\to 0}M\left(\frac{N-\alpha}{2-\alpha}-\beta,\frac{N-\alpha}{2-\alpha};s\right)=k_1. 
\] 
By the definition of $\psi$, the proof is complete.
\end{proof}

Let us fix the notation of concrete functions which we use later. 

\begin{definition}\label{phi.beta}
For $\beta\in \R$, define 
\[
\varphi_\beta(s)=e^{-s}
M\left(\frac{N-\alpha}{2-\alpha}-\beta,\frac{N-\alpha}{2-\alpha};s\right), \quad s\geq 0.
\]
\end{definition}

\begin{lemma}[Properties of $\varphi_\beta$]\label{phi.beta.lem}
The following assertions hold\/$:$
\begin{itemize}
\item[{\bf (i)}] For every $\beta\in \R$, 
\begin{equation}\label{eq:phi.beta}
s\varphi_\beta''(s)+
\left(\frac{N-\alpha}{2-\alpha}+s\right)\varphi_\beta'(s)
+\beta\varphi_\beta(s)=0, \quad s>0.
\end{equation}
\item[{\bf (ii)}] For every $\beta\in \R$, 
\[
\beta\varphi_\beta(s)+s\varphi_\beta'(s)=\beta\varphi_{\beta+1}(s), \quad s>0.
\]
\item[{\bf (iii)}] For every $\beta\in \R$, there exists $C_\beta>0$ such that 
\[
|\varphi_\beta(s)|\leq C_\beta(1+s)^{-\beta},
\quad s\geq0.
\]
\item[{\bf (iv)}] For every $\beta<\frac{N-\alpha}{2-\alpha}$, there exists $c_\beta>0$ such that 
\[
\varphi_\beta(s)\geq c_\beta(1+s)^{-\beta},
\quad s\geq 0.
\]
\end{itemize}
\end{lemma}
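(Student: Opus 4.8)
The plan is to dispatch the four assertions in turn, reducing each to analytic facts about $M(\cdot,\cdot;\cdot)$ recorded in Lemma \ref{Kummer}. Throughout I write $\nu=\frac{N-\alpha}{2-\alpha}$, so that $\varphi_\beta(s)=e^{-s}M(\nu-\beta,\nu;s)$, and I note that $N\ge 2$ and $\alpha\in[0,1)$ force $\nu>0$. For \textbf{(i)} it suffices to run the computation from the proof of Lemma \ref{varphi} in reverse. Setting $\psi(s)=e^{s}\varphi_\beta(s)=M(\nu-\beta,\nu;s)$, the substitution $\varphi_\beta=e^{-s}\psi$ turns the left-hand side of \eqref{eq:phi.beta} into $e^{-s}\big[s\psi''+(\nu-s)\psi'-(\nu-\beta)\psi\big]$, which vanishes because $M(\nu-\beta,\nu;s)$ solves Kummer's equation. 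Thus \eqref{eq:phi.beta} holds for every $\beta\in\R$; this is just the existence direction already present in Lemma \ref{varphi}.

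For \textbf{(ii)} I would argue by uniqueness rather than by manipulating power series. Introduce the Euler operator $\theta=s\frac{d}{ds}$; multiplying \eqref{eq:phi.beta} by $s$ recasts it as $\theta(\theta+\nu-1)\varphi_\beta=-s(\theta+\beta)\varphi_\beta$. Put $g=(\theta+\beta)\varphi_\beta=\beta\varphi_\beta+s\varphi_\beta'$, the left-hand side of the claimed identity. Since any two polynomials in the single operator $\theta$ commute and since $\theta(sg)=s(\theta+1)g$, a short computation gives $\theta(\theta+\nu-1)g=(\theta+\beta)\big(\theta(\theta+\nu-1)\varphi_\beta\big)=-(\theta+\beta)(sg)=-s(\theta+\beta+1)g$, which is exactly the $s$-multiplied form of \eqref{eq:phi.beta} with $\beta$ replaced by $\beta+1$. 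Hence $g$ and $\beta\varphi_{\beta+1}$ satisfy the same equation; both are analytic at the regular singular point $s=0$ with $g(0)=\beta=\beta\varphi_{\beta+1}(0)$, and among solutions that stay bounded near $0$ the value at $0$ pins down the solution, so $g=\beta\varphi_{\beta+1}$. (Alternatively one may simply quote a contiguous relation for $M$.)

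For \textbf{(iii)} and \textbf{(iv)} the engine is the large-$s$ asymptotics of $M$ from Lemma \ref{Kummer}. When $\nu-\beta\notin\{0,-1,-2,\dots\}$ one has $M(\nu-\beta,\nu;s)=\frac{\Gamma(\nu)}{\Gamma(\nu-\beta)}e^{s}s^{-\beta}\big(1+O(s^{-1})\big)$, so $s^{\beta}\varphi_\beta(s)\to\frac{\Gamma(\nu)}{\Gamma(\nu-\beta)}$ as $s\to\infty$; when $\nu-\beta$ is a nonpositive integer, $M$ degenerates to a polynomial and $\varphi_\beta=e^{-s}\times(\text{polynomial})$ decays faster than any power. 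In either case $s\mapsto(1+s)^{\beta}|\varphi_\beta(s)|$ is continuous on $[0,\infty)$ (with value $1$ at $s=0$) and has a finite limit at infinity, hence is bounded, which gives (iii). For (iv), the hypothesis $\beta<\nu$ forces $\nu-\beta>0$, so every coefficient of the series $M(\nu-\beta,\nu;s)=\sum_{n}\frac{(\nu-\beta)_n}{(\nu)_n}\frac{s^n}{n!}$ is positive and therefore $\varphi_\beta(s)>0$ on $[0,\infty)$, while the limit constant $\frac{\Gamma(\nu)}{\Gamma(\nu-\beta)}$ is strictly positive. Thus $s\mapsto(1+s)^{\beta}\varphi_\beta(s)$ is continuous, strictly positive, and tends to a positive limit, whence it is bounded below by some $c_\beta>0$, which is (iv).

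The routine parts are (i) and the boundedness–positivity bookkeeping in (iii)--(iv). The step needing the most care is (ii): the cleanest route is the operator identity above combined with uniqueness of solutions regular at the singular point $s=0$, the same uniqueness principle already exploited in Lemma \ref{varphi}. The genuine analytic input, on which (iii) and (iv) ultimately rest, is the asymptotic expansion of $M$ taken from Lemma \ref{Kummer}; the only subtlety there is to treat separately the degenerate case in which the first Kummer parameter $\nu-\beta$ is a nonpositive integer, where $M$ reduces to a polynomial and $\varphi_\beta$ decays exponentially.
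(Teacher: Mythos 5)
Your proof is correct and follows essentially the same strategy as the paper's: (i) is read off from Kummer's equation via $\psi=e^{s}\varphi_\beta$; (ii) is proved by showing that $g=\beta\varphi_\beta+s\varphi_\beta'$ solves \eqref{eq:varphi} with $\beta$ replaced by $\beta+1$ and then invoking the uniqueness-at-the-origin principle of Lemma \ref{varphi}; (iii) and (iv) come from the large-$s$ asymptotics of $M$ together with continuity and positivity of the Kummer series. The differences are in execution, and both are to your credit. For (ii), the paper differentiates $\widetilde{\varphi}=s\varphi_\beta'+\beta\varphi_\beta$ twice and verifies the shifted ODE by a chain of substitutions, whereas you get it in one line from the factorization $\theta(\theta+\nu-1)\varphi_\beta=-s(\theta+\beta)\varphi_\beta$ with $\theta=s\frac{d}{ds}$, commutativity of polynomials in $\theta$, and the identity $\theta(sg)=s(\theta+1)g$ (all of which check out); the underlying idea — shifted equation plus uniqueness among solutions bounded at the regular singular point — is identical. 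For (iii), you are actually more careful than the paper: the paper cites Lemma \ref{Kummer} {\bf (ii)}, which is stated only for $c>b>0$, i.e. $0<\beta<\frac{N-\alpha}{2-\alpha}$, even though the assertion is claimed for every $\beta\in\R$; your argument covers the remaining parameter range by splitting off the degenerate case where $\frac{N-\alpha}{2-\alpha}-\beta$ is a nonpositive integer (so $M$ is a polynomial and $\varphi_\beta$ decays exponentially) and using the general asymptotic expansion otherwise, thereby filling a small gap the paper leaves implicit.
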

\begin{proof}
{\bf (i)}\ The assertion is directly verified by 
Definition \ref{phi.beta} and Lemma \ref{varphi}.

\medskip 

\noindent{\bf (ii)}\ Set $\widetilde{\varphi}(s):=s\varphi_\beta'(s)+\beta\varphi_\beta(s)$ for $s\geq 0$.
Then we see from \eqref{eq:phi.beta} that 
\begin{align*}
\widetilde{\varphi}'(s)
&=s\varphi_\beta''(s)+(\beta+1)\varphi_\beta'(s)
\\
&=
-\left(\frac{N-\alpha}{2-\alpha}+s\right)\varphi_\beta'(s)
-\beta\varphi_\beta(s)
+(\beta+1)\varphi_\beta'(s)
\\
&=
-\left(\frac{N-\alpha}{2-\alpha}-\beta-1\right)\varphi_\beta'(s)
-\widetilde{\varphi}(s)
\end{align*}
and therefore
\begin{align*}
s\widetilde{\varphi}''(s)
+s\widetilde{\varphi}'(s)
&=
-\left(\frac{N-\alpha}{2-\alpha}-\beta-1\right)s\varphi_\beta''(s)
\\
&=
\left(\frac{N-\alpha}{2-\alpha}-\beta-1\right)
\left(\left(\frac{N-\alpha}{2-\alpha}+s\right)\varphi_\beta'(s)+\beta\varphi_\beta(s)\right)
\\
&=
\frac{N-\alpha}{2-\alpha}
\left(\frac{N-\alpha}{2-\alpha}-\beta-1\right)
\varphi_\beta'(s)
+\left(\frac{N-\alpha}{2-\alpha}-\beta-1\right)\widetilde{\varphi}(s)
\\
&=
-\frac{N-\alpha}{2-\alpha}
\left(\widetilde{\varphi}'(s)+\widetilde{\varphi}(s)\right)
+\left(\frac{N-\alpha}{2-\alpha}-\beta-1\right)\widetilde{\varphi}(s)
\\
&=
-\frac{N-\alpha}{2-\alpha}\widetilde{\varphi}'(s)
-(\beta+1)\widetilde{\varphi}(s).
\end{align*}
This means that $\widetilde{\varphi}_{\beta}$ satisfies 
with \eqref{eq:varphi} with $\beta$ replaced with $\beta+1$. 
Noting that $\varphi_{\beta}'(0)=
1-\frac{(2-\alpha)\beta}{N-\alpha}$, we have
$\lim_{s\to 0}\widetilde{\varphi}(s)=\beta$ and therefore 
by Lemma \ref{varphi} we deduce $\widetilde{\varphi}(s)=\beta\varphi_{\beta+1}(s)$ for all $s\geq 0$. 

\medskip 

\noindent{\bf (iii)}.\ By Lemma \ref{Kummer} {\bf (ii)}, we see that 
there exists $R_\beta>0$ such that 
for every $s\geq R_\beta$, 
\[
\frac{1}{2}\,\frac{\Gamma(\frac{N-\alpha}{2-\alpha})}{\Gamma (\frac{N-\alpha}{2-\alpha}-\beta)}
\leq \frac{M(\frac{N-\alpha}{2-\alpha}-\beta,\frac{N-\alpha}{2-\alpha};s)}{s^{-\beta}e^s}
\leq 
\frac{3}{2}\frac{\Gamma(\frac{N-\alpha}{2-\alpha})}{\Gamma (\frac{N-\alpha}{2-\alpha}-\beta)},
\]
or equivalently, for every $s\geq R_\beta$, 
\begin{equation}\label{up.low}
\frac{1}{2}\,\frac{\Gamma(\frac{N-\alpha}{2-\alpha})}{\Gamma (\frac{N-\alpha}{2-\alpha}-\beta)}s^{-\beta}
\leq \varphi_\beta(s)
\leq 
\frac{3}{2}\frac{\Gamma(\frac{N-\alpha}{2-\alpha})}{\Gamma (\frac{N-\alpha}{2-\alpha}-\beta)}s^{-\beta}.
\end{equation}
Since $\varphi_\beta$ is continuous in $[0,R_\beta]$, we verify the assertion. 

\medskip 

\noindent{\bf (iv)} 
Since $\beta<\frac{N-\alpha}{2-\alpha}$ is satisfied,
it follows from 
the definition of $M(b,c;s)$ in Appendix (see page \pageref{def.kummer})
that 
$\varphi_\beta(s)>0$ 
for all $s\geq 0$. From \eqref{up.low} and the positivity of $\varphi_\beta$, 
we have also  lower bounds with the same power $s^{-\beta}$.
\end{proof}

\subsection{Construction of weight functions via $\varphi_\beta$}

Here we define the suitable weight functions for 
weighted energy estimates for solutions of \eqref{dw}. 

\begin{definition}\label{Phi.beta}
For $\beta\in \R$, define 
\[
\Phi_\beta(x,t)=
t^{-\beta}\varphi_\beta\left(\frac{|x|^{2-\alpha}}{(2-\alpha)^2t}\right), \quad x\in \R^N, \ t>0.
\]
\end{definition}

To state the properties of $\Phi_\beta$, we also introduce 
\[
\Psi^{\beta}(x,t)=
\left(t+\frac{|x|^{2-\alpha}}{(2-\alpha)^2}\right)^{\beta}, \quad x\in \R^N, \quad t>0.
\]
\begin{lemma}[Properties of $\Phi_\beta$]
\label{Phi.beta.lem}
The following assertions hold\/$:$
\begin{itemize}
\item[{\bf (i)}] For every $\beta\in \R$, 
\[
\pa_t\Phi_\beta(x,t)=|x|^{\alpha}\Delta\Phi_\beta(x,t), 
\quad x\in \R^N, \ t>0.
\]
\item[{\bf (ii)}] For every $\beta\in \R$, 
\[
\pa_t\Phi_\beta(x,t)=-\beta\Phi_{\beta+1}(x,t)
\quad x\in \R^N, \ t>0.
\]
\item[{\bf (iii)}] For every $\beta\in \R$, 
\[
|\Phi_\beta(x,t)|\leq 
C_\beta\Psi^{-\beta}(x,t),
\quad x\in \R^N, \ t>0.
\]
\item[{\bf (iv)}] For every $\beta<\frac{N-\alpha}{2-\alpha}$, 
\[
\Phi_\beta(x,t)\geq 
c_\beta\Psi^{-\beta}(x,t),
\quad x\in \R^N, \ t>0.
\]
\end{itemize}
\end{lemma}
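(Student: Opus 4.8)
The plan is to derive all four assertions from the ordinary differential properties of $\varphi_\beta$ established in Lemma \ref{phi.beta.lem}, together with the self-similar structure of $\Phi_\beta$, so that essentially no fresh partial differential computation is required. Throughout I write $s=\frac{|x|^{2-\alpha}}{(2-\alpha)^2t}$ for the self-similar variable, so that $\Phi_\beta(x,t)=t^{-\beta}\varphi_\beta(s)$.

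For \textbf{(i)} I would match $\Phi_\beta$ against the family characterized in Lemma \ref{self.sim}. The self-similar form \eqref{sol.form} is $t^{\widetilde{\beta}/(2-\alpha)}\varphi(s)$, so taking $\widetilde{\beta}=-(2-\alpha)\beta$ identifies the time exponent with $-\beta$. Under this choice the profile equation \eqref{eq:varphi1} becomes precisely \eqref{eq:phi.beta}, which $\varphi_\beta$ satisfies by Lemma \ref{phi.beta.lem}\,\textbf{(i)}. Hence $\Phi_\beta$ falls into the class covered by Lemma \ref{self.sim} and therefore solves \eqref{heat}, giving \textbf{(i)} at once.

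For \textbf{(ii)} I would compute $\partial_t\Phi_\beta$ by the chain rule. Since $\partial_t s=-s/t$, one obtains $\partial_t\Phi_\beta=-t^{-\beta-1}\bigl(\beta\varphi_\beta(s)+s\varphi_\beta'(s)\bigr)$, and the bracketed expression equals $\beta\varphi_{\beta+1}(s)$ by Lemma \ref{phi.beta.lem}\,\textbf{(ii)}; recognizing $t^{-\beta-1}\varphi_{\beta+1}(s)=\Phi_{\beta+1}(x,t)$ yields $\partial_t\Phi_\beta=-\beta\Phi_{\beta+1}$.

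For \textbf{(iii)} and \textbf{(iv)}, the single algebraic observation needed is $t(1+s)=t+\frac{|x|^{2-\alpha}}{(2-\alpha)^2}=\Psi^{1}(x,t)$, whence $\Psi^{-\beta}(x,t)=t^{-\beta}(1+s)^{-\beta}$. Since $t^{-\beta}>0$ for $t>0$, the pointwise bounds $|\varphi_\beta(s)|\leq C_\beta(1+s)^{-\beta}$ and, for $\beta<\frac{N-\alpha}{2-\alpha}$, $\varphi_\beta(s)\geq c_\beta(1+s)^{-\beta}$ from Lemma \ref{phi.beta.lem}\,\textbf{(iii)}--\textbf{(iv)} transfer directly to $|\Phi_\beta|\leq C_\beta\Psi^{-\beta}$ and $\Phi_\beta\geq c_\beta\Psi^{-\beta}$. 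I do not expect a genuine obstacle here; the only points needing care are fixing the scaling exponent $\widetilde{\beta}=-(2-\alpha)\beta$ so that \textbf{(i)} is inherited from Lemma \ref{self.sim} rather than reproved through a direct differentiation, and noting that the positivity of the factor $t^{-\beta}$ is what allows the absolute value in \textbf{(iii)} and the lower bound in \textbf{(iv)} to pass through the time weight unchanged.
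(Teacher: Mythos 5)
Your proposal is correct and follows essentially the same route as the paper: part \textbf{(i)} via Lemma \ref{self.sim} (with the scaling exponent $\widetilde{\beta}=-(2-\alpha)\beta$) together with Lemma \ref{phi.beta.lem}\,\textbf{(i)}, part \textbf{(ii)} by the chain-rule computation combined with the recursion $\beta\varphi_\beta+s\varphi_\beta'=\beta\varphi_{\beta+1}$, and parts \textbf{(iii)}--\textbf{(iv)} by transferring the bounds on $\varphi_\beta$ through the identity $\Psi^{1}(x,t)=t(1+s)$. The only cosmetic difference is that you make explicit the factorization $\Psi^{-\beta}=t^{-\beta}(1+s)^{-\beta}$, which the paper leaves implicit.
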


\begin{proof}
{\bf (iii)} and {\bf (iv)} directly follow from 
the corresponding assertions in Lemma \ref{phi.beta.lem}. 
{\bf (i)} is a consequence of Lemmas \ref{self.sim}, \ref{varphi} 
and Lemma \ref{phi.beta.lem} {\bf (i)}. 
The equality in {\bf (ii)} can be proved by Lemma \ref{phi.beta.lem} {\bf (ii)} as follows:
\begin{align*}
\pa_t\Phi_\beta(x,t)
&=
-\beta t^{-\beta-1}\varphi_\beta\left(\frac{|x|^{2-\alpha}}{(2-\alpha)^2t}\right)
-t^{-\beta}\varphi_\beta'\left(\frac{|x|^{2-\alpha}}{(2-\alpha)^2t}\right)
\frac{|x|^{2-\alpha}}{(2-\alpha)^2t^2}
\\
&=
-
t^{-\beta-1}\left[
\beta \varphi_\beta\left(\frac{|x|^{2-\alpha}}{(2-\alpha)^2t}\right)
+\varphi_\beta'\left(\frac{|x|^{2-\alpha}}{(2-\alpha)^2t}\right)
\frac{|x|^{2-\alpha}}{(2-\alpha)^2t}\right]
\\
&=
-
\beta t^{-\beta-1}
\varphi_{\beta+1}\left(\frac{|x|^{2-\alpha}}{(2-\alpha)^2t}\right)
\\
&=
-
\beta \Phi_{\beta+1}(x,t).
\end{align*}
The proof is complete.
\end{proof}

\begin{remark}
Since the asymptotic behavior of $\varphi_{\beta}(s)$ at $s=\infty$ is explicitly given 
via Lemma \ref{Kummer} {\bf (ii)}, 
we can deduce that for every $x\in \R^N\setminus\{0\}$, 
\[
\lim_{t\downarrow 0}\Phi_\beta(x,t)=\frac{\Gamma(\frac{N-\alpha}{2-\alpha})}{\Gamma(\frac{N-\alpha}{2-\alpha}-\beta)}|x|^{(2-\alpha)\beta}
\]
This means that $\Phi_\beta$ is a solution of \eqref{heat} with 
initial value $\frac{\Gamma(\frac{N-\alpha}{2-\alpha})}{\Gamma(\frac{N-\alpha}{2-\alpha}-\beta)}|x|^{(2-\alpha)\beta}$.
\end{remark}

\begin{remark}\label{rem.equiv}
It follows from Lemma \ref{Phi.beta.lem} {\bf (iii)} and {\bf (iv)} that 
if $\beta<\frac{N-\alpha}{2-\alpha}$, then $\Phi_\beta^{-1}$ and $\Psi_\beta$ 
are equivalent in the sense of weighted functions:
\[
c_\beta^{-1}\Psi^{\beta}(x,t)
\leq 
\Phi_\beta(x,t)^{-1}
\leq 
C_\beta^{-1}\Psi^{\beta}(x,t).
\]
\end{remark}

\subsection{Semigroup generated by $|x|^{\alpha}\Delta$ with Dirichlet boundary condition}

Here we collect the statements of the properties of the semigroup 
generated by $|x|^{\alpha}\Delta$ with Dirichlet boundary condition 
in a weighted $L^2$-space $L^2_{d\mu}$.

First we introduce 
the bilinear form 
\begin{align*}
\begin{cases}
   \mathfrak{a}(u,v)
:=
   \displaystyle
   \int_{\Omega}
      \nabla u(x)\cdot \nabla v(x)
   \,dx, 
\\[8pt]
   D(\mathfrak{a})
:=
   \Big\{
      u\in C_c^\infty(\overline{\Omega})
   \;;\;
      u(x)=0\quad\forall x\in \pa\Omega
   \Big\}
\end{cases}
\end{align*}
in a Hilbert space $L^2_{d\mu}$. 
Then the form $\mathfrak{a}$ is closable, and therefore, 
we denote $\mathfrak{a}_*$ as a closure of $\mathfrak{a}$. 
Then we remark the following four lemmas stated in \cite{So_Wa1}.
\begin{lemma}[{\cite[Lemma 2.1]{So_Wa1}}]
\label{domain.form}
The bilinear form $\mathfrak{a}_*$ can be characterized as follows:
\begin{align}\label{eq:form.dom}
   &D(\mathfrak{a}_*)
=
   \left\{
      u\in L^2_{d\mu}\cap \dot{H}^1(\Omega)
   ;
      \int_{\Omega}
         \frac{\pa u}{\pa x_j}\varphi
      \,dx
   =
      -\int_{\Omega}
         u\frac{\pa \varphi}{\pa x_j}
      \,dx
   \;\;
      \forall \varphi\in C_c^\infty(\R^N)
   \right\},
\\
\label{eq:form}
&\mathfrak{a}_*(u,v)
=
   \int_{\Omega}
      \nabla u(x)\cdot \nabla v(x)
   \,dx.
\end{align}
\end{lemma}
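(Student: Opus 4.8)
The plan is to prove the statement in three stages: first establish closability of $\mathfrak{a}$, then prove the two inclusions between $D(\mathfrak{a}_*)$ and the set on the right-hand side of \eqref{eq:form.dom}, which I denote by $V$. \emph{Closability.} Since $\mathfrak{a}$ is symmetric and nonnegative, it suffices to verify the standard criterion: if $(u_n)\subset D(\mathfrak{a})$ satisfies $u_n\to 0$ in $L^2_{d\mu}$ and $(\nabla u_n)$ is Cauchy in $L^2(\Omega)$, then $\nabla u_n\to 0$ in $L^2(\Omega)$. Let $g\in L^2(\Omega)^N$ be the limit of $\nabla u_n$. The key observation is that $0\notin\overline{\Omega}$, so $|x|^{-\alpha}$ is bounded below by a positive constant on every compact subset of $\overline{\Omega}$; consequently $u_n\to 0$ in $L^2_{d\mu}$ forces $u_n\to 0$ in $L^2_{\rm loc}(\overline{\Omega})$. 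Testing the identity $\int_\Omega\nabla u_n\cdot\vec{\varphi}\,dx=-\int_\Omega u_n(\nabla\cdot\vec{\varphi})\,dx$ against an arbitrary $\vec{\varphi}\in C_c^\infty(\Omega)^N$ and letting $n\to\infty$ yields $\int_\Omega g\cdot\vec{\varphi}\,dx=0$, hence $g=0$. Thus $\mathfrak{a}$ is closable and $\mathfrak{a}_*$ is well defined; by construction $D(\mathfrak{a}_*)$ is the completion of $D(\mathfrak{a})$ in the form norm $\|u\|_{\mathfrak{a}}^2=\int_\Omega|\nabla u|^2\,dx+\|u\|_{L^2_{d\mu}}^2$, and \eqref{eq:form} holds on $D(\mathfrak{a}_*)$ by continuity.

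\emph{The inclusion $D(\mathfrak{a}_*)\subseteq V$.} Let $u\in D(\mathfrak{a}_*)$ and pick $u_n\in D(\mathfrak{a})$ with $u_n\to u$ in $L^2_{d\mu}$ and $\nabla u_n\to\nabla u$ in $L^2(\Omega)$, the limit being the $\dot{H}^1$-gradient as identified in the closability step; in particular $u\in L^2_{d\mu}\cap\dot{H}^1(\Omega)$. Each $u_n\in C_c^\infty(\overline{\Omega})$ vanishes on $\pa\Omega$, so Green's formula, with no boundary term, gives $\int_\Omega\pa_j u_n\,\varphi\,dx=-\int_\Omega u_n\,\pa_j\varphi\,dx$ for every $\varphi\in C_c^\infty(\R^N)$. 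Since $u_n\to u$ in $L^2_{\rm loc}(\overline{\Omega})$ and $\nabla u_n\to\nabla u$ in $L^2$, passing to the limit produces exactly the defining identity of $V$, so $u\in V$.

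\emph{The inclusion $V\subseteq D(\mathfrak{a}_*)$.} This is the main obstacle, and I would handle it by a two-step approximation. Given $u\in V$, first cut off near spatial infinity: with $\chi_R$ a smooth cutoff equal to $1$ on $\{|x|\leq R\}$ and supported in $\{|x|\leq 2R\}$ with $|\nabla\chi_R|\lesssim R^{-1}$, the error $\nabla(\chi_R u)-\chi_R\nabla u=u\nabla\chi_R$ is controlled by $R^{-2}\int_{R\leq|x|\leq 2R}|u|^2\,dx\lesssim R^{\alpha-2}\int_{R\leq|x|\leq 2R}|u|^2|x|^{-\alpha}\,dx$, which tends to $0$ as $R\to\infty$ precisely because $\alpha<2$ and $u\in L^2_{d\mu}$. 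Hence $\chi_R u\to u$ in the form norm, reducing the problem to $u\in V$ with compact support in $\overline{\Omega}$. On such a support, which is compact and avoids the origin, $|x|^{-\alpha}$ is bounded above and below by positive constants, so the form norm is equivalent there to the ordinary $H^1$-norm. The defining identity of $V$ states exactly that the zero-extension of $u$ has $L^2$ weak gradient equal to the zero-extension of $\nabla u$, which for the smooth bounded boundary $\pa\Omega$ is the classical characterization of $H^1_0(\Omega)$; therefore $u$ is approximable in $H^1$, and hence in the form norm, by functions in $C_c^\infty(\Omega)\subset D(\mathfrak{a})$, placing $u\in D(\mathfrak{a}_*)$. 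I expect the delicate point to be this boundary step, namely justifying that the zero-extension condition is equivalent to membership in $H^1_0(\Omega)$ together with the density of $C_c^\infty(\Omega)$; for smooth $\pa\Omega$ it follows from standard Sobolev theory, whereas the interplay of the weight near infinity is handled cleanly by the restriction $\alpha<1<2$.
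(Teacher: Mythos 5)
Your proof is correct, but note that there is no in-paper argument to compare against: Lemma \ref{domain.form} is stated by citation to \cite{So_Wa1}, so your proposal supplies a self-contained proof of a result the paper merely quotes. Your route --- closability of $\mathfrak{a}$ via the standard criterion, the inclusion $D(\mathfrak{a}_*)\subseteq V$ (where $V$ denotes the right-hand side of \eqref{eq:form.dom}) by passing to the limit in Green's formula, and $V\subseteq D(\mathfrak{a}_*)$ by a cutoff at spatial infinity followed by the zero-extension characterization of $H^1_0$ near the compact smooth boundary --- is the natural one and is sound; in particular the exponent count $R^{\alpha-2}\to 0$ is exactly where $\alpha<2$ and $u\in L^2_{d\mu}$ enter. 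Two points should be made explicit to close the argument fully. First, after the cutoff you must check that $\chi_R u$ still lies in $V$ before the boundary step can be applied to it; this does hold: apply the defining identity of $V$ with the test function $\chi_R\varphi\in C_c^\infty(\R^N)$ and use the product rule for weak derivatives. Second, the approximating sequence $\phi_n\in C_c^\infty(\Omega)$ furnished by $u\in H^1_0(\Omega)$ need not be supported near $\mathrm{supp}\,u$, so the ``local equivalence of norms'' is not quite what you use; what you actually need is only the one-sided bound $\|\cdot\|_{L^2_{d\mu}}\leq \delta^{-\alpha/2}\|\cdot\|_{L^2(\Omega)}$, valid globally on $\overline{\Omega}$ because $0\notin\overline{\Omega}$ forces $|x|\geq\delta>0$ there; with that, $H^1$-convergence implies form-norm convergence, and $u\in D(\mathfrak{a}_*)$ follows since a closed form has a complete form domain (the lower bound of the weight is used only once, to see that a compactly supported element of $L^2_{d\mu}\cap\dot H^1$ lies in $H^1(\Omega)$). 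The remaining ingredient --- that a compactly supported $u\in H^1(\Omega)$ whose zero extension lies in $H^1(\R^N)$ belongs to $H^1_0(\Omega)$ --- is indeed classical for $C^1$ (even Lipschitz) boundaries, localized near the compact set $\pa\Omega$, as you anticipate.
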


\begin{lemma}[{\cite[Lemma 2.2]{So_Wa1}}]\label{L*}
The operator $-L_*$ in $L^2_{d\mu}$ defined by 
\begin{align*}
   D(L_*)
	&:=
   \Big\{
      u\in D(\mathfrak{a}_*)
   \;;\; 
      \exists f\in L^2_{d\mu}\text{\ s.t.\ }
            \mathfrak{a}_*(u,v)
         =
            (f,v)_{L^2_{d\mu}}
         \quad
            \forall v\in D(\mathfrak{a}_*)
   \Big\},\\
   -L_*u &:=f
\end{align*}
is nonnegative and selfadjoint in $L^2_{d\mu}$. 
Therefore $L_*$ generates an analytic semigroup $e^{tL_*}$ on $L^2_{d\mu}$ 
and satisfies
\[
   \|e^{tL_*}f\|_{L^2_{d\mu}}
\leq 
   \|f\|_{L^2_{d\mu}}, 
\quad 
   \|L_*e^{tL_*}f\|_{L^2_{d\mu}}
\leq 
   \frac{1}{t}\|f\|_{L^2_{d\mu}}, 
\quad 
   \forall\ f\in L^2_{d\mu}.
\]
Furthermore, $L_{*}$ is an extension of $L$ 
defined on $C_c^\infty(\overline{\Omega})$ 
with Dirichlet boundary condition.
\end{lemma}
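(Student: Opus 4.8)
The plan is to recognize the statement as a direct application of the first representation theorem for densely defined, closed, symmetric, nonnegative sesquilinear forms. By the preceding Lemma \ref{domain.form}, the form $\mathfrak{a}_*$ is the closure of $\mathfrak{a}$ and is therefore closed; it is dense in $L^2_{d\mu}$ because $D(\mathfrak{a}_*)\supset D(\mathfrak{a})=\{u\in C_c^\infty(\overline{\Omega}); u|_{\partial\Omega}=0\}$, which is dense in $L^2_{d\mu}$. Since $\mathfrak{a}_*(u,u)=\int_{\Omega}|\nabla u|^2\,dx\geq 0$, the form is symmetric and nonnegative. The representation theorem then produces a unique nonnegative self-adjoint operator with domain exactly $\{u\in D(\mathfrak{a}_*); \exists f\in L^2_{d\mu},\ \mathfrak{a}_*(u,v)=(f,v)_{L^2_{d\mu}}\ \forall v\in D(\mathfrak{a}_*)\}$ and action $u\mapsto f$; this is precisely the operator $-L_*$, which is therefore nonnegative and self-adjoint (and coincides with the Friedrichs extension of $-L_{\min}$ announced in the introduction).

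Next I would deduce the semigroup bounds from the spectral theorem. Because $-L_*$ is nonnegative self-adjoint, its spectrum lies in $[0,\infty)$, so $L_*$ generates an analytic semigroup. Writing the functional calculus against the spectral measure $E(\lambda)$ of $-L_*$, for $f\in L^2_{d\mu}$ one has $\|e^{tL_*}f\|_{L^2_{d\mu}}^2=\int_0^\infty e^{-2t\lambda}\,d\|E(\lambda)f\|_{L^2_{d\mu}}^2\leq\|f\|_{L^2_{d\mu}}^2$ since $e^{-2t\lambda}\leq 1$, and $\|L_*e^{tL_*}f\|_{L^2_{d\mu}}^2=\int_0^\infty\lambda^2e^{-2t\lambda}\,d\|E(\lambda)f\|_{L^2_{d\mu}}^2\leq\big(\sup_{\lambda\geq0}\lambda e^{-t\lambda}\big)^2\|f\|_{L^2_{d\mu}}^2$. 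The elementary bound $\sup_{\lambda\geq0}\lambda e^{-t\lambda}=(et)^{-1}\leq t^{-1}$ then gives the second estimate.

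Finally, to show $L_*$ extends $L_{\min}$, I would take $u\in C_c^\infty(\overline{\Omega})$ with $u|_{\partial\Omega}=0$, so $u\in D(\mathfrak{a})\subset D(\mathfrak{a}_*)$. For $v\in D(\mathfrak{a}_*)$ I would approximate $v$ in the form norm by $v_n\in D(\mathfrak{a})$; as each $v_n$ is supported away from $\partial\Omega$, integration by parts produces no boundary term, so $\mathfrak{a}_*(u,v_n)=\int_{\Omega}\nabla u\cdot\nabla v_n\,dx=\int_{\Omega}(-\Delta u)v_n\,dx=(|x|^\alpha(-\Delta u),v_n)_{L^2_{d\mu}}$. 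Passing to the limit and noting $|x|^\alpha(-\Delta u)\in L^2_{d\mu}$, since its squared norm equals $\int_{\Omega}|\Delta u|^2|x|^{\alpha}\,dx<\infty$ (as $\Delta u$ has compact support and $0\notin\overline{\Omega}$), shows $u\in D(L_*)$ with $-L_*u=-|x|^\alpha\Delta u$, i.e.\ $L_*u=|x|^\alpha\Delta u=L_{\min}u$.

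The bulk of the difficulty actually resides in the cited Lemma \ref{domain.form} (closability of $\mathfrak{a}$ and the explicit trace-zero description of $D(\mathfrak{a}_*)$), upon which the representation theorem rests; granting that, the arguments above are routine. The one step that genuinely requires care is the vanishing of the boundary term in the extension claim, which is legitimate precisely because the zero-extension condition encoded in $D(\mathfrak{a}_*)$ (as in Lemma \ref{domain.form}) permits approximation of an arbitrary $v\in D(\mathfrak{a}_*)$ by functions supported away from $\partial\Omega$.
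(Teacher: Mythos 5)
Your proposal is correct and follows essentially the same route as the source the paper cites for this lemma (the paper itself gives no proof, only the reference to \cite[Lemma 2.2]{So_Wa1}): the operator associated to the densely defined, closed, symmetric, nonnegative form $\mathfrak{a}_*$ by the representation theorem is precisely the Friedrichs extension announced in the introduction, the two semigroup bounds follow from the spectral calculus exactly as you write, and the extension property follows by integration by parts against approximating elements of $D(\mathfrak{a})$. One small correction: functions $v_n\in D(\mathfrak{a})\subset C_c^\infty(\overline{\Omega})$ need \emph{not} be supported away from $\partial\Omega$ --- they merely vanish \emph{on} $\partial\Omega$ --- but that is already enough to kill the boundary term $\int_{\partial\Omega}(\partial_\nu u)\,v_n\,dS$ in Green's formula, so your argument goes through unchanged.
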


\begin{lemma}[{\cite[Lemma 2.3]{So_Wa1}}]
\label{domain.op}
We have
\[
\big\{
  u\in H^2(\Omega)\cap H_0^1(\Omega)\;;\;a(x)^{-\frac{1}{2}}\Delta u\in L^2(\Omega)
  \big\}\subset D(L_*)
\] 
and its inclusion is continuous. 
\end{lemma}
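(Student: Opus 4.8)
The plan is to verify the two defining requirements of $D(L_*)$ from Lemma \ref{L*} with the explicit candidate $f := -a(x)^{-1}\Delta u = -|x|^{\alpha}\Delta u$ for $-L_*u$. The hypothesis is exactly tuned to this choice: since $\|f\|_{L^2_{d\mu}}^2 = \int_\Omega |a(x)^{-1}\Delta u|^2\,a(x)\,dx = \int_\Omega a(x)^{-1}|\Delta u|^2\,dx = \|a(x)^{-1/2}\Delta u\|_{L^2(\Omega)}^2$, the assumption $a(x)^{-1/2}\Delta u\in L^2(\Omega)$ says precisely that $f\in L^2_{d\mu}$. First I would check $u\in D(\mathfrak{a}_*)$ through the characterization \eqref{eq:form.dom}. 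Membership $u\in L^2_{d\mu}$ is immediate because $0\notin\overline\Omega$ forces $R_0:=\mathrm{dist}(0,\Omega)>0$, whence $a(x)=|x|^{-\alpha}\leq R_0^{-\alpha}$ on $\Omega$ and $\|u\|_{L^2_{d\mu}}\leq R_0^{-\alpha/2}\|u\|_{L^2(\Omega)}$; moreover $u\in\dot{H}^1(\Omega)$ and the weak-derivative identity in \eqref{eq:form.dom} holds because $u\in H^1_0(\Omega)$, so its extension by zero lies in $H^1(\R^N)$.

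The core step is to prove $\mathfrak{a}_*(u,v)=(f,v)_{L^2_{d\mu}}$ for every $v\in D(\mathfrak{a}_*)$. I would first establish it for $v\in D(\mathfrak{a})$, that is, $v\in C_c^\infty(\overline\Omega)$ with $v|_{\pa\Omega}=0$. For such $v$, Green's formula on $\Omega$ gives $\int_\Omega\nabla u\cdot\nabla v\,dx = -\int_\Omega \Delta u\,v\,dx + \int_{\pa\Omega}\frac{\pa u}{\pa n}\,v\,dS$; the boundary integral vanishes since $v=0$ on $\pa\Omega$, and there is no contribution from infinity because $v$ is compactly supported. Hence $\mathfrak{a}_*(u,v)=\int_\Omega\nabla u\cdot\nabla v\,dx=-\int_\Omega\Delta u\,v\,dx=\int_\Omega(-a(x)^{-1}\Delta u)\,v\,a(x)\,dx=(f,v)_{L^2_{d\mu}}$. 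Passing to a general $v\in D(\mathfrak{a}_*)$ is then a density argument: choosing $v_n\in D(\mathfrak{a})$ with $v_n\to v$ in the form norm (so that $v_n\to v$ in $L^2_{d\mu}$ and $\nabla v_n\to\nabla v$ in $L^2(\Omega)$), the left-hand side converges because $\nabla u\in L^2(\Omega)$, while the right-hand side converges because $f\in L^2_{d\mu}$, giving the identity in the limit. Thus $u\in D(L_*)$ with $-L_*u=f$.

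The main obstacle is the justification of the integration by parts on the unbounded exterior domain without imposing any decay on $u$ at spatial infinity. The resolution is structural rather than computational: one only needs the Green identity tested against the compactly supported functions of $D(\mathfrak{a})$, after which the density of $D(\mathfrak{a})$ in $D(\mathfrak{a}_*)$ — built into the definition of the closure $\mathfrak{a}_*$ — transports the identity to all admissible test functions, while the already-verified memberships $u\in L^2_{d\mu}$ and $f\in L^2_{d\mu}$ control the two limits. Finally, continuity of the inclusion follows at once from the explicit formula $-L_*u=-a(x)^{-1}\Delta u$: the graph norm obeys $\|u\|_{L^2_{d\mu}}+\|L_*u\|_{L^2_{d\mu}}\leq R_0^{-\alpha/2}\|u\|_{L^2(\Omega)}+\|a(x)^{-1/2}\Delta u\|_{L^2(\Omega)}$, which is bounded by $\|u\|_{H^2(\Omega)}+\|a(x)^{-1/2}\Delta u\|_{L^2(\Omega)}$, the natural norm on the set appearing on the left of the inclusion.
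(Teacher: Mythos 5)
Your proof is correct: you verify $u\in D(\mathfrak{a}_*)$ through the characterization \eqref{eq:form.dom} (using that the zero extension of an $H^1_0(\Omega)$ function lies in $H^1(\R^N)$ and that $a$ is bounded on $\Omega$ since $0\notin\overline{\Omega}$), establish $\mathfrak{a}_*(u,v)=(-a^{-1}\Delta u,v)_{L^2_{d\mu}}$ for $v$ in the form core $D(\mathfrak{a})$ by Green's formula, extend to all of $D(\mathfrak{a}_*)$ by density of the core in the form norm, and read off the continuity of the inclusion from $\|u\|_{L^2_{d\mu}}+\|L_*u\|_{L^2_{d\mu}}\leq R_0^{-\alpha/2}\|u\|_{L^2(\Omega)}+\|a^{-1/2}\Delta u\|_{L^2(\Omega)}$. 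The present paper does not prove this lemma but imports it from \cite{So_Wa1}, and your argument is essentially the standard one given there, so there is nothing to add.
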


\begin{lemma}[{\cite[Proposition 2.6]{So_Wa1}}]
\label{embedding2}
Let $e^{tL_*}$ be given in Lemma \ref{L*}. 
For every $f\in L^2_{d\mu}$, we have
\begin{equation}
\label{2-infty}
	\|e^{tL_*}f\|_{L^\infty}\leq Ct^{-\frac{N-\alpha}{2(2-\alpha)}}\|f\|_{L^2_{d\mu}}.
\end{equation}
Moreover, for every $f\in L^1_{d\mu}\cap L^2_{d\mu}$, we have
\begin{equation}
\label{1-2}
	\|e^{tL_*}f\|_{L^2_{d\mu}}\leq Ct^{-\frac{N-\alpha}{2(2-\alpha)}}\|f\|_{L^1_{d\mu}}
\end{equation}
and
\begin{align}
\label{1-3}
	\|L_{*} e^{tL_{\ast}}f\|_{L^2_{d\mu}}
		\leq Ct^{-\frac{N-\alpha}{2(2-\alpha)}-1}\|f\|_{L^1_{d\mu}}.
\end{align}
\end{lemma}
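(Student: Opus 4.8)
The plan is to exploit that, by Lemma~\ref{L*}, $e^{tL_*}$ is a self-adjoint contraction semigroup on $L^2_{d\mu}$ whose generator is associated with the Dirichlet form $\mathfrak{a}_*$. First I would observe that the two bounds \eqref{2-infty} and \eqref{1-2} are dual to one another: since the measure $d\mu=a(x)\,dx$ is common to $L^1_{d\mu}$ and $L^\infty$, the Banach-space dual of $L^1_{d\mu}$ is $L^\infty$, and the self-adjointness of $e^{tL_*}$ on $L^2_{d\mu}$ identifies the adjoint of $e^{tL_*}\colon L^1_{d\mu}\to L^2_{d\mu}$ with $e^{tL_*}\colon L^2_{d\mu}\to L^\infty$. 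Hence it suffices to establish \eqref{1-2}, and then \eqref{2-infty} follows by transposition.

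For \eqref{1-2} I would use Nash's method. The heart of the matter is a weighted Nash inequality with the effective dimension $\nu:=\tfrac{2(N-\alpha)}{2-\alpha}$ dictated by the parabolic scaling $x\mapsto\lambda x$, $t\mapsto\lambda^{2-\alpha}t$ (under which $d\mu$ scales like $\lambda^{N-\alpha}$), namely
\[
\|u\|_{L^2_{d\mu}}^{2+\frac{4}{\nu}}
\le
C\,\mathfrak{a}_*(u,u)\,\|u\|_{L^1_{d\mu}}^{\frac{4}{\nu}},
\qquad u\in D(\mathfrak{a}_*).
\]
I would derive this by interpolating, via H\"older, a weighted Sobolev (Caffarelli--Kohn--Nirenberg type) inequality $\|u\|_{L^q_{d\mu}}\le C\|\nabla u\|_{L^2}$ with $q=\tfrac{2(N-\alpha)}{N-2}$ against the $L^1_{d\mu}$-norm; a direct computation shows this choice of $q$ produces exactly the exponent $\nu$ above. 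Granting the Nash inequality, the standard argument applies: setting $\phi(t)=\|e^{tL_*}f\|_{L^2_{d\mu}}^2$ one has $\phi'(t)=-2\mathfrak{a}_*(e^{tL_*}f,e^{tL_*}f)$, while the form $\mathfrak{a}_*$ is Markovian (normal contractions do not increase $\int_\Omega|\nabla u|^2\,dx$), so by Beurling--Deny the semigroup is sub-Markovian and in particular $L^1_{d\mu}$-contractive, giving $\|e^{tL_*}f\|_{L^1_{d\mu}}\le\|f\|_{L^1_{d\mu}}$. Feeding these into the Nash inequality yields a differential inequality for $\phi$ whose integration gives $\phi(t)\le Ct^{-\nu/2}\|f\|_{L^1_{d\mu}}^2$, i.e.\ \eqref{1-2}.

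The estimate \eqref{1-3} then comes for free from analyticity. By Lemma~\ref{L*} we have $\|L_*e^{sL_*}\|_{L^2_{d\mu}\to L^2_{d\mu}}\le s^{-1}$, so splitting $L_*e^{tL_*}f=\bigl(L_*e^{(t/2)L_*}\bigr)\bigl(e^{(t/2)L_*}f\bigr)$ and combining the operator bound at time $t/2$ with \eqref{1-2} at time $t/2$ produces the factor $t^{-\frac{N-\alpha}{2(2-\alpha)}-1}$.

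The main obstacle is the weighted Sobolev / Nash inequality underlying the second step. Two points require care. First, the estimates must hold on the exterior domain $\Omega$ with Dirichlet data; here I would extend $u\in D(\mathfrak{a}_*)$ by zero to all of $\R^N$, using that $0\notin\overline{\Omega}$ so that the weight $|x|^{-\alpha}$ is smooth and bounded on the support and no singularity at the origin is encountered, after which the Caffarelli--Kohn--Nirenberg inequality on $\R^N$ can be invoked. Second, the genuine difficulty is the dimension $N=2$, for which the underlying homogeneous Sobolev embedding $\dot{H}^1\hookrightarrow L^{q}$ degenerates; there I expect to replace it by a Hardy-type inequality available precisely because the competitor functions vanish on $\partial\Omega$ and are supported away from the origin, which restores a usable Nash inequality.
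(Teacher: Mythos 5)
A preliminary remark: the present paper never proves Lemma \ref{embedding2} at all --- it is imported verbatim from \cite[Proposition 2.6]{So_Wa1} --- so your attempt has to be judged on its own merits rather than against a proof in this text. With that said, your overall architecture is the standard and correct route to such bounds, and most of it checks out. The duality reduction of \eqref{2-infty} to \eqref{1-2} via self-adjointness of $e^{tL_*}$ in $L^2_{d\mu}$ is valid; the Markovian (Beurling--Deny) property of $\mathfrak{a}_*$ does give sub-Markovianity and hence $L^1_{d\mu}$-contractivity of the semigroup; the factorization $L_*e^{tL_*}=\bigl(L_*e^{(t/2)L_*}\bigr)e^{(t/2)L_*}$ together with Lemma \ref{L*} correctly upgrades \eqref{1-2} to \eqref{1-3}. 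The exponent bookkeeping is also exactly right: $q=\frac{2(N-\alpha)}{N-2}$ is the unique scale-invariant exponent for $\|u\|_{L^q_{d\mu}}\le C\|\nabla u\|_{L^2}$, H\"older interpolation against $L^1_{d\mu}$ (with $\theta=\frac{N-\alpha}{N+2-2\alpha}$) produces the Nash inequality with $\nu=\frac{2(N-\alpha)}{2-\alpha}$, and the iteration yields $t^{-\nu/4}=t^{-\frac{N-\alpha}{2(2-\alpha)}}$. For $N\ge 3$ the Caffarelli--Kohn--Nirenberg inequality you invoke is indeed applicable (take $a=0$, $b=\alpha/q$; then $0\le b\le a+1$ and $a<\frac{N-2}{2}$ hold since $\alpha\in[0,1)$), so in that range your proof is essentially complete modulo routine density arguments.

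The genuine gap is the case $N=2$, which the paper's standing hypothesis $N\ge 2$ includes. There your key embedding is vacuous ($q=\infty$, and the CKN admissibility condition $a<\frac{N-2}{2}$ fails for $a=0$), and the repair you sketch --- a Hardy-type inequality exploiting that competitors vanish on $\pa\Omega$ and are supported away from the origin --- cannot work as described, for a structural reason. The inequality you actually need in $N=2$ is the weighted Nash inequality $\|u\|_{L^2_{d\mu}}^{4}\le C\|\nabla u\|_{L^2}^2\|u\|_{L^1_{d\mu}}^{2}$, and both sides transform identically under $u\mapsto u(\lambda\,\cdot\,)$, a scaling that moves supports. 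Consequently, if this inequality held for all $u$ supported outside a fixed ball $B(0,r_0)$, it would hold with the same constant for all $u\in C_c^\infty(\R^2\setminus\{0\})$, and then (points having zero $H^1$-capacity in the plane) for all of $C_c^\infty(\R^2)$: the hole around the origin and the Dirichlet condition buy you nothing. Moreover, the Hardy inequality that is actually available in planar exterior domains carries a logarithmic correction, roughly $\int_\Omega |u|^2|x|^{-2}\log^{-2}(|x|/r_0)\,dx\le 4\|\nabla u\|_{L^2}^2$, and such a non-scale-invariant bound cannot produce the scale-invariant Nash inequality with the sharp rate. To close $N=2$ you need a different mechanism: for instance, reduce the Nash inequality by level-set truncation to a weighted Faber--Krahn inequality, $\int_E v^2\,d\mu\le C\,\mu(E)\,\|\nabla v\|_{L^2}^2$ for $v$ supported in $E$, which can be verified directly for $d\mu=|x|^{-\alpha}dx$ in two dimensions; or invoke heat-kernel bounds for $|x|^\alpha\Delta$ coming from volume doubling plus a Poincar\'e inequality for the associated intrinsic metric. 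As written, your proposal establishes the lemma only for $N\ge 3$.
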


\begin{remark}\label{rem2.3}
Applying the Riesz--Thorin theorem, we deduce from \eqref{1-2} and 
Lemma \ref{L*} that 
the following $L^2_{d\mu}$-$L^p_{d\mu}$ estimates with $1\leq p\leq 2$ also hold:
\begin{align}
\label{1-4}
	\|L_{*} e^{tL_{\ast}}f\|_{L^2_{d\mu}}
		\leq Ct^{-\frac{N-\alpha}{2(2-\alpha)}(\frac{1}{p}-\frac{1}{2})-1}\|f\|_{L^p_{d\mu}}, 
\quad f\in L^p_{d\mu}\cap L^2_{d\mu}.
\end{align}
\end{remark}

\section{Weighted energy estimates}

In this section we consider the weighted energy estimates for solutions of 
\eqref{dw}. 
First we construct them for compactly supported initial data. Then 
by the standard approximation argument we establish them for 
all reasonable initial data. 
The crucial point is to derive several estimates
which are uniform for the size of the support of initial data.

\subsection{Weighted energy estimates with compactly supported initial data}
For simplicity we will use 
\begin{gather*}
\mathcal{H}_c=\Big\{(f,g) \in (H^2\cap H^1_0(\Omega)) \times H^1_0(\Omega)\;;\;
\text{$f$ and $g$ are compactly supported in }\R^N
\Big\}.
\end{gather*}
Then the finite propagation property gives the following lemma.
\begin{lemma}\label{finite.speed}
Let $u$ be a solution of \eqref{dw} with $(u_0,u_1)\in \mathcal{H}_c.$ 
Then $(u(t),\pa_tu(t))\in \mathcal{H}_c$ for every $t\geq 0$.
\end{lemma}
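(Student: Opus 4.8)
The plan is to separate the two assertions hidden in the statement: the regularity $(u(t),\pa_tu(t))\in (H^2\cap H^1_0(\Omega))\times H^1_0(\Omega)$ and the preservation of compact support. The regularity is immediate from the solution class \eqref{sol}, since $u\in C([0,\infty);H^2\cap H^1_0(\Omega))$ and $\pa_tu\in C([0,\infty);H^1_0(\Omega))$ hold for every fixed $t\ge0$. Thus the entire content of the lemma is the finite propagation property: if $(u_0,u_1)$ is supported in $\overline{B_R}=\{|x|\le R\}$, then $(u(t),\pa_tu(t))$ is supported in the compact set $\overline{B_{R+t}}$.

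To establish the support statement I would use the classical backward--light--cone energy estimate, exploiting that the damping term $a(x)\pa_tu$ is of lower order and has a favourable sign. Fix $(x_0,t_0)$ with $|x_0|>R+t_0$ and, for $0\le t\le t_0$, set $B(t)=\{x\in\Omega:|x-x_0|\le t_0-t\}$ with lateral surface $\Sigma(t)=\{x\in\Omega:|x-x_0|=t_0-t\}$, and define the truncated energy
\[
E(t)=\frac12\int_{B(t)}\Big(|\pa_tu(x,t)|^2+|\nabla u(x,t)|^2\Big)\,dx.
\]
Differentiating with the transport theorem, substituting $\pa_t^2u=\Delta u-a(x)\pa_tu$ from \eqref{dw}, integrating by parts over $B(t)$, and invoking the Dirichlet condition $u(\cdot,t)=0$ on $\pa\Omega$ (so that $\pa_tu=0$ there and the contribution along $\pa\Omega$ drops out) yields
\[
E'(t)=-\int_{B(t)}a(x)|\pa_tu|^2\,dx
+\int_{\Sigma(t)}\Big(\pa_tu\,\frac{\pa u}{\pa n}-\tfrac12|\pa_tu|^2-\tfrac12|\nabla u|^2\Big)\,dS.
\]
Since $a(x)=|x|^{-\alpha}\ge0$ on $\Omega$ the first term is nonpositive, and the elementary bound $\pa_tu\,\pa u/\pa n\le|\pa_tu|\,|\nabla u|\le\tfrac12(|\pa_tu|^2+|\nabla u|^2)$ makes the surface integrand nonpositive; hence $E'(t)\le0$, so $E(t)\le E(0)$.

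Because $B(0)\subset\overline{B_{t_0}(x_0)}$ is disjoint from $\overline{B_R}$ when $|x_0|>R+t_0$, and $(u_0,u_1)$ vanishes outside $\overline{B_R}$, we have $E(0)=0$; therefore $E\equiv0$ on $[0,t_0]$, so $\pa_tu\equiv0$ and $\nabla u\equiv0$ throughout the cone. Together with $u(\cdot,0)=0$ on $B(0)$ this forces $u(x_0,t_0)=0$. As $(x_0,t_0)$ was arbitrary subject to $|x_0|>R+t_0$, the solution at time $t$ is supported in $\overline{B_{R+t}}$, which combined with the regularity above gives $(u(t),\pa_tu(t))\in\mathcal{H}_c$.

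The main obstacle is making the energy computation rigorous under only the regularity of \eqref{sol}: differentiating an integral over the moving domain $B(t)$ and handling the surface integrals require traces of $\pa_tu\in H^1_0(\Omega)$ and $\nabla u\in H^1(\Omega)$ on $\Sigma(t)$, together with justification of the integration by parts near $\pa\Omega$. I would handle this either by mollifying the data in $\mathcal{H}_c$, running the argument for smooth solutions, and passing to the limit, or by working with the space--time integrated form of the differential inequality over the solid cone, which avoids pointwise differentiation of $E$. The crucial point is that the sign $a(x)\ge0$ prevents the damping from interfering, so the space-dependence of the coefficient introduces no new difficulty beyond the standard undamped case.
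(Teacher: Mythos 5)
Your proposal is correct. The paper in fact gives no proof of this lemma at all---it simply invokes ``the finite propagation property'' as known---and your backward light-cone energy argument, with the sign $a(x)=|x|^{-\alpha}\geq 0$ killing the damping contribution and the Dirichlet condition eliminating the boundary terms on $\pa\Omega$, together with the observation that the regularity part of membership in $\mathcal{H}_c$ is immediate from \eqref{sol}, is exactly the standard proof of the property the paper cites, so the two approaches coincide.
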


\subsubsection{Estimates for $\nabla u$ and $\pa_t u$ with weight function $\Psi^\beta$}

Here we define the weighted energy functionals 
which are useful in the present paper. 

\begin{definition}\label{energy}
For $\beta\in \R$ and 
for the solution $w$ of \eqref{dw} with initial data 
$(f,g)\in (H^2\cap H^1_0(\Omega))\times H^1_0(\Omega)$, 
we define
\begin{align*}
E_{\pa x}^{\beta}[t_0,w](t)
&:=
\int_{\Omega}
   |\nabla w(x,t)|^2
   \Psi^{\beta}(x,t_0+t)
\,dx, 
\\
E_{\pa t}^{\beta}[t_0,w](t)
&:=
\int_{\Omega}
   |\pa_tw(x,t)|^2
   \Psi^{\beta}(x,t_0+t)
\,dx,
\\
E_{a}^{\beta}[t_0,w](t)
&:=
\int_{\Omega}
   |w(x,t)|^2a(x)
   \Psi^{\beta}(x,t_0+t)
\,dx.
\end{align*}
Note that these are finite in particular if $(f,g)\in \mathcal{H}_c$ (see Lemma \ref{finite.speed}).
\end{definition}

Throughout this paper, we will use the notation $\Psi^\beta=\Psi^\beta(x,t_0+t)$, for simplicity.

\begin{lemma}\label{energy1}
Let $u$ be a solution of \eqref{dw} with initial data 
$(u_0,u_1)\in \mathcal{H}_c$
and let $\beta \ge 0$.
Then there exist constants $t_1=t_1(\alpha,\beta)\geq 1$ 
and $K_1=K_1(\beta)>0$ such that 
if $t_0\geq t_1$, then 
\begin{align*}
\frac{d}{dt}\Big[E_{\pa x}^{\beta}[t_0,u](t)+E_{\pa t}^{\beta}[t_0,u](t)\Big]
&\leq 
   -\frac{1}{2}
E_a^{\beta}[t_0,\pa_t u](t)
+
K_1
E_{\pa x}^{\beta-1}[t_0,u](t), \quad t\geq 0.
\end{align*}
\end{lemma}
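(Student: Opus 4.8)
The statement is a differential energy identity for the weighted functional $E_{\partial x}^{\beta}+E_{\partial t}^{\beta}$, so the natural approach is the classical multiplier method: multiply the equation $\partial_t^2 u-\Delta u + a(x)\partial_t u=0$ by $\partial_t u$ times the weight $\Psi^{\beta}(x,t_0+t)$, integrate over $\Omega$, and integrate by parts in space. Let me sketch what I expect. Writing $\Psi^{\beta}=\Psi^{\beta}(x,t_0+t)$ as the paper does, the term $\int_{\Omega}(\partial_t^2 u)(\partial_t u)\Psi^{\beta}\,dx$ produces $\tfrac{1}{2}\tfrac{d}{dt}E_{\partial t}^{\beta}$ minus a contribution $\tfrac12\int |\partial_t u|^2 \partial_t\Psi^{\beta}\,dx$ coming from the time-dependence of the weight. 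The term $-\int_{\Omega}(\Delta u)(\partial_t u)\Psi^{\beta}\,dx$, after integrating by parts (the boundary term vanishes since $\partial_t u\in H^1_0(\Omega)$ and the data are compactly supported by Lemma \ref{finite.speed}), yields $\int_{\Omega}\nabla u\cdot\nabla(\partial_t u)\,\Psi^{\beta}\,dx+\int_{\Omega}(\nabla u\cdot\nabla\Psi^{\beta})\partial_t u\,dx$; the first piece is $\tfrac12\tfrac{d}{dt}E_{\partial x}^{\beta}-\tfrac12\int|\nabla u|^2\partial_t\Psi^{\beta}\,dx$. Finally the damping term contributes exactly $\int_{\Omega}a(x)|\partial_t u|^2\Psi^{\beta}\,dx=E_a^{\beta}[t_0,\partial_t u]$ with a favorable sign. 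Collecting everything,
\[
\tfrac12\tfrac{d}{dt}\big[E_{\partial x}^{\beta}+E_{\partial t}^{\beta}\big]
= -E_a^{\beta}[t_0,\partial_t u]
+\tfrac12\int_{\Omega}(|\nabla u|^2+|\partial_t u|^2)\partial_t\Psi^{\beta}\,dx
-\int_{\Omega}(\nabla u\cdot\nabla\Psi^{\beta})\partial_t u\,dx.
\]

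**Controlling the error terms.** The whole point is then to absorb the last two integrals. The key is to estimate $\partial_t\Psi^{\beta}$ and $\nabla\Psi^{\beta}$ by the weight $\Psi^{\beta-1}$ and use that the factor $t_0+t$ is large. A direct computation gives $\partial_t\Psi^{\beta}=\beta\,\Psi^{\beta-1}$ and $\nabla\Psi^{\beta}=\beta\,\Psi^{\beta-1}\,|x|^{1-\alpha}\tfrac{x}{|x|}$, so $|\nabla\Psi^{\beta}|\le C\beta\,\Psi^{\beta-1}|x|^{1-\alpha}$. The cross term is the delicate one: I would estimate it by Cauchy--Schwarz as
\[
\Big|\int_{\Omega}(\nabla u\cdot\nabla\Psi^{\beta})\partial_t u\,dx\Big|
\le \eta\int_{\Omega}a(x)|\partial_t u|^2\Psi^{\beta}\,dx
+\frac{C}{\eta}\int_{\Omega}|\nabla u|^2\,a(x)^{-1}\frac{|\nabla\Psi^{\beta}|^2}{\Psi^{\beta}}\,dx,
\]
for a small parameter $\eta$ to be chosen. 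Since $a(x)^{-1}=|x|^{\alpha}$ and $|\nabla\Psi^{\beta}|^2/\Psi^{\beta}\le C\beta^2\Psi^{\beta-2}|x|^{2-2\alpha}$, the weight in the second integral becomes $|x|^{\alpha}\cdot|x|^{2-2\alpha}\Psi^{\beta-2}=|x|^{2-\alpha}\Psi^{\beta-2}$, and because $|x|^{2-\alpha}\le(2-\alpha)^2\Psi$ by the very definition of $\Psi$, this is bounded by $C\beta^2\Psi^{\beta-1}$, producing exactly a multiple of $E_{\partial x}^{\beta-1}[t_0,u]$. Choosing $\eta=\tfrac12$ lets the cross term eat one half of $E_a^{\beta}[t_0,\partial_t u]$, which is where the coefficient $-\tfrac12$ in the statement comes from.

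**The threshold $t_0\ge t_1$ and the main obstacle.** The term $\tfrac12\int(|\nabla u|^2+|\partial_t u|^2)\partial_t\Psi^{\beta}\,dx=\tfrac{\beta}{2}\int(|\nabla u|^2+|\partial_t u|^2)\Psi^{\beta-1}\,dx$ splits into a good gradient piece contributing to $E_{\partial x}^{\beta-1}$ and a bad time-derivative piece $\tfrac{\beta}{2}\int|\partial_t u|^2\Psi^{\beta-1}\,dx$. The latter is not of the form appearing on the right-hand side, so it must be absorbed into the dissipation $-E_a^{\beta}[t_0,\partial_t u]=-\int a(x)|\partial_t u|^2\Psi^{\beta}\,dx$. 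The comparison I need is $\tfrac{\beta}{2}\Psi^{\beta-1}\le\tfrac14 a(x)\Psi^{\beta}$ pointwise, i.e. $2\beta\le a(x)\Psi=|x|^{-\alpha}\Psi$. This fails near the boundary of $\Omega$ where $|x|$ is of order one unless $\Psi=t_0+t+\cdots$ is large, which is precisely why one needs $t_0\ge t_1(\alpha,\beta)$: taking $t_0$ large forces $\Psi\ge t_0$ everywhere, so that $a(x)\Psi\ge |x|^{-\alpha}t_0$, and since $|x|$ is bounded below on the exterior domain (recall $0\notin\overline{\Omega}$), one can choose $t_1$ so large that the inequality holds uniformly in $x\in\overline{\Omega}$. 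I expect this threshold argument to be the main technical point: one must check that the bad $\partial_t u$ term is dominated by the damping dissipation uniformly over the whole (unbounded) domain, and the interplay between the lower bound on $|x|$, the size of $t_0$, and the exponent $\beta$ is what determines $t_1$. Assembling these estimates, multiplying through by $2$, and relabeling the harmless constant as $K_1(\beta)$ yields the claimed inequality.
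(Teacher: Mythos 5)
Your setup coincides with the paper's: differentiate the two energies, integrate by parts to trade $\nabla(\partial_t u)\cdot\nabla u$ for $-(\partial_t u)\Delta u$ plus the cross term with $\nabla\Psi^{\beta}$, use the equation to produce the dissipation, and control the cross term by Young's inequality with the weight identity $a(x)^{-1}|\nabla\Psi^{\beta}|^{2}\Psi^{-\beta}\le \beta^{2}\Psi^{\beta-1}$ (which rests on $|x|^{2-\alpha}\le(2-\alpha)^{2}\Psi$). All of that part of your argument is correct and is essentially the paper's computation in a slightly different bookkeeping.

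The gap is in the threshold step, the one you yourself call the main technical point. You need the pointwise absorption $2\beta\le |x|^{-\alpha}\Psi$ (up to harmless constants) uniformly in $x\in\Omega$ and $t\ge 0$, and you justify it from $\Psi\ge t_0$ together with ``$|x|$ is bounded below on $\overline{\Omega}$.'' That reasoning is backwards: a lower bound on $|x|$ gives an \emph{upper} bound on $|x|^{-\alpha}$, and since $\Omega$ is an exterior (hence unbounded) domain, $\inf_{x\in\Omega}|x|^{-\alpha}=0$ whenever $\alpha>0$. So $|x|^{-\alpha}t_0\ge 2\beta$ cannot hold uniformly, no matter how large $t_0$ is; the dangerous regime is $|x|\to\infty$, not the boundary region. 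What rescues the inequality at large $|x|$ is the $|x|^{2-\alpha}$-part of $\Psi$, not its constant part. The paper splits
\[
\Psi^{-1}=\Psi^{-\frac{\alpha}{2-\alpha}}\,\Psi^{-\frac{2(1-\alpha)}{2-\alpha}}
\le (2-\alpha)^{\frac{2\alpha}{2-\alpha}}|x|^{-\alpha}\,t_0^{-\frac{2(1-\alpha)}{2-\alpha}},
\]
using $\Psi\ge \frac{|x|^{2-\alpha}}{(2-\alpha)^{2}}$ on the first factor and $\Psi\ge t_0+t\ge t_0$ on the second; then $\beta\Psi^{\beta-1}\le \beta(2-\alpha)^{\frac{2\alpha}{2-\alpha}}t_0^{-\frac{2(1-\alpha)}{2-\alpha}}|x|^{-\alpha}\Psi^{\beta}$, and the coefficient is made at most $\tfrac12$ by choosing $t_0\ge t_1=1+(2\beta)^{\frac{2-\alpha}{2(1-\alpha)}}(2-\alpha)^{\frac{\alpha}{1-\alpha}}$. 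Note that this is precisely where the standing assumption $\alpha<1$ enters (it makes the exponent $\tfrac{2(1-\alpha)}{2-\alpha}$ positive); your version of the argument never uses it, which is a symptom of the error. With this interpolation replacing your uniformity claim, the rest of your proof closes as written.
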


\begin{proof}
Firstly we easily see that 
\begin{align*}
\frac{d}{dt}E_{\pa t}^{\beta}[t_0,u](t)
=
2\int_{\Omega}
   \pa_tu(\pa_t^2u)
   \Psi^{\beta}
\,dx
+
\beta\int_{\Omega}
   |\pa_tu|^2
   \Psi^{\beta-1}
\,dx.
\end{align*}
Secondly by integration by parts we deduce
\begin{align*}
\frac{d}{dt}E_{\pa x}^{\beta}[t_0,u](t)
&=
2\int_{\Omega}
   \nabla (\pa_tu)\cdot \nabla u
   \Psi^{\beta}
\,dx
+
\beta
\int_{\Omega}
   |\nabla u|^2
   \Psi^{\beta-1}
\,dx
\\
&=
-2\int_{\Omega}
   (\pa_tu)\Delta u
   \Psi^{\beta}
\,dx
-2\beta\int_{\Omega}
   (\pa_tu)
   \nabla u\cdot\frac{x|x|^{-\alpha}}{2-\alpha}
   \Psi^{\beta-1}
\,dx
+
\beta
\int_{\Omega}
   |\nabla u|^2
   \Psi^{\beta-1}
\,dx
\end{align*}
Then the Schwarz inequality yields that
\begin{align*}
\frac{d}{dt}E_{\pa x}^{\beta}[t_0,u](t)
&\leq 
-2\int_{\Omega}
   (\pa_tu)\Delta u
   \Psi^{\beta}
\,dx
+
\int_{\Omega}
   |\pa_tu|^2|x|^{-\alpha}
   \Psi^{\beta}
\,dx
\\
&\quad+
\beta^2
\int_{\Omega}
   |\nabla u|^2\frac{|x|^{2-\alpha}}{(2-\alpha)^2}
   \Psi^{\beta-2}
\,dx
+
\beta
\int_{\Omega}
   |\nabla u|^2
   \Psi^{\beta-1}
\,dx
\\
&\leq 
-2\int_{\Omega}
   (\pa_tu)\Delta u
   \Psi^{\beta}
\,dx
+
\int_{\Omega}
   |\pa_tu|^2|x|^{-\alpha}
   \Psi^{\beta}
\,dx
+
\beta(1+\beta)
\int_{\Omega}
   |\nabla u|^2
   \Psi^{\beta-1}
\,dx.
\end{align*}
Therefore combining the above estimates with \eqref{dw} implies that 
\begin{align*}
&\frac{d}{dt}\Big[E_{\pa x}^{\beta}[t_0,u](t)
+E_{\pa t}^{\beta}[t_0,u](t)\Big]
\\
&=
2\int_{\Omega}
   (\pa_tu)(\pa_t^2u-\Delta u)
   \Psi^{\beta}
\,dx
+
\beta
\int_{\Omega}
   |\pa_tu|^2
   \Psi^{\beta-1}
\,dx
\\
&\quad
+
\int_{\Omega}
   |\pa_tu|^2|x|^{-\alpha}
   \Psi^{\beta}
\,dx
+
\beta(1+\beta)
\int_{\Omega}
   |\nabla u|^2
   \Psi^{\beta-1}
\,dx
\\
&\leq
-
\int_{\Omega}
   |\pa_tu|^2
   |x|^{-\alpha}\Psi^{\beta}
\,dx
+
\beta
\int_{\Omega}
   |\pa_tu|^2
   \Psi^{\beta-1}
\,dx
+
\beta(1+\beta)
\int_{\Omega}
   |\nabla u|^2
   \Psi^{\beta-1}
\,dx.
\end{align*}
Thus by noticing that $\alpha\geq 0$ and therefore
$
\Psi^{-\frac{\alpha}{2-\alpha}}
\leq 
(2-\alpha)^\frac{2\alpha}{2-\alpha}|x|^{-\alpha}
$
we obtain
\begin{align*}
\frac{d}{dt}\Big[E_{\pa x}^{\beta}[t_0,u](t)+E_{\pa t}^{\beta}[t_0,u](t)\Big]
&\leq 
\int_{\Omega}
   |\pa_tu|^2
   \left(
   -1
   +
   \beta
   (2-\alpha)^{\frac{2\alpha}{2-\alpha}}
  \Psi^{-\frac{2(1-\alpha)}{2-\alpha}}
   \right)
   |x|^{-\alpha}\Psi^{\beta}
\,dx
\\
&\quad +
\beta(1+\beta)
\int_{\Omega}
   |\nabla u|^2
   \Psi^{\beta-1}
\,dx.
\end{align*}
If 
$t_0\geq t_1:=
1+
(2\beta)^{\frac{2-\alpha}{2(1-\alpha)}}(2-\alpha)^\frac{\alpha}{1-\alpha}$, 
we obtain the desired inequality.
\end{proof}

\subsubsection{Weighted energy estimates for the case $\beta\leq 1$}

\begin{lemma}\label{energy2-pre}
Let $u$ be a solution of \eqref{dw} with initial data 
$(u_0,u_1)\in \mathcal{H}_c$. 
Then for every $t_0\geq 1$ and $t\geq 0$, 
\begin{align*}
\frac{d}{dt}\left[
2\int_{\Omega}u\pa_tu\,dx
+
E_{a}^0[t_0,u](t)
\right]
&
=
2
E_{\pa t}^0[t_0, u](t)
-
2E_{\pa x}^0[t_0,u](t).
\end{align*}
\end{lemma}
\begin{proof}
By \eqref{dw} we have
\begin{align*}
\frac{d}{dt}\left[
2\int_{\Omega}u\pa_tu\,dx
+
\int_{\Omega}
|x|^{-\alpha}|u|^2\,dx
\right]
&
=
2\int_{\Omega}(\pa_tu)^2\,dx
+
2\int_{\Omega}u\Big(\pa_t^2u+|x|^{-\alpha}\pa_tu\Big)\,dx
\\
&
=
2\int_{\Omega}(\pa_tu)^2\,dx
+
2\int_{\Omega}u\Delta u\,dx.
\end{align*}
Integration by parts implies the desired assertion. 
\end{proof}
\begin{proposition}\label{en.decay0}
If $\frac{\alpha}{2-\alpha}\leq \beta\leq 1$, then there exists a constant
$M_1=M_1(\alpha,\beta,N) > 0$ such that
for every $t_0\geq 1$ and $t\geq 0$, 
\begin{align*}
&
 E_{\pa x}^{\beta}[t_0,u](t)
+E_{\pa t}^{\beta}[t_0,u](t)
+E_{a}^0[t_0,u](t)
+
\int_0^t
E_a^{\beta}[t_0,\pa_t u](s)\,ds
\\
&\leq 
M_1
\left(
\int_{\Omega}
\Big(|\nabla u_0|^2+|u_1|^2\Big)
|x|^{(2-\alpha)\beta}
\,dx+
\int_\Omega|u_0|^2
|x|^{-\alpha}
\,dx
\right).
\end{align*}
In particular, if $\beta=1$, then one has
\begin{align*}
&
 E_{\pa x}^{1}[t_0,u](t)
+E_{\pa t}^{1}[t_0,u](t)
+E_{a}^0[t_0,u](t)
+
\int_0^t
E_a^{1}[t_0,\pa_t u](s)\,ds
\leq 
M_1
\int_{\Omega}
\Big(|\nabla u_0|^2+|u_1|^2\Big)
|x|^{2-\alpha}
\,dx.
\end{align*}
\end{proposition}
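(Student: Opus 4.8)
My plan is to integrate the differential inequality from Lemma \ref{energy1} in time and control the resulting lower-order term $E_{\pa x}^{\beta-1}[t_0,u](s)$ by feeding back into the energy itself. The key structural point is that Lemma \ref{energy1} gives, for $t_0\geq t_1$,
\[
\frac{d}{dt}\Big[E_{\pa x}^{\beta}[t_0,u](t)+E_{\pa t}^{\beta}[t_0,u](t)\Big]
\leq
-\frac{1}{2}E_a^{\beta}[t_0,\pa_t u](t)
+K_1 E_{\pa x}^{\beta-1}[t_0,u](t),
\]
so after integrating over $[0,t]$ the only obstruction to closing the estimate is the accumulated term $K_1\int_0^t E_{\pa x}^{\beta-1}[t_0,u](s)\,ds$. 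The main idea is that since $\beta\leq 1$ we have $\beta-1\leq 0$, and the weight $\Psi^{\beta-1}$ with a nonpositive exponent can be dominated — after using $|x|^{-\alpha}\gtrsim\Psi^{-\alpha/(2-\alpha)}$ together with the constraint $\beta\geq \alpha/(2-\alpha)$ — by a multiple of the damping term $E_a^{\beta'}$ for a suitable $\beta'$, which is exactly what the left-hand side already controls through the time-integrated damping. This is the reason for the hypothesis $\frac{\alpha}{2-\alpha}\leq\beta\leq 1$.

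First I would handle the base case $\beta\le 1$ by combining Lemma \ref{energy1} with the auxiliary identity of Lemma \ref{energy2-pre}. The latter controls $E_{\pa x}^0$ (the undamped gradient energy) in terms of $E_{\pa t}^0$ and the time derivative of the correction functional $2\int_\Omega u\,\pa_tu\,dx+E_a^0$. The strategy is to form a well-chosen linear combination, say $E_{\pa x}^{\beta}+E_{\pa t}^{\beta}+\nu\big(2\int u\pa_tu\,dx+E_a^0\big)$ for a small parameter $\nu>0$, so that the bad $K_1E_{\pa x}^{\beta-1}$ term produced by Lemma \ref{energy1} is absorbed by the strictly negative $-2\nu E_{\pa x}^0$ contribution coming from Lemma \ref{energy2-pre}. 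Here I would use $\Psi^{\beta-1}\leq \Psi^{-1}\cdot\Psi^{\beta}$ and interpolate, invoking the weight comparison $|x|^{-\alpha}\gtrsim \Psi^{-\alpha/(2-\alpha)}$ to match powers; the condition $\beta\geq \alpha/(2-\alpha)$ guarantees the exponents align so that $E_{\pa x}^{\beta-1}$ is genuinely lower order relative to the quantities being controlled. One must also check that the cross term $2\int u\pa_t u\,dx$ does not spoil positivity of the combined functional; this is standard, using Young's inequality and the Hardy-type bound to absorb it into $E_{\pa x}^{\beta}+E_a^0$ with $\nu$ small.

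After establishing the differential inequality for the combined functional, I would integrate from $0$ to $t$ and bound the initial values: at $t=0$ the weights are $\Psi^{\beta}(x,t_0)\le C(t_0+|x|^{2-\alpha})^{\beta}$, and since we only claim the estimate for the compactly supported class $\mathcal{H}_c$ with a constant independent of the support size, the $|x|^{(2-\alpha)\beta}$ weight on the data arises precisely from the $|x|^{2-\alpha}$ growth of $\Psi$; the extra $\int_\Omega|u_0|^2|x|^{-\alpha}\,dx$ term is the initial value of $E_a^0$. The special case $\beta=1$ is then cleaner because the weighted Hardy inequality quoted in the first remark, $\big(\tfrac{N-\alpha}{2}\big)^2\int|u|^2|x|^{-\alpha}\le\int|\nabla u|^2|x|^{2-\alpha}$, lets one absorb the $E_a^0$ initial datum directly into $\int|\nabla u_0|^2|x|^{2-\alpha}\,dx$, removing the $|x|^{-\alpha}$ term from the right-hand side. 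The step I expect to be the main obstacle is the absorption argument: choosing $\nu$ and the interpolation exponents so that the bad term $K_1 E_{\pa x}^{\beta-1}$ is uniformly dominated requires that $t_0$ be taken large enough (at least $t_1$ from Lemma \ref{energy1}) and that the weight inequalities hold with constants independent of the support, so the bookkeeping of powers of $\Psi$ against $|x|^{-\alpha}$ near and away from the boundary is the delicate part.
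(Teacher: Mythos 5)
Your skeleton is the paper's own proof: form the combined functional $E_{\pa x}^{\beta}+E_{\pa t}^{\beta}+\nu\bigl(2\int_{\Omega}u\pa_tu\,dx+E_a^0\bigr)$, feed in Lemma \ref{energy1} and Lemma \ref{energy2-pre}, absorb $K_1E_{\pa x}^{\beta-1}$ into the negative term $-2\nu E_{\pa x}^0$, check positivity of the functional by Young's inequality, integrate in time, and use the weighted Hardy inequality to drop the $\int|u_0|^2|x|^{-\alpha}\,dx$ term when $\beta=1$. Up to the choice of $\nu$ (the paper takes $\nu=K_1/2$ when $\beta>\frac{\alpha}{2-\alpha}$ and a small $\nu$ only in the borderline case), this is the published argument.

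There is, however, one genuine confusion in your accounting of which term needs which hypothesis. In your first paragraph you claim that $E_{\pa x}^{\beta-1}$ can be ``dominated by a multiple of the damping term $E_a^{\beta'}$'': this is impossible, since the former involves $|\nabla u|^2$ and the latter $|\pa_tu|^2$, and no weight comparison converts one into the other. Your second paragraph corrects the mechanism (absorption into $-2\nu E_{\pa x}^0$), but note that this step needs nothing beyond $\Psi^{\beta-1}\le t_0^{\beta-1}\Psi^0\le\Psi^0$, i.e.\ only $\beta\le 1$ and $t_0\ge 1$; the comparison $|x|^{-\alpha}\gtrsim\Psi^{-\alpha/(2-\alpha)}$ and the hypothesis $\beta\ge\frac{\alpha}{2-\alpha}$ play no role there. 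Where they are indispensable --- and what your plan never explicitly treats --- is the \emph{positive} kinetic term $+2\nu E_{\pa t}^0[t_0,u]$ that Lemma \ref{energy2-pre} injects into the differential inequality (its right-hand side is $2E_{\pa t}^0-2E_{\pa x}^0$, not just $-2E_{\pa x}^0$). This term must be absorbed into the damping $-\frac{1}{2}E_a^{\beta}[t_0,\pa_tu]$ via $1\le (2-\alpha)^{\frac{2\alpha}{2-\alpha}}|x|^{-\alpha}\Psi^{\frac{\alpha}{2-\alpha}}\le (2-\alpha)^{\frac{2\alpha}{2-\alpha}}t_0^{\frac{\alpha}{2-\alpha}-\beta}|x|^{-\alpha}\Psi^{\beta}$, which is exactly where $\beta\ge\frac{\alpha}{2-\alpha}$ enters; moreover, when $\beta=\frac{\alpha}{2-\alpha}$ the prefactor cannot be made small by enlarging $t_0$, which forces the paper's separate treatment with $\nu$ small there, while for $\beta=1$ the gradient absorption forces $\nu\ge K_1/2$, so $\nu$ cannot be taken ``small'' uniformly in all cases. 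Once you add this absorption step (and the same weight comparison in the Young estimate securing positivity of the cross term), your argument closes and coincides with the paper's.
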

\begin{proof}
In the case $\frac{\alpha}{2-\alpha}<\beta\leq 1$,
by Lemmas \ref{energy1} and \ref{energy2-pre}, we have
\begin{align*}
&
\frac{d}{dt}\left[
E_{\pa x}^{\beta}[t_0,u](t)+E_{\pa t}^{\beta}[t_0,u](t)
+
\frac{K_1}{2}\left(
2\int_{\Omega}u\pa_tu\,dx
+
E_{a}^0[t_0,u](t)
\right)
\right]
\\
&\leq 
   -\frac{1}{2}
E_a^{\beta}[t_0,\pa_t u](t)
+
K_1
E_{\pa t}^0[t_0,\pa_t u](t)
+
K_1
E_{\pa x}^{\beta-1}[t_0,u](t)
-
K_1
E_{\pa x}^0[t_0,u](t)
\\
&\leq
\left( 
-\frac{1}{2}
+K_1(2-\alpha)^{\frac{2\alpha}{2-\alpha}}
t_0^{-\beta+\frac{\alpha}{2-\alpha}}
\right)
E_a^{\beta}[t_0,\pa_t u](t)
+
K_1
(t_0^{\beta-1}
-1)
E_{\pa x}^0[t_0,u](t).
\end{align*}
Taking $t_0'\geq t_1$ such that 
$4K_1(2-\alpha)^{\frac{2\alpha}{2-\alpha}}\leq t_0^{\beta-\frac{\alpha}{2-\alpha}}
$, we deduce
\[
\frac{d}{dt}\left[
E_{\pa x}^{\beta}[t_0,u](t)+E_{\pa t}^{\beta}[t_0,u](t)
+
\frac{K_1}{2}\left(
2\int_{\Omega}u\pa_tu\,dx
+
E_{a}^0[t_0,u](t)
\right)
\right]
\leq 
   -
   \frac{1}{4}
E_a^{\beta}[t_0,\pa_t u](t).
\]
Noting that 
\begin{align*}
\left|
2\int_{\Omega}u\pa_tu\,dx
\right|
&\leq 
\frac{1}{2}
E_{a}^0[t_0,u](t)
+
2
\int_{\Omega}
   |\pa_tu|^2|x|^{\alpha}
\,dx
\\
&\leq 
\frac{1}{2}
E_{a}^0[t_0,u](t)
+
2(2-\alpha)^{\frac{2\alpha}{2-\alpha}}t_0^{-\beta+\frac{\alpha}{2-\alpha}}
E_{\pa t}^{\beta}[t_0,u](t)
\\
&\leq 
\frac{1}{2}
E_{a}^0[t_0,u](t)
+
\frac{1}{2K_1}
E_{\pa t}^{\beta}[t_0,u](t),  
\end{align*}
we have
\begin{align*}
&E_{\pa x}^{\beta}[t_0,u](t)
+\frac{3}{4}E_{\pa t}^{\beta}[t_0,u](t)
+\frac{K_1}{4}
E_{a}^0[t_0,u](t)
+
\frac{1}{4}
\int_0^t
E_a^{\beta}[t_0,\pa_t u](s)\,ds
\\
&\leq 
E_{\pa x}^{\beta}[t_0,u](0)
+\frac{5}{4}E_{\pa t}^{\beta}[t_0,u](0)
+\frac{3K_1}{4}
E_{a}^0[t_0,u](0).
\end{align*}
If $\beta=1$, then a weighted Hardy inequality 
\[
\left(\frac{N-\alpha}{2}\right)^2
\int_{\Omega}|u|^2|x|^{-\alpha}\,dx
\leq \int_{\Omega}|\nabla u|^2|x|^{2-\alpha}\,dx
\]
implies the desired estimate (see e.g., 
Mitidieri \cite{Mitidieri00} and also
Arendt--Goldstein-Goldstein \cite{AGG06}).
On the other hand, if $\beta=\frac{\alpha}{2-\alpha}$, then 
\begin{align*}
&
\frac{d}{dt}\left[
E_{\pa x}^{\frac{\alpha}{2-\alpha}}[t_0,u](t)+E_{\pa t}^{\frac{\alpha}{2-\alpha}}[t_0,u](t)
+
\nu\left(
2\int_{\Omega}u\pa_tu\,dx
+
E_{a}^0[t_0,u](t)
\right)
\right]
\\
&\leq 
   -\frac{1}{2}
E_a^{\frac{\alpha}{2-\alpha}}[t_0,\pa_t u](t)
+
2\nu
E_{\pa t}^0[t_0,\pa_t u](t)
+
K_1
E_{\pa x}^{-\frac{2(1-\alpha)}{2-\alpha}}[t_0,u](t)
-
2\nu
E_{\pa x}^0[t_0,u](t)
\\
&\leq
\left( 
-\frac{1}{2}
+2\nu(2-\alpha)^{\frac{2\alpha}{2-\alpha}}
\right)
E_a^{\frac{\alpha}{2-\alpha}}[t_0,\pa_t u](t)
+
(K_1t_0^{-\frac{2(1-\alpha)}{2-\alpha}}
-2\nu)
E_{\pa x}^0[t_0,u](t).
\end{align*}
Therefore taking $\nu=8^{-1}(2-\alpha)^{-\frac{2\alpha}{2-\alpha}}$ 
and $t_0\geq t_1$ such that 
$t_0^{\frac{2(1-\alpha)}{2-\alpha}}\geq 4K_1(2-\alpha)^{\frac{2\alpha}{2-\alpha}}$, 
we obtain the same inequality as the previous case. 
\end{proof}
\subsubsection{Estimates for $u$ with weight function $\Phi_\lambda$}

Here we define new weighted energy functional
via Kummer's confluent geometric functions,
which plays a crucial role in this paper.  
\begin{definition}
For $\lambda\in [0,\frac{N-\alpha}{2-\alpha})$, 
choose $\ep_*\in(0,\frac{1}{3}]$ such that 
$\lambda=(1-3\ep_*)\frac{N-\alpha}{2-\alpha}$.
Define 
\[
\lambda_*=\lambda(1-2\ep_*)^{-1}<\frac{N-\alpha}{2-\alpha}
\]
and 
\begin{align*}
E_{\Phi}^\lambda[t_0,u](t)
&=
\int_{\Omega}
   \left(2u(x,t)\pa_tu(x,t)+a(x)|u|^2\right)
   [\Phi_{\lambda_*}(x,t_0+t)]^{-1+2\ep_*}
\,dx.
\end{align*}
\end{definition}

\begin{lemma}\label{hardy1}
For every $w\in H^1_0(\Omega)$ 
having a compact support on $\R^N$ and $\lambda>-\frac{N-2}{2-\alpha}$
\begin{equation}\label{hardy.eq}
\int_{\Omega}
   |w|^2|x|^{-\alpha}\Psi^{\lambda-1}
\,dx
\leq 
4
\min\left\{\frac{N-\alpha}{2-\alpha},\frac{N-2}{2-\alpha}+\lambda\right\}^{-2}
\int_{\Omega}
   |\nabla w|^2\Psi^{\lambda}
\,dx.
\end{equation}
\end{lemma}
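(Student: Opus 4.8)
The plan is to prove this as a weighted Hardy inequality by the standard integration-by-parts (vector-field) method, choosing an appropriate radial multiplier adapted to the weight $\Psi^{\lambda}$. The key observation is that $\Psi^{\beta}(x,t_0+t) = \big(t_0+t+\tfrac{|x|^{2-\alpha}}{(2-\alpha)^2}\big)^{\beta}$ depends on $x$ only through $|x|^{2-\alpha}$, so $\nabla \Psi^{\beta} = \beta \Psi^{\beta-1}\, x|x|^{-\alpha}/(2-\alpha)$. Since $w$ has compact support and vanishes on $\partial\Omega$, I can work on all of $\R^N$ (extending $w$ by zero) and integrate by parts freely without boundary terms, provided I am slightly careful near $x=0$ — but note $0\notin\overline{\Omega}$, so $w$ vanishes in a neighborhood of the origin and there is no singularity issue there.

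First I would pick the multiplier field $F(x) = x|x|^{-\alpha}\Psi^{\lambda-1}$ and compute its divergence. Writing the left-hand integrand as $|w|^2\,|x|^{-\alpha}\Psi^{\lambda-1}$, I would identify this with $|w|^2 \,|x|^{-\alpha}\Psi^{\lambda-1} = \frac{1}{D}\,|w|^2\,\mathrm{div}\,F$ for the appropriate scalar $D$, so that integration by parts converts $\int |w|^2\,\mathrm{div}\,F\,dx = -2\int w\,\nabla w\cdot F\,dx$. A direct computation gives
\begin{align*}
\mathrm{div}\big(x|x|^{-\alpha}\Psi^{\lambda-1}\big)
&= (N-\alpha)|x|^{-\alpha}\Psi^{\lambda-1}
+ (\lambda-1)|x|^{-\alpha}\Psi^{\lambda-2}\cdot\frac{|x|^{2-\alpha}}{2-\alpha}\cdot|x|^{-\alpha}\,(2-\alpha)\cdot\frac{1}{2-\alpha}.
\end{align*}
The point is that both terms carry the factor $|x|^{-\alpha}\Psi^{\lambda-1}$ up to the extra bounded factor $\tfrac{|x|^{2-\alpha}}{(2-\alpha)^2}\Psi^{-1}\in[0,1]$, and the coefficient of the leading term is exactly the constant governing the two competing lower bounds $\tfrac{N-\alpha}{2-\alpha}$ and $\tfrac{N-2}{2-\alpha}+\lambda$. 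Tracking these coefficients carefully is what produces the minimum appearing in the stated constant: one bound comes from the $(N-\alpha)$ term, the other from the combination of $N-2$ with $\lambda$ after accounting for the $\Psi^{\lambda-2}$ contribution.

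Having expressed $\int_{\Omega}|w|^2|x|^{-\alpha}\Psi^{\lambda-1}\,dx$ (times the divergence coefficient) as $-2\int_{\Omega} w\,\nabla w\cdot x|x|^{-\alpha}\Psi^{\lambda-1}\,dx$, I would apply Cauchy--Schwarz and Young's inequality, bounding $|x\,|x|^{-\alpha}|\Psi^{\lambda-1} = |x|^{1-\alpha}\Psi^{\lambda-1}$ and using $|x|^{2-\alpha}\Psi^{-1}\le (2-\alpha)^2$ to absorb the gradient term against $\int |\nabla w|^2\Psi^{\lambda}\,dx$. Optimizing the Young's-inequality parameter yields the factor $4$ and the reciprocal-square of the divergence coefficient, giving the claimed constant $4\min\{\tfrac{N-\alpha}{2-\alpha},\tfrac{N-2}{2-\alpha}+\lambda\}^{-2}$.

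The main obstacle I expect is bookkeeping the two separate positivity regimes cleanly: the divergence produces a term with coefficient $N-\alpha$ and a second term whose sign and size depend on whether $\lambda-1$ is handled via the full $\Psi^{\lambda-1}$ weight or via the degenerate factor $|x|^{2-\alpha}\Psi^{-1}$, and it is precisely the interplay between these that forces the minimum of the two constants (the condition $\lambda>-\tfrac{N-2}{2-\alpha}$ being exactly what keeps the second constant positive). I would therefore split the argument according to which of the two quantities is smaller, or equivalently keep the divergence identity exact and only at the final Young's-inequality step choose how to distribute the $\Psi$-powers so that the worse of the two constants controls the estimate uniformly. This coefficient-tracking, rather than any deep analytic difficulty, is where the care is needed.
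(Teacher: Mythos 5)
Your proposal follows essentially the same route as the paper's proof: the paper uses the vector field $\Psi^{\lambda-1}\nabla\Psi=\frac{1}{2-\alpha}\,x|x|^{-\alpha}\Psi^{\lambda-1}$ (your $F$ up to the constant $2-\alpha$), bounds its divergence below by $\min\left\{\frac{N-\alpha}{2-\alpha},\frac{N-2}{2-\alpha}+\lambda\right\}|x|^{-\alpha}\Psi^{\lambda-1}$, and concludes by integration by parts and Cauchy--Schwarz using $|x|^{\alpha}|\nabla\Psi|^2\leq\Psi$, exactly as you outline. Two small points: your displayed divergence carries a stray factor $|x|^{-\alpha}$ in the second term (it should read $(\lambda-1)\frac{|x|^{2-2\alpha}}{2-\alpha}\Psi^{\lambda-2}$), and no case-splitting is needed at the end --- writing $\Psi^{\lambda-1}=\Psi^{\lambda-2}\left((t_0+t)+\frac{|x|^{2-\alpha}}{(2-\alpha)^2}\right)$ shows both coefficients $\frac{N-\alpha}{2-\alpha}$ and $\frac{N-2}{2-\alpha}+\lambda$ multiply nonnegative quantities, so the minimum falls out in one stroke.
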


\begin{proof}
Observe that
\begin{align*}
{\rm div}\left(\Psi^{\lambda-1}\nabla \Psi\right)
&=
\Delta\Psi\Psi^{\lambda-1}
+(\lambda-1)
\Psi^{\lambda-2}|\nabla \Psi|^2
\\
&=
\left[\frac{N-\alpha}{2-\alpha} (t_0+t)
+
\left(\frac{N-\alpha}{2-\alpha}+\lambda-1\right)\frac{|x|^{2-\alpha}}{(2-\alpha)^2}
\right]|x|^{-\alpha}\Psi^{\lambda-2}
\\
&\geq 
\min\left\{\frac{N-\alpha}{2-\alpha},\frac{N-2}{2-\alpha}+\lambda\right\}
|x|^{-\alpha}\Psi^{\lambda-1}.
\end{align*}
On the other hand, integration by parts and Schwarz's inequality yield that 
\begin{align*}
\int_{\Omega}
   |w|^2{\rm div}\left(\Psi^{\lambda-1}\nabla \Psi\right)
\,dx
&=
2
\int_{\Omega}
   w\nabla w \cdot \nabla \Psi\Psi^{\lambda-1}
\,dx
\\
&\leq 
2
\left(
\int_{\Omega}
   |w|^2|x|^{-\alpha}\Psi^{\lambda-1}
\,dx
\right)^\frac{1}{2}
\left(
\int_{\Omega}
   |\nabla w|^2|x|^{\alpha}|\nabla \Psi|^2
   \Psi^{\lambda-1}
\,dx
\right)^\frac{1}{2}
\\
&\leq 
2
\left(
\int_{\Omega}
   |w|^2|x|^{-\alpha}\Psi^{\lambda-1}
\,dx
\right)^\frac{1}{2}
\left(
\int_{\Omega}
   |\nabla w|^2\Psi^{\lambda}
\,dx
\right)^\frac{1}{2}.
\end{align*}
Combining the estimates above, we obtain \eqref{hardy.eq}.
\end{proof}

\begin{lemma}\label{hardy2}
For $w\in H^1_0(\Omega)$ having a compact support in $\R^N$, one has
\begin{align}\label{aux}
(1-\ep_*)
\int_{\Omega}
   |w|^2|\nabla\Phi_{\lambda_*}|^2\Phi_{\lambda_*}^{-3+2\ep_*}
\,dx
\leq 
\frac{1}{1-\ep_*}\int_{\Omega}
   |\nabla w|^2
   \Phi_{\lambda_*}^{-1+2\ep_*}
\,dx
+
\int_{\Omega}
   |w|^2\Delta\Phi_{\lambda_*}\Phi_{\lambda_*}^{-2+2\ep_*}
\,dx.
\end{align}
\end{lemma}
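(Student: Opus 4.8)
The plan is to run the standard ``Hardy via a divergence identity'' argument, abbreviating $\Phi=\Phi_{\lambda_*}$ and regarding $\Phi^{-1+2\ep_*}$ as the reference weight. I first record the admissibility of all the manipulations: since $\lambda_*<\frac{N-\alpha}{2-\alpha}$, Lemma \ref{phi.beta.lem} {\bf (iv)} gives $\varphi_{\lambda_*}>0$ and hence $\Phi>0$ throughout $\R^N\times(0,\infty)$; moreover $w$ is compactly supported in $\R^N$ and $0\notin\overline{\Omega}$, so on $({\rm supp}\,w)\cap\overline{\Omega}$ the weight $\Phi$ is smooth and bounded both above and away from $0$. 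Consequently every integral below is finite and each integration by parts is legitimate, the boundary term on $\pa\Omega$ vanishing because $w\in H^1_0(\Omega)$.

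The key algebraic step isolates the gradient-squared term through
\[
{\rm div}\big(\Phi^{-2+2\ep_*}\nabla\Phi\big)
=\Phi^{-2+2\ep_*}\Delta\Phi-2(1-\ep_*)\,\Phi^{-3+2\ep_*}|\nabla\Phi|^2 .
\]
Multiplying by $|w|^2$, integrating over $\Omega$, and moving the derivative off the divergence onto $|w|^2$ yields
\[
2(1-\ep_*)\int_{\Omega}|w|^2|\nabla\Phi|^2\Phi^{-3+2\ep_*}\,dx
=\int_{\Omega}|w|^2\Delta\Phi\,\Phi^{-2+2\ep_*}\,dx
+2\int_{\Omega}w\,\nabla w\cdot\nabla\Phi\,\Phi^{-2+2\ep_*}\,dx .
\]

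To finish I estimate the cross term. Writing $\Phi^{-2+2\ep_*}=\Phi^{(-1+2\ep_*)/2}\cdot\Phi^{(-3+2\ep_*)/2}$, the Cauchy--Schwarz inequality pointwise followed by Young's inequality with a parameter $\delta>0$ gives
\[
2\,w\,\nabla w\cdot\nabla\Phi\,\Phi^{-2+2\ep_*}
\leq \delta\,|\nabla w|^2\Phi^{-1+2\ep_*}
+\frac{1}{\delta}\,|w|^2|\nabla\Phi|^2\Phi^{-3+2\ep_*}.
\]
Choosing $\delta=\frac{1}{1-\ep_*}$, so that $\frac1\delta=1-\ep_*$, and absorbing the last term into the left-hand side leaves the coefficient $2(1-\ep_*)-(1-\ep_*)=1-\ep_*$ in front of $\int_\Omega|w|^2|\nabla\Phi|^2\Phi^{-3+2\ep_*}\,dx$, while the surviving $|\nabla w|^2$-integral carries the factor $\delta=\frac{1}{1-\ep_*}$. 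This is precisely \eqref{aux}.

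There is no real obstacle here: the computation is short and the single genuine choice is the Young parameter $\delta=1/(1-\ep_*)$, forced by the demand that the leftover coefficient be exactly $1-\ep_*$. The only point that requires attention is the justification of the integration by parts, which is exactly why the compact support of $w$ together with the strict inequality $\lambda_*<\frac{N-\alpha}{2-\alpha}$ (ensuring $\Phi>0$, hence all negative powers of $\Phi$ are well defined and locally bounded on the support of $w$) are invoked at the outset.
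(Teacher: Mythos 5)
Your proof is correct, and it reaches \eqref{aux} with exactly the right constants, but it takes a route different from the paper's. The paper substitutes $v=\Phi_{\lambda_*}^{-1+\ep_*}w$ (so $w=\Phi_{\lambda_*}^{1-\ep_*}v$), expands $|\nabla(\Phi_{\lambda_*}^{1-\ep_*}v)|^2\Phi_{\lambda_*}^{-1+2\ep_*}$, discards the manifestly nonnegative term $\int_\Omega|\nabla v|^2\Phi_{\lambda_*}\,dx$, integrates the cross term by parts, and then rewrites $v$ back in terms of $w$; the inequality there comes from dropping a square, so no absorption is ever needed. You instead test the divergence identity for $\Phi_{\lambda_*}^{-2+2\ep_*}\nabla\Phi_{\lambda_*}$ against $|w|^2$, integrate by parts, and close with Cauchy--Schwarz/Young with the forced parameter $\delta=1/(1-\ep_*)$, absorbing $(1-\ep_*)\int_\Omega|w|^2|\nabla\Phi_{\lambda_*}|^2\Phi_{\lambda_*}^{-3+2\ep_*}\,dx$ into the left-hand side. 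The two arguments are formally equivalent --- the deficit in your Young inequality is precisely the paper's dropped term $\int_\Omega|\nabla v|^2\Phi_{\lambda_*}\,dx$ --- which is why the constants agree exactly. What each buys: your version stays at the level of $w$, with no auxiliary function, at the price of an absorption step that is only legitimate because the absorbed integral is a priori finite; you correctly flag and justify this (compact support of $w$, $0\notin\overline{\Omega}$, and positivity of $\Phi_{\lambda_*}$ from $\lambda_*<\frac{N-\alpha}{2-\alpha}$ via Lemma \ref{phi.beta.lem} \textbf{(iv)}), and indeed your justification of admissibility is more carefully spelled out than the paper's.
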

\begin{proof}
Putting $v=\Phi_{\lambda_*}^{-1+\ep_*}w$ (that is, $w=\Phi_{\lambda_*}^{1-\ep_*}v$) 
and using integration by parts,
we have 
\begin{align*}
\int_{\Omega}
   |\nabla w|^2
   \Phi_{\lambda_*}^{-1+2\ep_*}
\,dx
&=
\int_{\Omega}
   |\nabla (\Phi_{\lambda_*}^{1-\ep_*}v)|^2
   \Phi_{\lambda}^{-1+2\ep_*}
\,dx
\\
&\geq 
(1-\ep_*)
\left[
2\int_{\Omega}
   v\nabla v\cdot\nabla\Phi_{\lambda_*}
\,dx
+
(1-\ep_*)
\int_{\Omega}
   v^2|\nabla\Phi_{\lambda_*}|^2\Phi_{\lambda_*}^{-1}
\,dx
\right]
\\
&=
(1-\ep_*)
\left[
-\int_{\Omega}
   v^2\Delta\Phi_{\lambda_*}
\,dx
+
(1-\ep_*)
\int_{\Omega}
   v^2|\nabla\Phi_{\lambda_*}|^2\Phi_{\lambda_*}^{-1}
\,dx
\right].
\end{align*}
Rewriting $v$ in terms of $w$, we deduce \eqref{aux}.
\end{proof}

\begin{lemma}\label{energy2}
There exist constants $t_2=t_2(\lambda,\alpha,N)$, 
$\eta=\eta(\lambda)$ and $K_2=K_2(\lambda,\alpha,N)>0$ 
such that for every $t_0\geq t_2$, 
\begin{align*}
\frac{d}{dt}\Big[
E_\Phi^{\lambda}[t_0,u](t)
\Big]
&
\leq 
-\eta
E_{\pa x}^\lambda[t_0,u](t)
+
K_2 E_{a}^{\lambda+\frac{\alpha}{2-\alpha}}[t_0,\pa_t u](t), 
\quad t\geq 0.
\end{align*}
\end{lemma}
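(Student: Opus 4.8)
The plan is to differentiate $E_\Phi^\lambda[t_0,u](t)$ directly, substitute the equation \eqref{dw}, and then reorganize the resulting ``potential'' terms so that the special exponent $-1+2\ep_*$ forces an exact cancellation. Write $G=\Phi_{\lambda_*}^{-1+2\ep_*}(\cdot,t_0+t)$ and $p=-1+2\ep_*$. Since $\pa_t(2u\pa_tu+a|u|^2)=2(\pa_tu)^2+2u(\pa_t^2u+a\pa_tu)=2(\pa_tu)^2+2u\Delta u$ by \eqref{dw}, differentiating under the integral sign and integrating by parts twice (the boundary terms vanish because $u|_{\pa\Omega}=0$ and $u(\cdot,t)$ is compactly supported by Lemma \ref{finite.speed}) would give
\[
\frac{d}{dt}E_\Phi^\lambda
=2\int_\Omega(\pa_tu)^2G\,dx-2\int_\Omega|\nabla u|^2G\,dx+\int_\Omega u^2\Delta G\,dx+\int_\Omega(2u\pa_tu+a u^2)\pa_tG\,dx.
\]
Next I would compute $\Delta G$ and $\pa_tG$. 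Because $\Phi_{\lambda_*}$ solves $\pa_t\Phi_{\lambda_*}=|x|^\alpha\Delta\Phi_{\lambda_*}$ (Lemma \ref{Phi.beta.lem} {\bf (i)}), one has $a\pa_tG=p\,\Phi_{\lambda_*}^{p-1}\Delta\Phi_{\lambda_*}$ and hence $\Delta G=a\pa_tG+p(p-1)\Phi_{\lambda_*}^{p-2}|\nabla\Phi_{\lambda_*}|^2$. Substituting this, the $u^2$-terms collapse to $2pB+p(p-1)A$ with $B=\int_\Omega u^2\Phi_{\lambda_*}^{-2+2\ep_*}\Delta\Phi_{\lambda_*}\,dx$ and $A=\int_\Omega u^2\Phi_{\lambda_*}^{-3+2\ep_*}|\nabla\Phi_{\lambda_*}|^2\,dx$.

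The decisive step is to control these by $D:=\int_\Omega|\nabla u|^2G\,dx$ via Lemma \ref{hardy2}, which gives $B\geq(1-\ep_*)A-(1-\ep_*)^{-1}D$. Since $p<0$, this lower bound for $B$ yields $2pB+p(p-1)A\le[2p(1-\ep_*)+p(p-1)]A-\tfrac{2p}{1-\ep_*}D$. Here the bracket equals $p(1-2\ep_*+p)$, which vanishes identically for the chosen $p=-1+2\ep_*$; this exact cancellation is precisely why the exponent $-1+2\ep_*$ and the shift $\lambda_*=\lambda(1-2\ep_*)^{-1}$ are prescribed in the definition of $E_\Phi^\lambda$. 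Thus the potential terms are bounded by $\tfrac{2(1-2\ep_*)}{1-\ep_*}D$, and combined with the leading $-2D$ this leaves $-\tfrac{2\ep_*}{1-\ep_*}D$.

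It then remains to dispose of the first and last integrals. For the kinetic term, $\Psi^{-\frac{\alpha}{2-\alpha}}\le(2-\alpha)^{\frac{2\alpha}{2-\alpha}}|x|^{-\alpha}$ and the two-sided comparison $G\simeq\Psi^{\lambda}$ (Lemma \ref{Phi.beta.lem} {\bf (iii)},{\bf (iv)}, valid since $\lambda_*<\frac{N-\alpha}{2-\alpha}$) give $2\int_\Omega(\pa_tu)^2G\,dx\le CE_a^{\lambda+\frac{\alpha}{2-\alpha}}[t_0,\pa_tu](t)$. For the cross term I would use Young's inequality, splitting $2u\pa_tu\,\pa_tG$ into a piece $\delta\int_\Omega u^2a\Psi^{\lambda-1}\,dx$ and a piece $\delta^{-1}\int_\Omega(\pa_tu)^2(\pa_tG)^2a^{-1}\Psi^{-(\lambda-1)}\,dx$. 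Using $\pa_t\Phi_{\lambda_*}=-\lambda_*\Phi_{\lambda_*+1}$ (Lemma \ref{Phi.beta.lem} {\bf (ii)}) with {\bf (iii)},{\bf (iv)} one gets $|\pa_tG|\le C\Psi^{\lambda-1}$, whence $(\pa_tG)^2a^{-1}\Psi^{-(\lambda-1)}\le C|x|^{\alpha}\Psi^{\lambda-1}\le C'a\Psi^{\lambda+\frac{\alpha}{2-\alpha}}$, the last step being $|x|^{2\alpha}\le C\Psi^{\frac{2}{2-\alpha}}$, which holds since $\alpha\le1$ and $0\notin\overline\Omega$. The second piece thus joins $E_a^{\lambda+\frac{\alpha}{2-\alpha}}[\pa_tu]$, while the first is controlled by $\int_\Omega u^2a\Psi^{\lambda-1}\,dx\le CE_{\pa x}^\lambda[t_0,u](t)$ from the weighted Hardy inequality Lemma \ref{hardy1} (applicable since $\lambda\ge0>-\frac{N-2}{2-\alpha}$). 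Collecting everything and using $D\ge cE_{\pa x}^\lambda$, the coefficient of $E_{\pa x}^\lambda$ becomes $-\tfrac{2\ep_*}{1-\ep_*}c+\delta C$; choosing $\delta$ small and $t_0\ge t_2$ large enough to absorb the remaining lower-order contributions makes it negative, which is the claim with suitable $\eta=\eta(\lambda)>0$ and $K_2$. I expect the main obstacle to be the bookkeeping of the cross term, so that its two halves land respectively in $E_{\pa x}^\lambda$ and $E_a^{\lambda+\frac{\alpha}{2-\alpha}}[\pa_tu]$ with the correct powers, together with verifying the exact vanishing of the $A$-coefficient, which is the structural heart of the whole estimate.
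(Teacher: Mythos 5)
Your proposal is correct and takes essentially the same route as the paper's proof: differentiate $E_\Phi^\lambda$, substitute the equation, integrate by parts, and exploit Lemma \ref{hardy2} so that the exponent $-1+2\ep_*$ (equivalently $\lambda_*=\lambda(1-2\ep_*)^{-1}$) forces the exact cancellation of the potential terms, then convert the $\Phi_{\lambda_*}$-weights into $\Psi^\lambda$-weights via Lemma \ref{Phi.beta.lem} \textbf{(iii)}--\textbf{(iv)} and control the cross term by Young's inequality together with Lemma \ref{hardy1}. The only differences are bookkeeping: you organize the computation so that the coefficient of $\int_\Omega u^2|\nabla\Phi_{\lambda_*}|^2\Phi_{\lambda_*}^{-3+2\ep_*}\,dx$ vanishes where the paper cancels the $\Delta\Phi_{\lambda_*}$-terms, and you absorb the cross term by taking $\delta$ small rather than $t_0$ large, neither of which changes the substance of the argument.
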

\begin{proof}
We see from Lemma \ref{Phi.beta.lem} {\bf (ii)} and {\bf (i)} that 
\begin{align*}
&\frac{d}{dt}
\int_{\Omega}
   2u(\pa_tu)
   \Phi_{\lambda_*}^{-1+2\ep_*}
\,dx
\\
&=
2\int_{\Omega}
   u(\pa_t^2u)
   \Phi_{\lambda_*}^{-1+2\ep_*}
\,dx
+
2\int_{\Omega}
   |\pa_t u|^2
   \Phi_{\lambda_*}^{-1+2\ep_*}
\,dx
-
(1-2\ep_*)
\int_{\Omega}
   u(\pa_t u)
   \left(\pa_t\Phi_{\lambda_*}\right)
   \Phi_{\lambda_*}^{-2+2\ep_*}
\,dx
\\
&=
2\int_{\Omega}
   u(\pa_t^2 u)
   \Phi_{\lambda_*}^{-1+2\ep_*}
\,dx
+
2\int_{\Omega}
   |\pa_t u|^2
   \Phi_{\lambda_*}^{-1+2\ep_*}
\,dx
+(1-2\ep_*)\lambda_*
\int_{\Omega}
   u(\pa_t u)
   \Phi_{\lambda_*+1}
   \Phi_{\lambda_*}^{-2+2\ep_*}
\,dx
\end{align*}
and 
\begin{align*}
\frac{d}{dt}
\int_{\Omega}
   |u|^2|x|^{-\alpha}
   \Phi_{\lambda_*}^{-1+2\ep_*}
\,dx
&=
2\int_{\Omega}
   u(\pa_tu)|x|^{-\alpha}
   \Phi_{\lambda_*}^{-1+2\ep_*}
\,dx
-
(1-2\ep_*)
\int_{\Omega}
   |u|^2|x|^{-\alpha}
   \left(\pa_t\Phi_{\lambda_*}\right)
   \Phi_{\lambda_*}^{-2+2\ep_*}
\,dx
\\
&=
2\int_{\Omega}
   u(\pa_t u)|x|^{-\alpha}
   \Phi_{\lambda_*}^{-1+2\ep_*}
\,dx
-
(1-2\ep_*)
\int_{\Omega}
   |u|^2
   \left(\Delta \Phi_{\lambda_*}\right)
   \Phi_{\lambda_*}^{-2+2\ep_*}
\,dx.
\end{align*}
Combining the equalities above we have 
\begin{align*}
\frac{d}{dt}\Big[
E_\Phi^\lambda[t_0,u](t)
\Big]
&=
2\int_{\Omega}
   u\Big(\pa_t^2 u+|x|^{-\alpha}\pa_tu\Big)
   \Phi_{\lambda_*}^{-1+2\ep_*}
\,dx
+
2\int_{\Omega}
   |\pa_tu|^2
   \Phi_{\lambda_*}^{-1+2\ep_*}
\,dx
\\
&\quad
-
(1-2\ep_*)
\int_{\Omega}
   |u|^2
   \left(\Delta \Phi_{\lambda_*}\right)
   \Phi_{\lambda_*}^{-2+2\ep_*}
\,dx
+
\lambda
\int_{\Omega}
   u(\pa_t u)
   \Phi_{\lambda_*+1}\Phi_{\lambda_*}^{-2+2\ep_*}
\,dx.
\end{align*}
On the other hand, we see from integration by parts 
and Lemma \ref{hardy2} that
\begin{align*}
&2\int_{\Omega}
   u(\Delta u)
   \Phi_{\lambda_*}^{-1+2\ep_*}
\,dx
\\
&=
2(1-2\ep_*)\int_{\Omega}
   u(\nabla u\cdot\nabla\Phi_{\lambda_*}) 
   \Phi_{\lambda_*}^{-2+2\ep_*}
\,dx
-2\int_{\Omega}
   |\nabla u|^2
   \Phi_{\lambda_*}^{-1+2\ep_*}
\,dx
\\
&=
(1-2\ep_*)
\left[
-\int_{\Omega}
   |u|^2(\Delta\Phi_{\lambda_*}) 
   \Phi_{\lambda_*}^{-2+2\ep_*}
\,dx
+
2(1-\ep_*)\int_{\Omega}
   |u|^2|\nabla \Phi_{\lambda_*}|^2 
   \Phi_{\lambda_*}^{-3+2\ep_*}
\,dx
\right]
-2
\int_{\Omega}
   |\nabla u|^2
   \Phi_{\lambda_*}^{-1+2\ep_*}
\,dx
\\
&\leq
(1-2\ep_*)
\int_{\Omega}
   |u|^2(\Delta\Phi_{\lambda_*}) 
   \Phi_{\lambda_*}^{-2+2\ep_*}
\,dx
-\frac{2\ep_*}{1-\ep_*}
\int_{\Omega}
   |\nabla u|^2
   \Phi_{\lambda_*}^{-1+2\ep_*}
\,dx.
\end{align*}
Noting that $\pa_t^2u+|x|^{-\alpha}\pa_tu=\Delta u$, 
using above two estimates, we have
\begin{align*}
\frac{d}{dt}\Big[
E_\Phi^\lambda[t_0,u](t)
\Big]
&\leq 
-\frac{2\ep_*}{1-\ep_*}
\int_{\Omega}
   |\nabla u|^2
   \Phi_{\lambda_*}^{-1+2\ep_*}
\,dx
+
2\int_{\Omega}
   |\pa_tu|^2
   \Phi_{\lambda_*}^{-1+2\ep_*}
\,dx
\\
&\quad
+
\lambda
\int_{\Omega}
   u(\pa_t u)
   \Phi_{\lambda_*+1}\Phi_{\lambda_*}^{-2+2\ep_*}
\,dx.
\end{align*}
Then the assertions {\bf (iii)} and {\bf (iv)} in Lemma \ref{Phi.beta.lem}  
imply that 
\begin{align*}
\frac{d}{dt}\Big[
E_\Phi^\lambda[t_0,u](t)
\Big]
&\leq 
-\frac{2\ep_*}{(1-\ep_*)C_{\lambda_*}^{1-2\ep_*}}
\int_{\Omega}
   |\nabla u|^2
   \Psi^{\lambda}
\,dx
+
\frac{2}{c_{\lambda_*}^{1-2\ep_*}}\int_{\Omega}
   |\pa_tu|^2
   \Psi^{\lambda}
\,dx
\\
&\quad
+
\frac{\lambda C_{\lambda_*+1}}{c_{\lambda_*}^{2-2\ep_*}}
\int_{\Omega}
   |u|\,|\pa_t u|
   \Psi^{\lambda-1}
\,dx.
\end{align*}
Finally, Lemma \ref{hardy1} implies 
\begin{align*}
\frac{d}{dt}\Big[
E_\Phi^\lambda[t_0,u](t)
\Big]
&\leq 
-
\eta\left(
\int_{\Omega}
   |\nabla u|^2
   \Psi^{\lambda}
\,dx
+
\int_{\Omega}
   |u|^2|x|^{-\alpha}
   \Psi^{\lambda-1}
\,dx
\right)
\\
&\quad
+K_2\left(\int_{\Omega}
   |\pa_tu|^2
   \Psi^{\lambda}
\,dx
+
\int_{\Omega}
   |u|^2
   \Psi^{\lambda-2}
\,dx
\right),
\end{align*}
with the constants $\eta=\eta(\lambda)>0$, $K_2=K_2(\alpha,\lambda,N)>0$.
Observe that 
\[
\Psi^{-1}
=
\left(t_0+t+\frac{|x|^{2-\alpha}}{(2-\alpha)^2}\right)^{-\frac{2(1-\alpha)}{2-\alpha}}
\left(t_0+t+\frac{|x|^{2-\alpha}}{(2-\alpha)^2}\right)^{-\frac{\alpha}{2-\alpha}}
\leq 
t_0^{-\frac{2(1-\alpha)}{2-\alpha}}
(2-\alpha)^{\frac{2\alpha}{2-\alpha}}
|x|^{-\alpha}.
\]
By choosing $t_0\geq t_2$ with $t_2$ sufficiently large, 
we obtain the desired inequality.
\end{proof}

\subsubsection{Weighted energy estimates for the case
$1<\beta<\frac{N-\alpha}{2-\alpha}+1$}

\begin{lemma}\label{lower.bdd}
If $\lambda\leq \beta-\frac{\alpha}{2-\alpha}$, 
then for every $\delta>0$, 
there exists a constant $K_3=K_3(\alpha,\lambda,\delta)>0$ 
such that for every $t_0\geq 1$ and $t\geq 0$, 
\begin{align}\label{cross}
\left|
\int_{\Omega}
   2u\pa_tu
   \Phi_{\lambda_*}^{-1+2\ep}
\,dx
\right|
\leq 
K_3t_0^{-(\beta-\lambda-\frac{\alpha}{2-\alpha})}E_{\pa t}^{\beta}[t_0,u](t)
+
\delta 
E_{a}^{\lambda}[t_0,u](t).
\end{align}
In particular, 
\[
-
K_3t_0^{-(\beta-\lambda-\frac{\alpha}{2-\alpha})}E_{\pa t}^{\beta}[t_0,u](t)
+\delta_0 E_{a}^{\lambda}[t_0,u](t)
\leq
E_{\Phi}^\lambda[t_0,u](t)
\leq 
K_3t_0^{-(\beta-\lambda-\frac{\alpha}{2-\alpha})}E_{\pa t}^{\beta}[t_0,u](t)
+2 E_{a}^{\lambda}[t_0,u](t). 
\]
\end{lemma}

\begin{proof}
Young's inequality yields that
\begin{align*}
\left|
\int_{\Omega}
   2u\pa_tu
   \Phi_{\lambda_*}^{-1+2\ep_*}
\,dx
\right|
&\leq 
2K'
\int_{\Omega}
   |u|\,|\pa_tu|
   \Psi^\lambda
\,dx
\\
&\leq 
\frac{(K')^2}{\delta}
\int_{\Omega}
   |\pa_t u|^2
   |x|^{\alpha}   
   \Psi^{\lambda}
\,dx
+
\delta
\int_{\Omega}
   |u|^2
   |x|^{-\alpha}
   \Psi^{\lambda}
\,dx
\\
&\leq 
\frac{(K')^2}{\delta}(2-\alpha)^{\frac{2\alpha}{2-\alpha}}
\int_{\Omega}
   |\pa_t u|^2
   \Psi^{\lambda+\frac{\alpha}{2-\alpha}}
\,dx
+
\delta
\int_{\Omega}
   |u|^2
   |x|^{-\alpha}
   \Psi^{\lambda}
\,dx
\\
&\leq 
\frac{(K')^2}{\delta}(2-\alpha)^{\frac{2\alpha}{2-\alpha}}
t_0^{\lambda+\frac{\alpha}{2-\alpha}-\beta}
\int_{\Omega}
   |\pa_t u|^2
   \Psi^{\beta}
\,dx
+
\delta
\int_{\Omega}
   |u|^2
   |x|^{-\alpha}
   \Psi^{\lambda}
\,dx.
\end{align*}
This implies both of inequalities in the second assertion.
\end{proof}

\begin{proposition}\label{en.decay1}
Let $u$ be a solution of \eqref{dw} with initial data
$(u_0,u_1)\in \mathcal{H}_c$. 
Assume $\beta\in (1,\frac{N-\alpha}{2-\alpha}+1)$.
Then there exists $M_1=M_1(\alpha,\beta,N)$ such that 
\begin{align}\label{main.est1}
   \Big(
   E_{\pa x}^{\beta}[t_0,u](t)
   +
   E_{\pa t}^{\beta}[t_0,u](t)
   \Big)
   + 
\int_{0}^t
E_a^{\beta}[t_0,\pa_t u](s)
\,ds\leq 
M_1
\int_{\Omega}
   \Big(|\nabla u_0|^2+|u_1|^2\Big)|x|^{(2-\alpha)\beta}
\,dx.
\end{align}
\end{proposition}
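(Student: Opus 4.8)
The plan is to bootstrap from the base-case estimate already available at $\beta=1$ (Proposition \ref{en.decay0}) up to arbitrary $\beta\in(1,\frac{N-\alpha}{2-\alpha}+1)$ by an induction on $\beta$ in steps of size one, gluing together the differential inequalities of Lemma \ref{energy1} and Lemma \ref{energy2}. The idea is to form a Lyapunov-type combined energy
\[
F^{\beta}[t_0,u](t):=E_{\pa x}^{\beta}[t_0,u](t)+E_{\pa t}^{\beta}[t_0,u](t)+\kappa\,E_{\Phi}^{\lambda}[t_0,u](t),
\]
where $\lambda=\beta-\frac{\alpha}{2-\alpha}$ is chosen (so that the auxiliary functional $E_\Phi^\lambda$ built from Kummer's function is admissible, i.e. $\lambda<\frac{N-\alpha}{2-\alpha}$, which holds precisely because $\beta<\frac{N-\alpha}{2-\alpha}+1$) and $\kappa>0$ is a small coupling constant to be fixed. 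Differentiating, Lemma \ref{energy1} contributes $-\tfrac12 E_a^{\beta}[t_0,\pa_t u]+K_1 E_{\pa x}^{\beta-1}[t_0,u]$, while $\kappa$ times Lemma \ref{energy2} contributes $-\kappa\eta\,E_{\pa x}^{\lambda}[t_0,u]+\kappa K_2\,E_a^{\lambda+\frac{\alpha}{2-\alpha}}[t_0,\pa_t u]=-\kappa\eta\,E_{\pa x}^{\lambda}+\kappa K_2\,E_a^{\beta}[t_0,\pa_t u]$. The two damping terms $E_a^\beta[t_0,\pa_t u]$ therefore appear with opposite signs, and choosing $\kappa$ small (so $\kappa K_2<\tfrac14$) makes the net coefficient negative.

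First I would establish the equivalence of $F^\beta$ with the genuine energy $E_{\pa x}^\beta+E_{\pa t}^\beta$, using Lemma \ref{lower.bdd}: the cross term inside $E_\Phi^\lambda$ is controlled by $K_3 t_0^{-(\beta-\lambda-\frac{\alpha}{2-\alpha})}E_{\pa t}^\beta+2E_a^\lambda$, and since $\beta-\lambda-\frac{\alpha}{2-\alpha}=0$ by our choice of $\lambda$, the exponent of $t_0$ vanishes — so one must instead absorb $K_3\kappa E_{\pa t}^\beta$ by taking $\kappa$ small relative to the unit coefficient of $E_{\pa t}^\beta$ in $F^\beta$. This guarantees $F^\beta\ge \tfrac12(E_{\pa x}^\beta+E_{\pa t}^\beta)\ge 0$ up to a harmless multiple of $E_a^\lambda$. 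Next I would dispose of the remaining ``bad'' term $K_1 E_{\pa x}^{\beta-1}[t_0,u]$: this is exactly the energy at the lower level $\beta-1$, which by the inductive hypothesis (or, when $\beta-1\le 1$, directly by Proposition \ref{en.decay0}) is already bounded by the data integral $\int_\Omega(|\nabla u_0|^2+|u_1|^2)|x|^{(2-\alpha)(\beta-1)}\,dx$. Since $(2-\alpha)(\beta-1)\le(2-\alpha)\beta$ on the support-truncated data and the weight $|x|^{(2-\alpha)\beta}$ dominates for large $|x|$ (with the low-$|x|$ region absorbed into constants because $0\notin\overline\Omega$), this term integrates in $t$ to something controlled by the target right-hand side.

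Assembling these, after fixing $\kappa$ and then $t_0\ge\max\{t_1,t_2\}=:t_\ast$ large, I obtain
\[
\frac{d}{dt}F^{\beta}[t_0,u](t)\le -\tfrac14 E_a^{\beta}[t_0,\pa_t u](t)+K_1 E_{\pa x}^{\beta-1}[t_0,u](t),
\]
and integrating over $[0,t]$, bounding $F^\beta(0)$ above by the initial data and using the inductive control of $\int_0^t E_{\pa x}^{\beta-1}$, yields \eqref{main.est1} for the restricted threshold $t_0\ge t_\ast$. The final step is to remove the restriction $t_0\ge t_\ast$: for $0\le t_0< t_\ast$ one compares $\Psi^\beta(x,t_0+t)$ with $\Psi^\beta(x,t_\ast+t)$, the two being equivalent up to a constant depending only on $t_\ast,\alpha,\beta$, which transfers the estimate down to all $t_0\ge 1$ (or $t_0=0$) at the cost of enlarging $M_1$. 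The step I expect to be the main obstacle is the induction's closing of the loss $E_{\pa x}^{\beta-1}$ against the data weight $|x|^{(2-\alpha)\beta}$: one must verify that each descent by one unit in $\beta$ genuinely lowers the required data moment and that the constants $M_1$ remain finite up to (but not including) the endpoint $\beta=\frac{N-\alpha}{2-\alpha}+1$, where the Kummer functional degenerates.
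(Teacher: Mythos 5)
There is a genuine gap here --- in fact two. First, your choice $\lambda=\beta-\frac{\alpha}{2-\alpha}$ is not admissible on the whole range of $\beta$: the functional $E_\Phi^\lambda$ requires $\lambda<\frac{N-\alpha}{2-\alpha}$, and $\beta-\frac{\alpha}{2-\alpha}<\frac{N-\alpha}{2-\alpha}$ is equivalent to $\beta<\frac{N}{2-\alpha}$, which is strictly smaller than the assumed bound $\beta<\frac{N-\alpha}{2-\alpha}+1=\frac{N+2-2\alpha}{2-\alpha}$ whenever $\alpha<1$. Your parenthetical claim that admissibility ``holds precisely because $\beta<\frac{N-\alpha}{2-\alpha}+1$'' would be correct only for the excluded value $\alpha=1$. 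The paper's choice $\lambda=\beta-1$ is exactly tuned so that $\lambda<\frac{N-\alpha}{2-\alpha}$ is equivalent to $\beta<\frac{N-\alpha}{2-\alpha}+1$.

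Second, and more seriously, your treatment of the term $K_1E_{\pa x}^{\beta-1}[t_0,u](t)$ does not close. After integrating your differential inequality over $[0,t]$ you need a bound on $\int_0^t E_{\pa x}^{\beta-1}[t_0,u](s)\,ds$ uniform in $t$. The inductive hypothesis at level $\beta-1$ (or Proposition \ref{en.decay0}) gives only $\sup_s E_{\pa x}^{\beta-1}[t_0,u](s)\le M$ together with $\int_0^t E_a^{\beta-1}[t_0,\pa_tu](s)\,ds\le M$; the time-integrated quantity it controls involves $\pa_t u$ with weight $a(x)\Psi^{\beta-1}$, not $\nabla u$. From the pointwise bound alone, $\int_0^t E_{\pa x}^{\beta-1}\,ds$ may grow like $Mt$, so there is no ``inductive control of $\int_0^t E_{\pa x}^{\beta-1}$'' as you assert. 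Producing exactly this missing space-time integral is the entire purpose of the Kummer-weight functional in the paper: Lemma \ref{energy2} with $\lambda=\beta-1$ gives $\frac{d}{dt}E_\Phi^{\beta-1}\le -\eta E_{\pa x}^{\beta-1}+K_2E_a^{\beta-1+\frac{\alpha}{2-\alpha}}[t_0,\pa_tu]$, so adding $\frac{K_1}{\eta}E_\Phi^{\beta-1}$ to the energy cancels $K_1E_{\pa x}^{\beta-1}$ identically, and the leftover term satisfies $E_a^{\beta-\frac{2(1-\alpha)}{2-\alpha}}[t_0,\pa_tu]\le t_0^{-\frac{2(1-\alpha)}{2-\alpha}}E_a^{\beta}[t_0,\pa_tu]$, hence is absorbed by the dissipation once $t_0$ is large; no induction on $\beta$ is needed. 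Incidentally, within your own scheme you could have absorbed $K_1E_{\pa x}^{\beta-1}\le K_1t_0^{-\frac{2(1-\alpha)}{2-\alpha}}E_{\pa x}^{\beta-\frac{\alpha}{2-\alpha}}$ into your negative term $-\kappa\eta E_{\pa x}^{\beta-\frac{\alpha}{2-\alpha}}$ by taking $t_0$ large after fixing $\kappa$, which would repair this second gap without any induction --- but the admissibility restriction above would still confine that argument to $\beta<\frac{N}{2-\alpha}$, short of the full range claimed in the proposition.
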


\begin{remark}
From Proposition \ref{en.decay1} we can derive 
the usual energy decay estimate
\[
\int_{\Omega}|\nabla u(x,t)|^2\,dx
+
\int_{\Omega}|\pa_tu(x,t)|^2\,dx
\leq 
C(1+t)^{-\beta}
\]
with the decay late $\beta<\frac{N-\alpha}{2-\alpha}+1$ 
which upper bound is nothing but the optimal constant obtained in \cite{ToYo09} and also \cite{So_Wa1}.
\end{remark}

\begin{proof}
We recall that Lemma \ref{energy1} and 
Lemma \ref{energy2} with 
$\lambda=\beta-1<\frac{N-\alpha}{2-\alpha}$ 
assert that
\begin{gather}
\frac{d}{dt}\Big[E_{\pa x}^{\beta}[t_0,u](t)+E_{\pa t}^{\beta}[t_0,u](t)\Big]
\leq 
   -\frac{1}{2}
E_a^{\beta}[t_0,\pa_t u](t)
+
K_1E_{\pa x}^{\beta-1}[t_0,u](t), 
\\
\frac{d}{dt}\Big[
E_\Phi^{\beta-1}[t_0,u](t)
\Big]
\leq 
-\eta
E_{\pa x}^{\beta-1}[t_0,u](t)
+
K_2 E_{a}^{\beta-1+\frac{\alpha}{2-\alpha}}[t_0,\pa_t u](t).
\end{gather}
Therefore we have
\begin{align*}
\frac{d}{dt}
\Big[
E_{\pa x}^{\beta}[t_0,u](t)
+E_{\pa t}^{\beta}[t_0,u](t)
+\frac{K_1}{\eta}
E_\Phi^{\beta-1}[t_0,u](t)
\Big]
&\leq 
   -\frac{1}{2}
E_a^{\beta}[t_0,\pa_t u](t)
+
\frac{K_1K_2}{\eta} E_{a}^{\beta-\frac{2(1-\alpha)}{2-\alpha}}[t_0,\pa_t u](t)
\\
&\leq 
   -\frac{1}{2}
E_a^{\beta}[t_0,\pa_t u](t)
+
\frac{K_1K_2}{\eta}t_0^{-\frac{2(1-\alpha)}{2-\alpha}} E_{a}^{\beta}[t_0,\pa_t u](t).
\end{align*}
Observe that 
\begin{align*}
   &E_{\pa x}^{\beta}[t_0,u](t)
   +E_{\pa t}^{\beta}[t_0,u](t)
   +\frac{K_1}{\eta}
E_\Phi^{\beta-1}[t_0,u](t)
\\
&\geq 
   E_{\pa x}^{\beta}[t_0,u](t)
   +\left(1-\frac{K_1K_3}{\eta}t_0^{-\frac{2(1-\alpha)}{2-\alpha}}\right)
   E_{\pa t}^{\beta}[t_0,u](t)
   + \frac{K_1\delta_0}{\eta}E_a^{\beta-1}[t_0,u](t)
\end{align*}
and
\begin{align*}
&   E_{\pa x}^{\beta}[t_0,u](0)
   +E_{\pa t}^{\beta}[t_0,u](0)
   +\nu E_\Phi^{\beta-1}[t_0,u](0)
\\
&\leq 
   E_{\pa x}^{\beta}[t_0,u](0)
   +\left(1+\frac{K_1K_3}{\eta}t_0^{-\frac{2(1-\alpha)}{2-\alpha}}\right)
E_{\pa t}^{\beta}[t_0,u](0)
   +\frac{2K_1}{\eta} E_a^{\beta-1}[t_0,u](0)
\\
&\leq 
   \left(1+
   \frac{8K_1}{\eta}
   \min\left\{\frac{N-\alpha}{2-\alpha},\frac{N-2}{2-\alpha}+\lambda\right\}^{-2}\right)
   E_{\pa x}^{\beta}[t_0,u](0)
   +\left(1+\frac{K_1K_3}{\eta}t_0^{-\frac{2(1-\alpha)}{2-\alpha}}\right)
E_{\pa t}^{\beta}[t_0,u](0).
\end{align*}
In this case by choosing $t_3\geq t_2$ 
such that 
$t_3^{-\frac{2(1-\alpha)}{2-\alpha}}\leq \min\{\frac{\eta}{4K_1K_2},\frac{\eta}{2K_1K_3}\}$,
 we obtain for $t_0\geq t_3$, 
\begin{align*}
\frac{d}{dt}
\Big[
   E_{\pa x}^{\beta}[t_0,u](t)
   +E_{\pa t}^{\beta}[t_0,u](t)
   +\frac{K_1}{\eta}
 E_\Phi^{\beta-1}[t_0,u](t)
\Big]
\leq 
   -\frac{1}{4}
E_a^{\beta}[t_0,\pa_t u](t).
\end{align*}
Proceeding the same argument as in the previous case, we obtain \eqref{main.est1}.
\end{proof}

\subsection{Weighted energy estimates for decaying initial data}

\begin{proof}[Proof of Theorem \ref{main1}]
Let $(u_0,u_1)$ satisfy \eqref{ass.thm}. 
Fix $\eta\in C_c^\infty(\R^N,[0,1])$ satisfying 
$\eta(x)=1$ for $x\in B(0,1)$ 
and 
$\eta(x)=0$ for $x\in \R^N\setminus B(0,2)$. Set 
for each $n\in \N$, 
\[
u_{0n}(x):=\eta(n^{-1}x)u_0(x), 
\quad 
u_{1n}(x):=\eta(n^{-1}x)u_1(x).
\] 
Then clearly we have $(u_{0n},u_{1n})\in \mathcal{H}_c$ for every $n\in\N$. 
Let $u_n$ be a solution of \eqref{dw} with initial data $(u_{0n},u_{1n})$.
Moreover, noting 
$|u_{1n}(x)|\leq |u_1(x)|$ and  
\begin{align*}
|\nabla u_{0n}(x)|
&\leq |\nabla u_0(x)|+n^{-1}|\nabla \eta(n^{-1}x)||u_0(x)|
\\
&\leq |\nabla u_0(x)|+|x|^{-1}\Big(\sup_{y\in\R^N}|y\nabla\eta(y)|\Big)|u_0(x)|,
\end{align*}
by Lemma \ref{hardy1}, we have
\begin{align*}
\int_{\R^N}
   |\nabla u_{0n}|^2|x|^{\gamma}
\,dx
&\leq 
2
\left(\int_{\R^N}
   |\nabla u_{0}|^2|x|^{\gamma}
\,dx
+
\int_{\R^N}
   |u_{0}|^2|x|^{\gamma-2}
\,dx
\right)
\\
&\leq 
2
\left(
1+
\frac{4}{(N+\gamma-2)^2}
\right)
\int_{\R^N}
   |\nabla u_{0}|^2|x|^{\gamma}
\,dx.
\end{align*}
On the other hand, we can check that $(u_{0n},u_{1n})\to (u_0,u_1)$ 
in $H^1_0(\Omega)\times L^2(\Omega)$ as $n\to \infty$. 
This means that the strong continuity of 
the semigroup $(u_0,u_1)\mapsto (u,\pa_tu)$ in $H^1_0(\Omega)\times L^2(\Omega)$ 
implies 
\[
(u_n,\pa_t u_n)\to (u,\pa_tu)
\text{ in }H^1_0(\Omega)\times L^2(\Omega)\text{ as }n\to \infty.
\]
Consequently, applying 
Proposition \ref{en.decay0} ($\gamma\leq 2-\alpha$)
or Proposition \ref{en.decay1} ($\gamma> 2-\alpha$) 
with $\beta=\frac{\gamma}{2-\alpha}$ 
and letting $n\to \infty$, 
we obtain the desired wighted energy estimates 
\eqref{w.est}.
\end{proof}
\section{Weight energy estimates for higher order derivatives}

We prove weighted energy estimates for higher order derivatives.
To state the assertion, we use the compatibility condition of 
order $k$, which is defined as follows:
The initial data $(u_0,u_1)$ satisfy the compatibility condition of 
order $k$ if $u_{j+1}=-\Delta u_{j-1}+|x|^{-\alpha}u_{j}\in L^2(\Omega)$ $(j=1,\ldots,k)$ can be successively defined with $(u_{j-1},u_{j})\in (H^2\cap H^1_0(\Omega))\times H^1_0(\Omega)$ for $j=1,\ldots,k$.

\begin{theorem}\label{main2}
Assume that $(u_0,u_1)$ satisfies the compatibility condition 
of order $k$ greater than $1$. 
Put $u_\ell=-\Delta u_{\ell-2}+a(x)u_{\ell-1}$ $(\ell=2,\ldots,k+1)$.
Suppose that 
\[
\int_{\Omega}\Big(|\nabla u_\ell|^{2}+|u_{\ell+1}|^{2}\Big)|x|^{\gamma+2\ell}\,dx
<\infty
\]
for all $\ell=0,\ldots, k$.
Then there exists $M_\ell=M_\ell(\alpha,\gamma,N,\ell)$ such that 
\begin{align}\label{main.estk}
   \Big(
   E_{\pa x}^{\frac{\gamma+2\ell}{2-\alpha}}[t_0,\pa_t^{\ell}u](t)
   +
   E_{\pa t}^{\frac{\gamma+2\ell}{2-\alpha}}[t_0,\pa_t^{\ell}u](t)
   \Big)
   + 
\int_{0}^t
E_a^{\frac{\gamma+2\ell}{2-\alpha}}[t_0,\pa_t^{\ell+1}u](s) 
\,ds\leq 
M_\ell\sum_{j=0}^\ell
\int_{\Omega}\Big(|\nabla u_j|^{2}+|u_{j+1}|^{2}\Big)|x|^{\gamma+2j}\,dx
\end{align}
\end{theorem}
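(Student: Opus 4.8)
The plan is to prove the estimate by induction on $\ell$, using the key observation that each time-derivative $\pa_t^\ell u$ itself solves the damped wave equation \eqref{dw}. Indeed, differentiating the equation $\pa_t^2u-\Delta u+a(x)\pa_tu=0$ in $t$ repeatedly shows that $v_\ell:=\pa_t^\ell u$ satisfies
\begin{equation*}
\pa_t^2 v_\ell-\Delta v_\ell+a(x)\pa_t v_\ell=0,
\end{equation*}
with initial data $(v_\ell(0),\pa_t v_\ell(0))=(u_\ell,u_{\ell+1})$, where $u_\ell=-\Delta u_{\ell-2}+a(x)u_{\ell-1}$ are exactly the quantities furnished by the compatibility condition of order $k$. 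This is the structural fact that lets the whole machinery of Section 3 be reused verbatim: $v_\ell$ is a genuine solution of \eqref{dw}, so the weighted energy estimate \eqref{main.est1} of Proposition \ref{en.decay1} (and Proposition \ref{en.decay0} for low exponents) applies to $v_\ell$ with the shifted weight exponent $\beta=\frac{\gamma+2\ell}{2-\alpha}$.

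First I would establish the base case $\ell=0$, which is precisely Theorem \ref{main1} (equivalently Propositions \ref{en.decay0}--\ref{en.decay1}) applied with weight exponent $\gamma$, after the compactly-supported approximation argument of Section 3.2; the hypothesis that $\gamma\in[\alpha,N+2-2\alpha)$ guarantees $\beta=\frac{\gamma}{2-\alpha}$ lies in the admissible range $(\,\cdot\,,\frac{N-\alpha}{2-\alpha}+1)$ needed there. For the inductive step, assuming the bound holds up to level $\ell-1$, I would apply Proposition \ref{en.decay1} (or \ref{en.decay0}) to the solution $v_\ell=\pa_t^\ell u$ with the exponent $\beta_\ell=\frac{\gamma+2\ell}{2-\alpha}$. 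The right-hand side produced is
\begin{equation*}
M\int_{\Omega}\Big(|\nabla u_\ell|^2+|u_{\ell+1}|^2\Big)|x|^{(2-\alpha)\beta_\ell}\,dx
=M\int_{\Omega}\Big(|\nabla u_\ell|^2+|u_{\ell+1}|^2\Big)|x|^{\gamma+2\ell}\,dx,
\end{equation*}
which is exactly the $j=\ell$ term on the right of \eqref{main.estk} and is finite by hypothesis. Summing the contributions from levels $0,\dots,\ell$ then yields the claimed bound, with $M_\ell$ absorbing the constants from each application.

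The admissibility of the weight exponent is the point that must be checked carefully rather than the main analytic obstacle: one needs $\beta_\ell=\frac{\gamma+2\ell}{2-\alpha}$ to remain below the Hardy-type threshold $\frac{N-\alpha}{2-\alpha}+1$ that appears in Proposition \ref{en.decay1} and in Lemmas \ref{hardy1}--\ref{hardy2}. Since $\gamma<N+2-2\alpha$ forces $\beta_0<\frac{N-\alpha}{2-\alpha}+1$ only at the base level, for higher $\ell$ the exponent may exceed that threshold, so I expect the genuine work to be verifying that the weighted Hardy inequality of Lemma \ref{hardy1} (whose validity requires $\lambda>-\frac{N-2}{2-\alpha}$, not an upper bound) and the estimates of Lemma \ref{energy2} still close at the larger exponent; concretely, the constant $t_0$ threshold and the sign conditions in Proposition \ref{en.decay1} must be re-examined for $\beta=\beta_\ell$. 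Once that is confirmed, the proof is a clean induction, and the compactly-supported approximation (via the cutoffs $u_{jn}=\eta(n^{-1}\cdot)u_j$ together with Lemma \ref{hardy1} to control the extra gradient terms, exactly as in the proof of Theorem \ref{main1}) transfers the estimate from $\mathcal{H}_c$ to the general decaying data.
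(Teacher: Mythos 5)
There is a genuine gap, and it sits exactly at the point you deferred. Your plan is to apply Proposition \ref{en.decay1} directly to $v_\ell=\pa_t^\ell u$ with $\beta_\ell=\frac{\gamma+2\ell}{2-\alpha}$, hoping that the machinery of Section 3 ``still closes at the larger exponent.'' It does not, and the obstruction is not Lemma \ref{hardy1} (which indeed only needs a lower bound on $\lambda$) but the functional $E_\Phi^\lambda$ itself: Proposition \ref{en.decay1} runs through Lemma \ref{energy2}, whose weight is $\Phi_{\lambda_*}^{-1+2\ep_*}$ with $\lambda=\beta-1$, and this requires $\lambda<\frac{N-\alpha}{2-\alpha}$ so that $\lambda_*<\frac{N-\alpha}{2-\alpha}$ and $\Phi_{\lambda_*}>0$ (Lemma \ref{phi.beta.lem} {\bf (iv)}; for $\beta\geq\frac{N-\alpha}{2-\alpha}$ the Kummer function $M\bigl(\frac{N-\alpha}{2-\alpha}-\beta,\frac{N-\alpha}{2-\alpha};\cdot\bigr)$ has negative first parameter and changes sign, so the weight is no longer a weight). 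For $\ell\geq 1$ one has $\beta_\ell-1\geq\frac{N-\alpha}{2-\alpha}$ whenever $\gamma\geq N+2-2\alpha-2\ell$, which is exactly the interesting regime; so the ``verification'' you postpone is impossible, and the direct application of Proposition \ref{en.decay1} at level $\ell$ breaks down. A further symptom that your route cannot be the right one: it would bound the level-$\ell$ energy by the single data term $\int_\Omega(|\nabla u_\ell|^2+|u_{\ell+1}|^2)|x|^{\gamma+2\ell}\,dx$, whereas the statement \eqref{main.estk} carries the full sum over $j=0,\dots,\ell$ --- that sum is not decorative.

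The paper's actual proof keeps your correct structural observations ($\pa_t^\ell u$ solves \eqref{dw} with data $(u_\ell,u_{\ell+1})$; induct on $\ell$) but replaces the unusable functional $E_\Phi^\lambda$ by a new one,
\[
E_*^{\lambda}[t_0,w](t)=2\int_{\Omega}w\,\pa_tw\,\Psi^{\lambda}(x,t_0+t)\,dx,
\]
which is meaningful for \emph{every} $\lambda$ since $\Psi^\lambda$ needs no positivity restriction. The price is that the resulting differential inequality (Lemmas \ref{energy1-2} and \ref{energy1-3}) is not self-contained: it produces an error term $K_6\,E_a^{\beta-\frac{2}{2-\alpha}}[t_0,\pa_t^j u](t)$ on the right-hand side which cannot be absorbed at level $j$. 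The inductive mechanism is then to integrate in time and observe that
\[
\int_0^t E_a^{\frac{\gamma+2(j-1)}{2-\alpha}}[t_0,\pa_t^{j}u](s)\,ds
\]
is precisely the dissipation integral appearing on the \emph{left}-hand side of the level-$(j-1)$ estimate, so the bad term at level $j$ is paid for by level $j-1$. Chaining from $j=\ell$ down to the base case \eqref{high.1st} (Theorem \ref{main1}) is what generates the sum $\sum_{j=0}^\ell$ in \eqref{main.estk}. In short: your induction skeleton and approximation remarks are fine, but the core analytic step needs the new functional $E_*^\lambda$ and the level-to-level transfer of the error term, neither of which your proposal supplies.
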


\begin{lemma}\label{admit}
Under the assumption in Theorem \ref{main2}, one has
for $\ell=0,\ldots,k$, 
\begin{align*}
E_{\pa x}^{\frac{\gamma+2\ell}{2-\alpha}}[t_0,\pa_t^\ell u](0)
+
E_{\pa t}^{\frac{\gamma+2\ell}{2-\alpha}}[t_0,\pa_t^\ell u](0)
+
E_{a}^{\frac{\gamma+2\ell}{2-\alpha}-1}[t_0,\pa_t^\ell u](0)<\infty.
\end{align*}
\end{lemma}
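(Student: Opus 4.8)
We must show that, for each $\ell = 0, \ldots, k$, the three quantities
\[
E_{\pa x}^{\frac{\gamma+2\ell}{2-\alpha}}[t_0,\pa_t^\ell u](0),\quad
E_{\pa t}^{\frac{\gamma+2\ell}{2-\alpha}}[t_0,\pa_t^\ell u](0),\quad
E_{a}^{\frac{\gamma+2\ell}{2-\alpha}-1}[t_0,\pa_t^\ell u](0)
\]
are finite.

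Let me sketch the plan.
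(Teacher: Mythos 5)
Your proposal contains no proof: after restating the three quantities to be bounded you write ``Let me sketch the plan'' and stop. Everything of substance is missing. Concretely, a proof needs the following ingredients. First, identify $\pa_t^\ell u(\cdot,0)$ with $u_\ell$ and $\pa_t^{\ell+1}u(\cdot,0)$ with $u_{\ell+1}$, which is exactly what the compatibility condition of order $k$ and the equation $\pa_t^2u=\Delta u-a(x)\pa_tu$ provide. Second, observe that at $t=0$ the weight satisfies
\begin{equation*}
\Psi^{\beta}(x,t_0)=\left(t_0+\frac{|x|^{2-\alpha}}{(2-\alpha)^2}\right)^{\beta}
\leq C\left(t_0^{\beta}+|x|^{(2-\alpha)\beta}\right),
\qquad \beta=\frac{\gamma+2\ell}{2-\alpha},
\end{equation*}
and that, since $0\notin\overline{\Omega}$, there is $d>0$ with $|x|\geq d$ on $\Omega$, so the unweighted integrals $\int_\Omega|\nabla u_\ell|^2\,dx$ and $\int_\Omega|u_{\ell+1}|^2\,dx$ are dominated by the assumed weighted integrals $\int_\Omega\big(|\nabla u_\ell|^2+|u_{\ell+1}|^2\big)|x|^{\gamma+2\ell}\,dx$. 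This disposes of $E_{\pa x}^{\beta}[t_0,\pa_t^\ell u](0)$ and $E_{\pa t}^{\beta}[t_0,\pa_t^\ell u](0)$.

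Third, and this is the step that cannot be skipped, the term $E_{a}^{\beta-1}[t_0,\pa_t^\ell u](0)=\int_\Omega|u_\ell|^2|x|^{-\alpha}\Psi^{\beta-1}(x,t_0)\,dx$ is \emph{not} directly controlled by the hypotheses, because the hypotheses involve $\nabla u_\ell$ and $u_{\ell+1}$ but not $u_\ell$ itself in a weighted $L^2$ space. Here you must invoke the weighted Hardy-type inequality of Lemma \ref{hardy1},
\begin{equation*}
\int_{\Omega}|w|^2|x|^{-\alpha}\Psi^{\lambda-1}\,dx
\leq
4\min\left\{\frac{N-\alpha}{2-\alpha},\frac{N-2}{2-\alpha}+\lambda\right\}^{-2}
\int_{\Omega}|\nabla w|^2\Psi^{\lambda}\,dx,
\end{equation*}
applied with $w=u_\ell\in H^1_0(\Omega)$ and $\lambda=\beta$, which bounds $E_{a}^{\beta-1}[t_0,\pa_t^\ell u](0)$ by $E_{\pa x}^{\beta}[t_0,\pa_t^\ell u](0)$, already shown finite. (For non-compactly supported data this requires the approximation argument mentioned in the paper, or an extension of Lemma \ref{hardy1} by density.) Without these three steps your submission is only an announcement of intent, not a proof.
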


Since the same approximation argument as in the proof of 
Theorem \ref{main1}
is also applicable 
in this case, we focus only on the case where the initial 
data $(u_0,u_1)$ have compact supports.

Here we introduce another energy functional $E_{*}^\lambda$ in stead of $E_{\Phi}^\lambda$.
\begin{definition}
For $\beta\in \R$ and 
for the solution $w$ of \eqref{dw} with initial data 
$(f,g)\in \mathcal{H}_c$, 
\[
E_*^{\lambda}[t_0,w](t):=
2\int_{\Omega}
w(x,t)\pa_tw(x,t)\Psi^{\lambda}(x,t_0+t)
\,dx.
\]
\end{definition}
\begin{remark}
$E_\Phi^\lambda$ is meaningful if $\lambda<\frac{N-\alpha}{2-\alpha}$, that is, 
$\Phi_{\lambda}$ is positive for all $x\in \R^N$. 
\end{remark}
\begin{lemma}\label{energy1-2}
Under the assumption in Theorem \ref{main2}, one has
\begin{align}
\nonumber
\frac{d}{dt}
\Big[
E_*^{\lambda}[t_0,u](t)
+
E_a^{\lambda}[t_0,u](t)
\Big]
&\leq 
-
E_{\pa x}^\lambda[t_0,u](t)
+
K_4(\beta)
E_{\pa t}^\lambda[t_0,u](t)
\\
\label{en1-2}
&\quad+
K_5(\alpha,\lambda)
E_{a}^{\lambda-1}[t_0,u](t)
\end{align}
with $K_4(\lambda)=2+\lambda$ and 
$K_5(\alpha,\lambda)
=\lambda\left(\lambda+(2-\alpha)^{\frac{2\alpha}{2-\alpha}}+1\right)$.
\end{lemma}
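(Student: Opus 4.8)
The plan is to differentiate the sum $E_*^\lambda[t_0,u](t)+E_a^\lambda[t_0,u](t)$ directly in $t$ and to exploit a cancellation coming from the damping term. Since we treat compactly supported data, Lemma \ref{finite.speed} guarantees that $u(t)$ stays compactly supported, so every differentiation under the integral sign and every integration by parts below is justified and all boundary contributions vanish (the integral over $\pa\Omega$ because $u=0$ there by the Dirichlet condition). Differentiating $E_*^\lambda$ produces $2E_{\pa t}^\lambda$, a term $2\int_\Omega u\,\pa_t^2u\,\Psi^\lambda\,dx$, and, because $\pa_t\Psi^\lambda=\lambda\Psi^{\lambda-1}$, a cross term $2\lambda\int_\Omega u\,\pa_tu\,\Psi^{\lambda-1}\,dx$. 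Differentiating $E_a^\lambda$ produces $2\int_\Omega u\,\pa_tu\,|x|^{-\alpha}\Psi^\lambda\,dx$ together with $\lambda E_a^{\lambda-1}$. The reason for adding the two functionals is now visible: substituting the equation in the form $\pa_t^2u=\Delta u-|x|^{-\alpha}\pa_tu$ into $2\int_\Omega u\,\pa_t^2u\,\Psi^\lambda\,dx$ creates the term $-2\int_\Omega u\,|x|^{-\alpha}\pa_tu\,\Psi^\lambda\,dx$, which cancels exactly against the corresponding term from $E_a^\lambda$.

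After this cancellation the only remaining contributions are the Laplacian term $2\int_\Omega u\,\Delta u\,\Psi^\lambda\,dx$ and the two cross terms. Integrating the Laplacian term by parts and using $\nabla\Psi^\lambda=\lambda\Psi^{\lambda-1}\frac{|x|^{-\alpha}x}{2-\alpha}$ gives $-2E_{\pa x}^\lambda$ plus a gradient cross term. Suppressing the arguments $[t_0,u](t)$, one thus arrives at the identity
\[
\frac{d}{dt}\big[E_*^\lambda+E_a^\lambda\big]
=2E_{\pa t}^\lambda-2E_{\pa x}^\lambda+\lambda E_a^{\lambda-1}+T_1+T_2,
\qquad
T_2:=2\lambda\int_\Omega u\,\pa_tu\,\Psi^{\lambda-1}\,dx,
\]
with $T_1:=-2\lambda\int_\Omega u\,\nabla u\cdot\frac{|x|^{-\alpha}x}{2-\alpha}\,\Psi^{\lambda-1}\,dx$, and the remaining task is purely to absorb $T_1$ and $T_2$.

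The final step is to estimate $T_1$ and $T_2$ by the Schwarz and Young inequalities, with the Young parameters chosen so that exactly one unit of the available $-2E_{\pa x}^\lambda$ is consumed. For $T_2$, the splitting $|u|\,|\pa_tu|\,\Psi^{\lambda-1}=(|\pa_tu|\Psi^{\lambda/2})(|u|\Psi^{\lambda/2-1})$ yields $\lambda E_{\pa t}^\lambda+\lambda\int_\Omega|u|^2\Psi^{\lambda-2}\,dx$; for $T_1$, after factoring out $|x|^{1-\alpha}$, the balanced Young choice (parameter $\theta=\lambda$) produces $E_{\pa x}^\lambda+\lambda^2\int_\Omega|u|^2\frac{|x|^{2-2\alpha}}{(2-\alpha)^2}\Psi^{\lambda-2}\,dx$. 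To recast the leftover $|u|^2$-integrals as $E_a^{\lambda-1}$ I will use the two elementary pointwise bounds $\frac{|x|^{2-\alpha}}{(2-\alpha)^2}\le\Psi$ and, since $t_0\ge1$ forces $\Psi\ge1$ and $0\le\alpha<1$, the bound $\Psi^{-1}\le\Psi^{-\alpha/(2-\alpha)}\le(2-\alpha)^{\frac{2\alpha}{2-\alpha}}|x|^{-\alpha}$ already employed in Lemma \ref{energy1}. These convert the $T_1$-remainder into $\lambda^2E_a^{\lambda-1}$ and the $T_2$-remainder into $\lambda(2-\alpha)^{\frac{2\alpha}{2-\alpha}}E_a^{\lambda-1}$. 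Collecting all terms gives $-E_{\pa x}^\lambda+(2+\lambda)E_{\pa t}^\lambda+\lambda\big(\lambda+(2-\alpha)^{\frac{2\alpha}{2-\alpha}}+1\big)E_a^{\lambda-1}$, which is exactly \eqref{en1-2}. The only delicate point is bookkeeping: the Young parameter in $T_1$ must be tuned so that its gradient part is precisely $E_{\pa x}^\lambda$ (so that $-2+1=-1$ survives) while simultaneously producing the stated $E_a^{\lambda-1}$ constant. This matching of constants, rather than any conceptual difficulty, is where care is needed.
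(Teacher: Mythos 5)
Your proposal is correct and follows essentially the same route as the paper's proof: the same cancellation of the damping cross terms after substituting $\pa_t^2u+|x|^{-\alpha}\pa_tu=\Delta u$, the same integration by parts producing $-2E_{\pa x}^\lambda$ plus the gradient cross term, the same Young splittings (consuming exactly one unit of $E_{\pa x}^\lambda$), and the same pointwise bounds $\frac{|x|^{2-\alpha}}{(2-\alpha)^2}\le\Psi$ and $\Psi^{-1}\le(2-\alpha)^{\frac{2\alpha}{2-\alpha}}|x|^{-\alpha}$, yielding precisely the stated constants $K_4=2+\lambda$ and $K_5=\lambda\bigl(\lambda+(2-\alpha)^{\frac{2\alpha}{2-\alpha}}+1\bigr)$.
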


\begin{proof}
By a simple calculation we have
\begin{align*}
\frac{d}{dt}
\int_{\Omega}
\left(2u\pa_tu+|x|^{-\alpha}|u|^2\right)\Psi^{\lambda}\,dx
&=
2\int_{\Omega}
|\pa_tu|^2\Psi^{\lambda}\,dx
+
2\int_{\Omega}
u\left(\pa_{t}^2u+|x|^{-\alpha}u \right)\Psi^{\lambda}\,dx
\\
&\quad
+\lambda
\int_{\Omega}
\left(2u\pa_tu+|x|^{-\alpha}|u|^2\right)\Psi^{\lambda-1}\,dx.
\end{align*}
By using the equation in \eqref{dw} 
we see from 
 integration by parts twice 
that
\begin{align*}
2\int_{\Omega}
   u\left(\pa_{t}^2u+|x|^{-\alpha}u \right)\Psi^{\lambda}
\,dx
&=
2\int_{\Omega}
   u(\Delta u)\Psi^{\lambda}
\,dx
\\
&=
-2
\int_{\Omega}
   |\nabla u|^2\Psi^{\lambda}
\,dx
-
2\lambda
\int_{\Omega}
   u\nabla u\cdot\frac{|x|^{-\alpha}x}{2-\alpha}\Psi^{\lambda-1}
\,dx
\\
&\leq
-
\int_{\Omega}
   |\nabla u|^2\Psi^{\lambda}
\,dx
+
\lambda^2 
\int_{\Omega}
   |u|^2\frac{|x|^{2-2\alpha}}{(2-\alpha)^2}\Psi^{\lambda-2}
\,dx
\\
&\leq
-
\int_{\Omega}
   |\nabla u|^2\Psi^{\lambda}
\,dx
+
\lambda^2 
\int_{\Omega}
   |u|^2|x|^{-\alpha}\Psi^{\lambda-1}
\,dx.
\end{align*}
On the other hand, noting that $\Psi^{-\frac{\alpha}{2-\alpha}}\leq (2-\alpha)^{\frac{2\alpha}{2-\alpha}}|x|^{-\alpha}$, we have 
\begin{align*}
\lambda
\int_{\Omega}
\left(2u\pa_tu+|x|^{-\alpha}|u|^2\right)\Psi^{\lambda-1}
\,dx
&\leq 
\lambda
\int_{\Omega}
|\pa_tu|^2\Psi^{\lambda}
\,dx
+
\lambda\int_{\Omega}
|u|^2\Psi^{\lambda-2}
\,dx
+
\lambda\int_{\Omega}
|u|^2|x|^{-\alpha}\Psi^{\lambda-1}
\,dx
\\
&\leq 
\lambda
\int_{\Omega}
|\pa_tu|^2\Psi^{\lambda}
\,dx
+
\lambda\left((2-\alpha)^{\frac{2\alpha}{2-\alpha}}+1\right)
\int_{\Omega}
|u|^2|x|^{-\alpha}\Psi^{\lambda-1}
\,dx.
\end{align*}
And hence we have \eqref{en1-2}.
\end{proof}

\begin{lemma}\label{energy1-3}
Under the assumption in Theorem \ref{main2}, one has
\begin{align*}
&\frac{d}{dt}
\Big[
E_{\pa x}^{\beta}[t_0,u](t)
+
E_{\pa t}^{\beta}[t_0,u](t)
+
K_1\Big(
E_{*}^{\beta-\frac{\alpha}{2-\alpha}}[t_0,u](t)
+
E_{a}^{\beta-\frac{\alpha}{2-\alpha}}[t_0,u](t)
\Big)
\Big]
\\
&\leq
-\frac{1}{4}
E_{a}^{\beta}[t_0,\pa_t u](t)
+
K_6(\alpha,\beta)
E_{a}^{\beta-\frac{2}{2-\alpha}}[t_0,u](t).
\end{align*}
\end{lemma}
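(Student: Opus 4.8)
The plan is to superimpose the top-order estimate of Lemma \ref{energy1} at weight $\beta$ onto a multiple of the $E_*$-estimate of Lemma \ref{energy1-2}, taken at the intermediate exponent $\lambda=\beta-\frac{\alpha}{2-\alpha}$, and then to dispose of the resulting cross-terms by an appropriate choice of the multiplier and of $t_0$. This value of $\lambda$ is forced by the left-hand side, and it is precisely the one that makes the lower-order bookkeeping close: since $\alpha<1$ one has $\lambda-1=\beta-\frac{2}{2-\alpha}$, so the term $K_5(\alpha,\lambda)E_a^{\lambda-1}[t_0,u](t)$ produced by Lemma \ref{energy1-2} is \emph{exactly} the error $E_a^{\beta-\frac{2}{2-\alpha}}[t_0,u](t)$ appearing on the right, and $K_6(\alpha,\beta)$ will be the corresponding multiple of $K_5(\alpha,\lambda)$.

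First I would dispose of the gradient cross-term. Lemma \ref{energy1} contributes $+K_1 E_{\pa x}^{\beta-1}[t_0,u](t)$, whereas Lemma \ref{energy1-2} contributes the favourable term $-E_{\pa x}^{\lambda}[t_0,u](t)$. Because $\alpha<1$ forces $\beta-1<\lambda$, and because $\Psi^{1}(x,t_0+t)\ge t_0$, the pointwise inequality $\Psi^{\beta-1}\le t_0^{-\frac{2(1-\alpha)}{2-\alpha}}\Psi^{\lambda}$ holds, whence $E_{\pa x}^{\beta-1}[t_0,u](t)\le t_0^{-\frac{2(1-\alpha)}{2-\alpha}}E_{\pa x}^{\lambda}[t_0,u](t)$. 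Consequently, once $t_0$ is large the bad gradient term is swallowed by the good one and their combination is nonpositive.

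The delicate point, and what I expect to be the main obstacle, is the term $K_4(\lambda)E_{\pa t}^{\lambda}[t_0,u](t)$ coming from Lemma \ref{energy1-2}. The only way to compare it with the dissipation $-\frac12 E_a^{\beta}[t_0,\pa_t u](t)$ of Lemma \ref{energy1} is through $\Psi^{-\frac{\alpha}{2-\alpha}}\le(2-\alpha)^{\frac{2\alpha}{2-\alpha}}|x|^{-\alpha}$, which gives $E_{\pa t}^{\lambda}[t_0,u](t)\le(2-\alpha)^{\frac{2\alpha}{2-\alpha}}E_a^{\beta}[t_0,\pa_t u](t)$ with a \emph{fixed} constant and, in sharp contrast to the $E_\Phi$-based Lemma \ref{energy2} (whose error $E_a^{\lambda+\frac{\alpha}{2-\alpha}}[t_0,\pa_t u]$ carries the damping weight $a(x)$ and hence a genuinely negative power of $t_0$ relative to $E_a^{\beta}[t_0,\pa_t u]$), with no smallness to spare. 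This is exactly the reason the $E_*$-functional must be weighted by a coefficient that is \emph{first} frozen small enough that (coefficient)$\times K_4(\lambda)(2-\alpha)^{\frac{2\alpha}{2-\alpha}}\le\frac14$, so that the surviving $\pa_t$-term is at most $-\frac14 E_a^{\beta}[t_0,\pa_t u](t)$; only \emph{afterwards} may $t_0$ be enlarged so as to absorb the gradient cross-term of the previous paragraph. The correct ordering of these two choices, namely a small coefficient to tame $E_{\pa t}^{\lambda}$ and then a large $t_0$ to tame $E_{\pa x}^{\beta-1}$, is the crux of the argument; the remaining work is the routine manipulation of powers of $\Psi$ already performed in Lemmas \ref{energy1} and \ref{energy1-2}.
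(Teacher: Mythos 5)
Your proposal is correct and follows essentially the same route as the paper's proof: there, too, one adds $\nu$ times Lemma \ref{energy1-2} at $\lambda=\beta-\frac{\alpha}{2-\alpha}$ to Lemma \ref{energy1}, absorbs $\nu K_4 E_{\pa t}^{\lambda}[t_0,u]$ into $-\frac12 E_a^{\beta}[t_0,\pa_t u]$ via $\Psi^{-\frac{\alpha}{2-\alpha}}\le(2-\alpha)^{\frac{2\alpha}{2-\alpha}}|x|^{-\alpha}$ by first freezing $\nu=\nu_\beta^*$ small, and only afterwards takes $t_0$ large enough that $K_1t_0^{-\frac{2(1-\alpha)}{2-\alpha}}\le\nu$ so that $K_1E_{\pa x}^{\beta-1}$ is swallowed by $-\nu E_{\pa x}^{\lambda}$, leaving the error $\nu K_5 E_a^{\beta-\frac{2}{2-\alpha}}$. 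You even reproduce the paper's own quirk: its proof likewise establishes the inequality with the small multiplier $\nu_\beta^*$ in place of the coefficient $K_1$ displayed in the lemma's statement.
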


\begin{remark}
If $(u_0,u_1)$ has a compact support, then we can use the following estimate
\begin{align*}
&\frac{d}{dt}
\Big[
E_{\pa x}^{\beta}[t_0,u](t)
+
E_{\pa t}^{\beta}[t_0,u](t)
+
K_1\Big(
E_{*}^{\beta-1}[t_0,u](t)
+
E_{a}^{\beta-1}[t_0,u](t)
\Big)
\Big]
\\
&\leq
-\frac{1}{4}
E_{a}^{\beta}[t_0,\pa_t u](t)
+
\widetilde{K}_6(\alpha,\beta)
E_{a}^{\beta-2}[t_0,u](t).
\end{align*}
\end{remark}
\begin{proof}[Proof of Lemma \ref{energy1-3}]
Let $\nu>0$ be determined later. 
By using Lemmas \ref{energy1} and \ref{energy1-2},
we see that
\begin{align*}
&\frac{d}{dt}
\Big[
E_{\pa x}^{\beta}[t_0,u](t)
+
E_{\pa t}^{\beta}[t_0,u](t)
+
\nu\Big(
E_{*}^{\beta-\frac{\alpha}{2-\alpha}}[t_0,u](t)
+
E_{a}^{\beta-\frac{\alpha}{2-\alpha}}[t_0,u](t)
\Big)
\Big]
\\
&\leq-
\frac{1}{2}
E_{a}^{\beta}[t_0,\pa_t u](t)
+
K_1E_{\pa x}^{\beta-1}[t_0,u](t)
-
\nu
E_{\pa x}^{\beta-\frac{\alpha}{2-\alpha}}[t_0,u](t)
\\
&\quad+
\nu
K_4\left(\beta-\frac{\alpha}{2-\alpha}\right)
E_{\pa t}^{\beta-\frac{\alpha}{2-\alpha}}[t_0,u](t)
+
\nu
K_5\left(\alpha,\beta-\frac{\alpha}{2-\alpha}\right)
E_{a}^{\beta-\frac{2}{2-\alpha}}[t_0,u](t)
\\
&\leq
\left(
\nu
K_4(2-\alpha)^{\frac{2\alpha}{2-\alpha}}
-
\frac{1}{2}\right)
E_{a}^{\beta}[t_0,\pa_t u](t)
+
\left(K_1t_0^{-\frac{2(1-\alpha)}{2-\alpha}}-\nu\right)
E_{\pa x}^{\beta-\frac{\alpha}{2-\alpha}}[t_0,u](t)
+
\nu
K_5
E_{a}^{\beta-\frac{2}{2-\alpha}}[t_0,u](t).
\end{align*}
Taking 
$\nu_\beta^*=4^{-1}(2-\alpha)^{-\frac{2-\alpha}{2-\alpha}}K_4(\beta-\frac{\alpha}{2-\alpha})^{-1}$ and 
$t_4\geq t_3$ such that 
$K_1t_4^{-\frac{2(1-\alpha)}{2-\alpha}}\leq \nu^*_\beta$, we have  the desired estimate.
\end{proof}

\begin{proof}[Proof of Theorem \ref{main2}]
First we note that by Theorem \ref{main1} we have
\begin{align}\label{high.1st}
   \Big(
   E_{\pa x}^{\frac{\gamma}{2-\alpha}}[t_0,u](t)
   +
   E_{\pa t}^{\frac{\gamma}{2-\alpha}}[t_0,u](t)
   \Big)
   + 
\int_{0}^t
E_a^{\frac{\gamma}{2-\alpha}}[t_0,\pa_t u](s)
\,ds\leq 
M_1
\int_{\Omega}
   \Big(|\nabla u_0|^2+|u_1|^2\Big)|x|^{\gamma}
\,dx.
\end{align}
Then applying Lemma \ref{energy1-3} with $\beta=\frac{\gamma+2j}{2-\alpha}$
and $w=\pa_t^{j}u$, we have
\begin{align*}
&\frac{d}{dt}
\Big[
E_{\pa x}^{\frac{\gamma+2j}{2-\alpha}}[t_0,\pa_t^ju](t)
+
E_{\pa t}^{\frac{\gamma+2j}{2-\alpha}}[t_0,\pa_t^ju](t)
+
K_1\Big(
E_{*}^{\frac{\gamma+2j}{2-\alpha}-\frac{\alpha}{2-\alpha}}[t_0,\pa_t^ju](t)
+
E_{a}^{\frac{\gamma+2j}{2-\alpha}-\frac{\alpha}{2-\alpha}}[t_0,\pa_t^ju](t)
\Big)
\Big]
\\
&\leq
-\frac{1}{4}
E_{a}^{\frac{\gamma+2j}{2-\alpha}}[t_0,\pa_t^{j+1}u](t)
+
\widetilde{K}_6\left(\alpha,\frac{\gamma+2j}{2-\alpha}\right)
E_{a}^{\frac{\gamma+2j}{2-\alpha}-\frac{2}{2-\alpha}}[t_0,\pa_t^ju](t)
\\
&\leq
-\frac{1}{4}
E_{a}^{\frac{\gamma+2j}{2-\alpha}}[t_0,\pa_t^{j+1}u](t)
+
\widetilde{K}_6\left(\alpha,\frac{\gamma+2j}{2-\alpha}\right)
E_{a}^{\frac{\gamma+2(j-1)}{2-\alpha}}[t_0,\pa_t^ju](t).
\end{align*}
This gives that there exists a constant $\widetilde{M}_j$ such that
\begin{align}
\nonumber
&E_{\pa x}^{\frac{\gamma+2j}{2-\alpha}}[t_0,\pa_t^ju](t)
+
E_{\pa t}^{\frac{\gamma+2j}{2-\alpha}}[t_0,\pa_t^ju](t)
+
E_{a}^{\frac{\gamma+2j}{2-\alpha}-\frac{\alpha}{2-\alpha}}[t_0,\pa_t^ju](t)
+
\int_{0}^t
   E_{a}^{\frac{\gamma+2j}{2-\alpha}}[t_0,\pa_t^{j+1}u](s)   
\,ds
\\
&\leq 
\label{high.2nd}
\widetilde{M}_j
\left(E_{\pa x}^{\frac{\gamma+2j}{2-\alpha}}[t_0,\pa_t^ju](0)
+
E_{\pa t}^{\frac{\gamma+2j}{2-\alpha}}[t_0,\pa_t^ju](0)
+
E_{a}^{\frac{\gamma+2j}{2-\alpha}-\frac{\alpha}{2-\alpha}}[t_0,\pa_t^ju](0)
+
\int_{0}^t
   E_{a}^{\frac{\gamma+2(j-1)}{2-\alpha}}[t_0,\pa_t^{j}u](s)   
\,ds
\right).
\end{align}
Combining \eqref{high.1st} and \eqref{high.2nd}, we obtain the desired 
inequality. 
\end{proof}

\begin{remark}If $(u_0,u_1)$ has compact supports or 
decays fast enough, then we also can prove that 
\begin{align*}
   \Big(
   E_{\pa x}^{\frac{\gamma}{2-\alpha}+2\ell}[t_0,\pa_t^{\ell}u](t)
   +
   E_{\pa t}^{\frac{\gamma}{2-\alpha}+2\ell}[t_0,\pa_t^{\ell}u](t)
   \Big)
   + 
\int_{0}^t
E_a^{\frac{\gamma}{2-\alpha}+2\ell}[t_0,\pa_t^{\ell+1}u](s) 
\,ds
\leq M
\end{align*}
which is much better than the estimate in Theorem \ref{main2}.
In other words, 
the energy decay estimates for higher order 
derivatives heavily depend on 
the behavior of initial data near spacial infinity. 
We would omit the proof of the estimate mentioned above.
\end{remark}

\section{Diffusion phenomena}

To finish this paper, 
we prove Theorem \ref{main3} which deals with diffusion phenomena for the solution of \eqref{dw} 
with the initial data $(u_0,u_1)$ satisfying 
a compatibility condition of order $1$. 

\begin{lemma}\label{check.domain}
Under the assumption in Theorem \ref{main3},
we have $u(t)\in D(L_*)$ for all $t\geq 0$ 
and 
\[
|x|^{\alpha}\pa_t^2u\in L^\infty(0,\infty;L^2_{d\mu}).
\]
\end{lemma}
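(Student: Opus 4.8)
The two assertions are tightly linked through the equation, so I would organise everything around a single weighted bound for $\partial_t^2u$.

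\textbf{Step 1 (reduction to a bound on $|x|^\alpha\partial_t^2u$).} Since $\partial_t^2u=\Delta u-a(x)\partial_tu$ and $|x|^\alpha a(x)=1$, I have $|x|^\alpha\partial_t^2u=|x|^\alpha\Delta u-\partial_tu$, so that $\||x|^\alpha\partial_t^2u\|_{L^2_{d\mu}}^2=\int_\Omega|x|^\alpha|\partial_t^2u|^2\,dx$ and
\[
\int_\Omega|x|^\alpha|\Delta u|^2\,dx\leq 2\int_\Omega|x|^\alpha|\partial_t^2u|^2\,dx+2\int_\Omega a(x)|\partial_tu|^2\,dx .
\]
Because $0\notin\overline{\Omega}$ gives $|x|\geq\delta>0$ on $\Omega$, the last integral is $\leq\delta^{-\alpha}\int_\Omega|\partial_tu|^2\,dx$, which is bounded in $t$ by the (non-increasing) standard energy. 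Thus both conclusions reduce to the single uniform bound
\[
\sup_{t\geq0}\int_\Omega|x|^\alpha|\partial_t^2u(x,t)|^2\,dx<\infty :
\]
this gives $\int_\Omega|x|^\alpha|\Delta u(t)|^2\,dx<\infty$, i.e. $a^{-1/2}\Delta u(t)\in L^2(\Omega)$, whence $u(t)\in D(L_*)$ by Lemma \ref{domain.op}, and it is exactly this boundedness that yields $|x|^\alpha\partial_t^2u\in L^\infty(0,\infty;L^2_{d\mu})$.

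\textbf{Step 2 (reading the bound as a weighted energy estimate for $\partial_tu$).} The plan is to view the required bound as an energy estimate for $v:=\partial_tu$, which formally solves \eqref{dw} with data $(v_0,v_1)=(u_1,-u_2)$. Since $\Psi^{\alpha/(2-\alpha)}\geq c|x|^\alpha$, one has $\int_\Omega|x|^\alpha|\partial_t^2u|^2\,dx\leq C\,E_{\partial t}^{\alpha/(2-\alpha)}[t_0,v](t)$, so it suffices to bound the right-hand side uniformly in $t$. This is precisely Proposition \ref{en.decay0} applied to $v$ at the endpoint exponent $\beta=\frac{\alpha}{2-\alpha}$ (so $(2-\alpha)\beta=\alpha$), which controls $E_{\partial t}^{\beta}[t_0,v](t)$ by
\[
M_1\Big(\int_\Omega\big(|\nabla u_1|^2+|u_2|^2\big)|x|^{\alpha}\,dx+\int_\Omega|u_1|^2|x|^{-\alpha}\,dx\Big).
\]
Both integrals are finite under the hypotheses of Theorem \ref{main3}: on $\Omega$ one has $|x|\geq\delta$ together with $\alpha\leq\gamma+2$ and $-\alpha\leq\gamma$, hence $|x|^\alpha\leq C|x|^{\gamma+2}$ and $|x|^{-\alpha}\leq C|x|^{\gamma}$, giving the bound $C(\mathcal{E}_0+\mathcal{E}_1)$.

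\textbf{Step 3 (regularity and approximation — the hard part).} The genuine difficulty is that order-$1$ compatibility only gives $u\in C^2([0,\infty);L^2)\cap C^1([0,\infty);H^1_0)$, so $v=\partial_tu$ is \emph{not} in the strong-solution class \eqref{sol}, and the integrations by parts underlying Proposition \ref{en.decay0} are not directly licensed for $v$. I would therefore run the whole computation on smooth, compactly supported approximations $(u_0^{(n)},u_1^{(n)})$ vanishing to high order on $\partial\Omega$, for which $\partial_tu_n$ is a classical solution and all the estimates above are rigorous, and then pass to the limit by Fatou's lemma, using $\partial_t^2u_n(t)\to\partial_t^2u(t)$ in $L^2_{\mathrm{loc}}$ (a consequence of the continuous dependence on the data). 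The delicate point, exactly as in the approximation in the proof of Theorem \ref{main1}, is to keep the right-hand side controlled by $\mathcal{E}_0+\mathcal{E}_1$ uniformly in $n$: with a cutoff $\eta(x/n)$ one gets $u_2^{(n)}=\eta(x/n)u_2-2n^{-1}\nabla\eta(x/n)\cdot\nabla u_0-n^{-2}\Delta\eta(x/n)u_0$, whose commutator terms carry factors $\lesssim|x|^{-1}$ and $\lesssim|x|^{-2}$ supported on $\{n\leq|x|\leq2n\}$. I would absorb these using the weighted Hardy inequality of Lemma \ref{hardy1} together with $\gamma>-\alpha$, so that they are dominated by $\mathcal{E}_0$ and tend to $0$; the limiting constant is then $C(\mathcal{E}_0+\mathcal{E}_1)$. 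This approximation, rather than the energy identity itself, is where the real work lies.
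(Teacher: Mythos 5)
Your proposal is correct, and it shares the paper's overall skeleton---obtain a bound on $\sup_{t\geq0}\int_\Omega|x|^\alpha|\partial_t^2u|^2\,dx$ by treating $\partial_tu$ as a solution of \eqref{dw}, use the equation to convert this into a bound on $a^{-1/2}\Delta u$, and conclude $u(t)\in D(L_*)$ via Lemma \ref{domain.op}---but the key estimate you invoke is genuinely different. The paper applies Theorem \ref{main1} and Theorem \ref{main2} (with $\ell=1$) to get the uniform bound
\[
\int_\Omega\big(|\nabla u|^2+|\partial_tu|^2\big)\Psi^{\frac{\gamma}{2-\alpha}}\,dx
+\int_\Omega\big(|\nabla\partial_tu|^2+|\partial_t^2u|^2\big)\Psi^{\frac{\gamma+2}{2-\alpha}}\,dx\leq M,
\]
and then downgrades the weights to $|x|^{2-\alpha}$ and $|x|^{4-\alpha}$ using $\gamma\geq2-\alpha$ and $0\notin\overline{\Omega}$; in particular it consumes the full hypothesis $\mathcal{E}_1<\infty$ with weight $|x|^{\gamma+2}$. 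You instead apply Proposition \ref{en.decay0} directly to $v=\partial_tu$ at the endpoint $\beta=\frac{\alpha}{2-\alpha}$, which requires only the data integrals with weights $|x|^{\alpha}$ and $|x|^{-\alpha}$---a strictly weaker requirement once $|x|\geq\delta>0$ is exploited. This is leaner, and it makes visible that Lemma \ref{check.domain} by itself does not need the full strength of $\mathcal{E}_1$ (the paper needs that weight anyway for the terms $\mathcal{J}_1$ and $\mathcal{J}_2$ in the proof of Theorem \ref{main3}, so nothing is saved globally). The regularity issue you isolate in Step 3 is real---order-$1$ compatibility does not place $v=\partial_tu$ in the strong-solution class \eqref{sol}---but it is not specific to your route: the paper's application of Theorem \ref{main2} with $k=1$ to $\partial_tu$ faces exactly the same issue and resolves it only through the blanket approximation remark following Lemma \ref{admit}; your Step 3 merely spells out the cutoff/mollification bookkeeping that remark suppresses. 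So both arguments stand or fall on the same approximation step, and your Steps 1--2 are sound.
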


\begin{proof}
We note that $u(t)\in D(\mathfrak{a}_*)$ by the inclusion 
$H^1_0(\Omega) \subset D(\mathfrak{a}_*)$. 
Applying Theorems \ref{main1} and \ref{main2} 
with $\ell=1$, we have for $t\geq 0$, 
\[
\int_{\Omega}
   \Big(|\nabla u|^{2}+|\pa_tu|^2\Big)\Psi^{\frac{\gamma}{2-\alpha}}
\,dx
+
\int_{\Omega}
   \Big(|\nabla \pa_t u|^{2}+|\pa_t^2u|^2\Big)\Psi^{\frac{\gamma+2}{2-\alpha}}
\,dx\leq M
\]
and therefore the assumption $\gamma\geq 2-\alpha$ yields that 
for $t\geq 0$, 
\begin{equation}\label{5.2}
\int_{\Omega}
   \Big(|\nabla u|^{2}+|\pa_tu|^2\Big)|x|^{2-\alpha}
\,dx
+
\int_{\Omega}
   \Big(|\nabla \pa_t u|^{2}+|\pa_t^2u|^2\Big)|x|^{4-\alpha}
\,dx\leq M.
\end{equation}
This inequality implies 
\begin{align*}
\|a(x)^{-\frac{1}{2}}\Delta u\|_{L^2(\Omega)}^2
&=
\int_{\Omega}
  |\Delta u|^2
|x|^{\alpha}\,dx
\\
&\leq
\int_{\Omega}
  \Big|\Delta u-|x|^{-\alpha}\pa_tu\big|^2
|x|^{\alpha}\,dx
+
\int_{\Omega}
  |\pa_tu|^2|x|^{-\alpha}\,dx
\\
&\leq
\int_{\Omega}
  |\pa_t^2u|^2
|x|^{\alpha}\,dx
+
\int_{\Omega}
  |\pa_tu|^2|x|^{-\alpha}\,dx
\\
&\leq M'.
\end{align*}
Since $u(t)$ satisfies the Dirichlet boundary condition on $\pa \Omega$, 
by Lemma \ref{domain.op} 
we deduce $u(t)\in D(L_*)$. 
Moreover, by \eqref{5.2} we see that
\begin{align*}
\big\||x|^{\alpha}\pa_t^2u\big\|_{L^2_{d\mu}}^2
=
\int_{\Omega}
|\pa_t^2u|^2|x|^{\alpha}\,dx
=
\int_{\Omega}
|\pa_t^2u|^2|x|^{4-\alpha-(4-2\alpha)}\,dx\leq M''.
\end{align*}
This completes the proof.
\end{proof}

\begin{proof}[Proof of Theorem \ref{main3}]
Applying Theorem \ref{main2} with $k=1$, we have
\begin{gather}
\label{for.dif1}
\int_{0}^t
E_a^{\frac{\gamma}{2-\alpha}}(s,\pa_tu) 
\,ds\leq 
M_0\mathcal{E}_{0}, \quad t\geq 0.
\\
\label{for.dif2}
   E_{\pa t}^{\frac{\gamma+2}{2-\alpha}}(t,\pa_tu)
\leq 
M_1(\mathcal{E}_0+\mathcal{E}_1), \quad t\geq 0.
\\
\label{for.dif3}
\int_{0}^t
E_a^{\frac{\gamma+2}{2-\alpha}}(s,\pa_t^2u) 
\,ds\leq 
M_1(\mathcal{E}_0+\mathcal{E}_1), \quad t\geq 0.
\end{gather}
Here we rewrite \eqref{dw} as the problem 
\[
\pa_tu-|x|^{\alpha}\Delta u=-|x|^{\alpha}\pa_t^2u, \quad t\geq 0.
\]
with $u(0)=u_0\in L^2_{d\mu}$, where we regard $-|x|^{\alpha}\pa_t^2u$ 
as a inhomogeneous term for the heat equation 
$\pa_tu-|x|^{\alpha}\Delta u=0$ in $L^2_{d\mu}$. 
By virtue of Lemma \ref{check.domain}, we see from the 
standard semigroup theory that 
\[
u(t)=e^{tL*}u_0-\int_{0}^{t}e^{(t-s)L_*}[|x|^{\alpha}\pa_t^2u(s)]\,ds.
\]
As in \cite{Wa14} (see also \cite{So_Wa1} and \cite{So_Wa2}), we have 
\begin{align*}
u(t)
-e^{tL_*}[u_0+|x|^{\alpha}u_1]
&=-
\int_{\frac{t}{2}}^{t}
   e^{(t-s)L_*}\big[|x|^{\alpha}\pa_t^2u(s)\big]
\,ds
-e^{\frac{t}{2}L_*}\big[|x|^{\alpha}\pa_tu(t/2)\big]
\\
&\quad-
\int_0^{\frac{t}{2}}
   L_*e^{(t-s)L_*}\big[|x|^{\alpha}\pa_tu(s)\big]
\,ds.
\end{align*}
Taking the $L^2_{d\mu}$-norm, we see
\begin{align*}
\Big\|u(t)-e^{tL_*}[u_0+|x|^{\alpha}u_1]\Big\|_{L^2_{d\mu}}
&\leq 
\int_{\frac{t}{2}}^t\big\||x|^{\alpha}\pa_t^2(s)\big\|_{L^2_{d\mu}}\,ds
+
\big\||x|^{\alpha}\pa_tu(t/2)\big\|_{L^2_{d\mu}}
\\
&\quad+
\int_{0}^{\frac{t}{2}}
\big\|L_*e^{(t-s)L_*}[|x|^{\alpha}\pa_tu(s)]\big\|_{L^2_{d\mu}}\,ds
\\
&=:
\mathcal{J}_1
+\mathcal{J}_2
+\mathcal{J}_3.
\end{align*}
Schwarz's inequality and 
the definition of $\Psi^\beta$ yield
\begin{align*}
\mathcal{J}_1^2
&\leq 
\frac{t}{2}
\int_{\frac{t}{2}}^t
\big\||x|^{\alpha}\partial_t^2 u(s)\big\|_{L^2_{d\mu}}^2\,ds
\\
&\leq 
\frac{t}{2}(2-\alpha)^{\frac{4\alpha}{2-\alpha}}
\int_{\frac{t}{2}}^t
\left(\int_{\Omega}|x|^{-\alpha}|\pa_t^2u(s)|^2\Psi^{\frac{2\alpha}{2-\alpha}}\,dx\right)
\,ds
\\
&\leq 
\frac{t}{2}(2-\alpha)^{\frac{4\alpha}{2-\alpha}}
\int_{\frac{t}{2}}^t
(t_0+s)^{\frac{2\alpha}{2-\alpha}-\frac{\gamma+2}{2-\alpha}}
\left(\int_{\Omega}|x|^{-\alpha}|\pa_t^2u(s)|^2\Psi^{\frac{\gamma+2}{2-\alpha}}\,dx\right)
\,ds
\\
&\leq
\frac{t}{2}(2-\alpha)^{\frac{4\alpha}{2-\alpha}}
\left(t_0+\frac{t}{2}\right)^{-\frac{\gamma-\alpha}{2-\alpha}-1}
\int_{\frac{t}{2}}^t
E_{a}^{\frac{\gamma+2}{2-\alpha}}[t_0,\pa_t^2u](s)
\,ds.
\end{align*}
By \eqref{for.dif3}, we have 
$\mathcal{J}_1\leq 
\mathcal{K}_1(1+t)^{-\frac{\gamma-\alpha}{2(2-\alpha)}}(\mathcal{E}_0+\mathcal{E}_1)^{\frac{1}{2}}$.
By a computation similar to the one for $\mathcal{J}_1$, 
we deduce from \eqref{for.dif2} that 
\begin{align*}
\mathcal{J}_2^2
&=
\int_{\Omega}|x|^{\alpha}|\pa_tu(t/2)|^2\,dx
\\
&\leq 
(2-\alpha)^{\frac{4\alpha}{2-\alpha}}
\int_{\Omega}
   |x|^{-\alpha}|\pa_tu(t/2)|^2
   \Psi^{\frac{2\alpha}{2-\alpha}}
\,dx
\\
&\leq 
(2-\alpha)^{\frac{4\alpha}{2-\alpha}}
\left(t_0+\frac{t}{2}\right)^{-\frac{\gamma-\alpha}{2-\alpha}}
\int_{\Omega}
   |x|^{-\alpha}|\pa_tu(t/2)|^2
   \Psi^{\frac{\gamma+\alpha}{2-\alpha}}
\,dx
\\
&\leq 
(2-\alpha)^{\frac{4\alpha}{2-\alpha}}
\left(t_0+\frac{t}{2}\right)^{-\frac{\gamma-\alpha}{2-\alpha}}
E_a^{\frac{\gamma+2}{2-\alpha}-1}
[t_0,\pa_tu](t/2)
\\
&\leq \mathcal{K}_2^2
(1+t)^{-\frac{\gamma-\alpha}{2-\alpha}}(\mathcal{E}_0+\mathcal{E}_1).
\end{align*}
For $\mathcal{J}_3$, we divide the proof 
into two cases 	
$2-\alpha\leq \gamma\leq N+\alpha$
and $N+\alpha<\gamma<N+2-\alpha$. 
In the former case, we set $1<p(\theta)\leq 2$ $(\theta>0)$ as
\[
p(\theta)
= 
\frac{2(N-\alpha+\theta)}{N+\gamma-3\alpha+\theta}
>
1.
\]
that is, 
\[
p(\theta)\alpha-\alpha = 
(\gamma-\alpha)\frac{p(\theta)}{2}
-(N+\theta)\left(1-\frac{p(\theta)}{2}\right).
\]
Then we have 
\begin{align*}
\big\|
  |x|^{\alpha}\pa_tu(s)\big
\|_{L^{p(\theta)}_{d\mu}}^2
&=
\left(
   \int_{\Omega}|x|^{p(\theta)\alpha-\alpha}|\pa_tu(s)|^{p(\theta)}\,dx
\right)^{\frac{2}{p(\theta)}}
\\
&\leq 
\left(
   \int_{\Omega}|x|^{-N-\theta}\,dx
\right)^{\frac{2}{p(\theta)}-1}
   \int_{\Omega}|x|^{\gamma-\alpha}|\pa_tu(s)|^{2}\,dx
\\
&\leq 
(2-\alpha)^{\frac{2\gamma}{2-\alpha}}
\left(
   \int_{\Omega}|x|^{-N-\theta}\,dx
\right)^{\frac{2}{p(\theta)}-1}
\int_{\Omega}
   |\pa_tu(s)|^{2}|x|^{-\alpha}
   \Psi^{\frac{\gamma}{2-\alpha}}\,dx
\\
&\leq 
(2-\alpha)^{\frac{2\gamma}{2-\alpha}}
\left(
   \int_{\Omega}|x|^{-N-\theta}\,dx
\right)^{\frac{2}{p(\theta)}-1}
 E_{a}^{\frac{\gamma}{2-\alpha}}[t_0,\pa_tu](s).
\end{align*}
And then by \eqref{for.dif1}, 
\begin{align*}
\mathcal{J}_3
&=
\int_{0}^\frac{t}{2}
\big\|L_*e^{(t-s)L_*}[|x|^{\alpha}\partial_t u(s)]\big\|_{L^2_{d\mu}}
\,ds
\\
&\leq 
\int_{0}^\frac{t}{2}
(t-s)^{-\frac{N-\alpha}{2-\alpha}(\frac{1}{p(\theta)}-\frac{1}{2})-1}
\big\||x|^{\alpha}\partial_t u(s)\big\|_{L^{p(\theta)}_{d\mu}}
\,ds
\\
&\leq 
\left(\frac{t}{2}\right)^{-\frac{N-\alpha}{2-\alpha}
(\frac{1}{p(\theta)}-\frac{1}{2})-\frac{1}{2}}
\left(\int_{0}^\frac{t}{2}
\big\||x|^{\alpha}\partial_tu(s)\big\|_{L^{p(\theta)}_{d\mu}}^2
\,ds\right)^\frac{1}{2}
\\
&\leq 
\mathcal{K}_{3,\theta}(1+t)^{-\frac{N-\alpha}{2-\alpha}(\frac{1}{p(\theta)}-\frac{1}{2})-\frac{1}{2}}\mathcal{E}_0^{\frac{1}{2}}.
\end{align*}
Noting that by choosing $\theta$ small enough, 
\begin{align*}
\frac{N-\alpha}{2-\alpha}
\left(\frac{1}{p(\theta)}-\frac{1}{2}\right)+\frac{1}{2}
&=
\frac{(\gamma-2\alpha)(N-\alpha)}{2(N+\theta-\alpha)(2-\alpha)}+\frac{1}{2}
\\
&=
\frac{\gamma-\alpha}{2(2-\alpha)}
+
\frac{1-\alpha}{2-\alpha}
-
\frac{(\gamma-2\alpha)\theta}{2(N+\theta-\alpha)(2-\alpha)}
\\
&\geq 
\frac{\gamma-\alpha}{2(2-\alpha)}, 
\end{align*}
we have $\mathcal{J}_3\leq \mathcal{K}_{3,\theta}
(1+t)^{-\frac{\gamma-\alpha}{2-\alpha}}$. 
In the latter case, we choose $p=1$ and then we deduce 
\begin{align*}
\big\|
  |x|^{\alpha}\pa_tu(s)\big
\|_{L^1_{d\mu}}^2
&=
\left(
   \int_{\Omega}|\pa_tu(s)|\,dx
\right)^{2}
\\
&\leq 
\left(
   \int_{\Omega}|x|^{\alpha-\gamma}\,dx
\right)
   \int_{\Omega}|x|^{\gamma-\alpha}|\pa_tu(s)|^{2}\,dx
\\
&\leq 
(2-\alpha)^{\frac{2\gamma}{2-\alpha}}
\left(
   \int_{\Omega}|x|^{\alpha-\gamma}\,dx
\right)
 E_{a}^{\frac{\gamma}{2-\alpha}}[t_0,\pa_tu](s).
\end{align*}
This implies that 
\begin{align*}
\mathcal{J}_3
&=
\int_{0}^\frac{t}{2}
\big\|L_*e^{(t-s)L_*}[|x|^{\alpha}\partial_tu(s)]\big\|_{L^2_{d\mu}}
\,ds
\\
&\leq 
\int_{0}^\frac{t}{2}
(t-s)^{-\frac{N-\alpha}{2(2-\alpha)}-1}
\big\||x|^{\alpha}\partial_tu(s)\big\|_{L^1_{d\mu}}
\,ds
\\
&\leq 
\left(\frac{t}{2}\right)^{-\frac{N-\alpha}{2(2-\alpha)}-\frac{1}{2}}
\left(\int_{0}^\frac{t}{2}
E_a^{\frac{\alpha}{2-\alpha}}[t_0, \partial_tu](s)
\,ds\right)^\frac{1}{2}
\\
&\leq 
\mathcal{K}_{3}'(1+t)^{-\frac{N-\alpha}{2(2-\alpha)}-\frac{1}{2}}\mathcal{E}_0^{\frac{1}{2}}.
\end{align*}
Since 
\[
\frac{N-\alpha}{2(2-\alpha)}+\frac{1}{2}
=
\frac{N+2-2\alpha}{2(2-\alpha)}
>\frac{\gamma}{2(2-\alpha)}
\geq\frac{\gamma-\alpha}{2(2-\alpha)}
\] 
by the assumption of this case, we have 
$\mathcal{J}_3\leq \mathcal{K}_{3}'
(1+t)^{-\frac{\gamma-\alpha}{2-\alpha}}\mathcal{E}_0^{\frac{1}{2}}$. 
Consequently, in both case we obtain 
\[
\Big\|u(t)-e^{tL}[u_0+|x|^{\alpha}u_1]\Big\|_{L^2_{d\mu}}
\leq 
\mathcal{K}_0(1+t)^{-\frac{\gamma-\alpha}{2(2-\alpha)}}(\mathcal{E}_0+\mathcal{E}_1)^{\frac{1}{2}}
\]
with some constant $\mathcal{K}_0>0$. 
\end{proof}

\section*{Appendix}
\renewcommand{\thesection}{A}
\setcounter{theorem}{0}
\setcounter{equation}{0}

To reader's convenience, 
we collect some properties of  solutions to 
Kummer's confluent hypergeometric differential equation
\begin{equation}\label{kummer.eq}
su''(s)+(c-s)u'(s)-bu(s)=0, \quad s\in (0,\infty).
\end{equation}
(see e.g., Beals--Wong \cite{BW}). 

\begin{definition}\label{def.kummer}
Let $b,c\in\R$ satisfy $-c\notin \N\cup \{0\}$. Then define 
Kummer's confluent hypergeometric function of first kind 
as follows:
\[
M(b,c;s)=
\sum_{n=0}^{\infty}\frac{b_n}{c_n}\,\frac{s^n}{n!}, \quad s\in (0,\infty), 
\]
where $b_0=c_0=1$, $b_n=\prod_{k=1}^n(b+k-1)$ and $c_n=\prod_{k=1}^n(c+k-1)$ for $n\in \N$. 
For $b>0$, we also define 
Kummer's confluent hypergeometric function of second kind
as follows
\[
U(b,c;s)=\frac{1}{\Gamma(b)}
\int_{0}^\infty
  e^{-\sigma s}\sigma^{b-1}(1+\sigma)^{c-b-1}\,d\sigma, \quad s\in (0,\infty).
\]
\end{definition}

\begin{remark}
If $b$ is a negative integer, then 
$b_n=0$ for every $n>-b$. Therefore $M(b,c,s)$ is a polynomial of degree $-b$.
The function $M(b,c;s)$ tends to $1$ as $s\to 0$
and the function $U(b,c;s)$ tends to $\infty$ as $s\to 0$.
\end{remark}

The following lemma is a collection of properties 
stated in \cite{BW}, which are needed in the present paper. 
\begin{lemma}\label{Kummer}
The following three assertions hold:
\begin{itemize}
\item[\bf (i)]\ The pair $(M(b,c;\cdot), U(b,c;\cdot))$ 
is a fundamental system of the equation \eqref{kummer.eq}. 
\item[\bf (ii)]\ If $c>b>0$, then 
$M(b,c;s)$ and satisfies 
$M(b,c;s)\sim \frac{\Gamma(c)}{\Gamma(b)}s^{b-c}e^s $ 
as $s\to \infty$, more precisely, 
\[
\lim_{s\to \infty}
\left(\frac{M(b,c;s)}{s^{b-c}e^s}\right)=\frac{\Gamma(c)}{\Gamma(b)}.
\]
\item[\bf (iii)]\ If $b>0$, then 
$U(b,c;s)$ satisfies 
$U(b,c;s)\sim s^{-b}$ as $s\to \infty$, more precisely, 
\[
\lim_{s\to \infty}
\left(\frac{U(b,c;s)}{s^{-b}}\right)=1.
\]
\end{itemize}
\end{lemma}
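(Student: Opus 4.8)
The plan is to establish the three assertions separately, working directly from the series and integral representations in Definition \ref{def.kummer}. For {\bf (i)}, I would first check that each function solves \eqref{kummer.eq}. Substituting the power series $M(b,c;s)=\sum_{n\geq 0}\frac{b_n}{c_n}\frac{s^n}{n!}$ into the operator $s\frac{d^2}{ds^2}+(c-s)\frac{d}{ds}-b$ and collecting the coefficient of $s^n$, the recursions $b_{n+1}=(b+n)b_n$ and $c_{n+1}=(c+n)c_n$ force every coefficient to vanish. For $U$, I would differentiate the integral representation under the integral sign (legitimate for $s>0$ thanks to the factor $e^{-\sigma s}$) and integrate by parts once in $\sigma$; the boundary contributions at $\sigma=0,\infty$ vanish and the remaining integrand reassembles into $(s\pa_s^2+(c-s)\pa_s-b)U=0$. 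Linear independence is then immediate from the boundary behaviour noted after Definition \ref{def.kummer}: $M(b,c;s)\to 1$ whereas $U(b,c;s)\to\infty$ as $s\downarrow 0$, so $M$ and $U$ cannot be proportional and hence form a fundamental system.

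For {\bf (iii)}, I would apply Watson's lemma to the integral representation of $U$. The substitution $\sigma=\tau/s$ gives
\[
U(b,c;s)=\frac{s^{-b}}{\Gamma(b)}\int_0^\infty e^{-\tau}\tau^{b-1}(1+\tau/s)^{c-b-1}\,d\tau,
\]
and since $(1+\tau/s)^{c-b-1}\to 1$ pointwise while being dominated by $(1+\tau)^{|c-b-1|}$ for $s\geq 1$, dominated convergence yields $\lim_{s\to\infty}s^{b}U(b,c;s)=\frac{1}{\Gamma(b)}\int_0^\infty e^{-\tau}\tau^{b-1}\,d\tau=1$, which is exactly the claim.

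For {\bf (ii)}, I would first derive the Euler-type representation valid for $c>b>0$,
\[
M(b,c;s)=\frac{\Gamma(c)}{\Gamma(b)\Gamma(c-b)}\int_0^1 e^{st}\,t^{b-1}(1-t)^{c-b-1}\,dt,
\]
by expanding $e^{st}$ and evaluating each term with the Beta integral $\int_0^1 t^{b+n-1}(1-t)^{c-b-1}\,dt=\frac{\Gamma(b+n)\Gamma(c-b)}{\Gamma(c+n)}$. The factor $e^{st}$ concentrates the mass near the endpoint $t=1$, so the substitution $t=1-u/s$ together with Laplace's method gives the leading term $\frac{\Gamma(c)}{\Gamma(b)\Gamma(c-b)}e^{s}s^{b-c}\int_0^\infty e^{-u}u^{c-b-1}\,du=\frac{\Gamma(c)}{\Gamma(b)}s^{b-c}e^{s}$ after cancelling $\Gamma(c-b)$. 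The main obstacle is making this endpoint asymptotics rigorous: in contrast with {\bf (iii)}, the weight $(1-t)^{c-b-1}$ may be singular or vanishing at $t=1$ depending on the sign of $c-b-1$, so I would split the integral over $[0,1-\delta]$ and $[1-\delta,1]$, bound the former by a multiple of $e^{s(1-\delta)}$ (exponentially smaller than $e^{s}s^{b-c}$), and control the remainder in the Laplace expansion on the latter uniformly in $s$. This error estimate, rather than the formal computation, is where the care is needed.
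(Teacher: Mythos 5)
The paper never proves this lemma at all: it is presented as a collection of facts quoted from Beals--Wong \cite{BW}, so your self-contained argument is by construction a different route, and its substance is correct. All four ingredients work. The recursions $b_{n+1}=(b+n)b_n$, $c_{n+1}=(c+n)c_n$ do make every coefficient of $(s\partial_s^2+(c-s)\partial_s-b)M$ vanish; the integration by parts for $U$ is legitimate because the boundary term $e^{-\sigma s}\sigma^{b}(1+\sigma)^{c-b}$ vanishes at $\sigma=0$ (since $b>0$) and at $\sigma=\infty$ (since $s>0$); in {\bf (iii)} the substitution $\sigma=\tau/s$ with dominating function $e^{-\tau}\tau^{b-1}(1+\tau)^{|c-b-1|}$ is exactly what is needed; and in {\bf (ii)} the Euler representation (justified by Tonelli, all terms being positive, the Beta integrals finite precisely because $c>b>0$) combined with the endpoint substitution $t=1-u/s$ gives
\[
\int_{1-\delta}^{1}e^{st}t^{b-1}(1-t)^{c-b-1}\,dt
=e^{s}s^{b-c}\int_{0}^{\delta s}e^{-u}u^{c-b-1}\Bigl(1-\frac{u}{s}\Bigr)^{b-1}\,du,
\]
where the last integral tends to $\Gamma(c-b)$ by dominated convergence (the factor $(1-u/s)^{b-1}$ is bounded by $\max\{1,(1-\delta)^{b-1}\}$ on the range of integration), while the piece over $[0,1-\delta]$ is $O(e^{(1-\delta)s})$ and hence negligible. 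What your approach buys is a self-contained appendix in which one sees exactly where each hypothesis ($b>0$, $c>b$) enters; what the paper's citation buys is brevity and coverage of parameter ranges that your argument does not reach.

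That last point is the one step you should tighten. Your linear-independence argument in {\bf (i)} leans on the paper's remark that $U(b,c;s)\to\infty$ as $s\downarrow0$, but that remark is correct only for $c\geq1$: when $0<c<1$ the defining integral converges at $s=0$ and $U(b,c;0^{+})=\frac{1}{\Gamma(b)}B(b,1-c)=\frac{\Gamma(1-c)}{\Gamma(b+1-c)}<\infty$, so $M$ and $U$ are both bounded near the origin and the proportionality contradiction evaporates. This is harmless for the present paper, which only ever applies the lemma with $c=\frac{N-\alpha}{2-\alpha}\geq1$ (recall $N\geq2$ and $\alpha\in[0,1)$), but the lemma is stated for arbitrary admissible $(b,c)$. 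To cover $c<1$ you would need a genuinely different argument, for instance computing the Wronskian, which by Abel's formula equals $Cs^{-c}e^{s}$ with the nonvanishing of $C$ being the real content, or invoking the connection formula expressing $U$ through $M(b,c;\cdot)$ and $s^{1-c}M(b-c+1,2-c;\cdot)$. So either restrict {\bf (i)} to $c\geq1$, which suffices for everything in this paper, or retain the citation to \cite{BW} for the general statement.
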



\section*{Acknowledgments}

This work is supported by 
Grant-in-Aid for JSPS Fellows 15J01600 
of Japan Society for the Promotion of Science
and 
also 
partially supported 
by Grant-in-Aid for Young Scientists Research (B), 
No.\ 16K17619.



\begin{thebibliography}{30}

\bibitem{AGG06}
    W. Arendt, R.G.Goldstein, J.A.Goldstein, 
    Outgrowths of Hardy's inequality,
    \emph{Recent advances in differential equations and mathematical physics}, 
    51--68, \emph{Contemp. Math.}, {\bf 412}, Amer. Math. Soc., Providence, RI, 2006. 

\bibitem{BW}
    R. Beals, R. Wong, 
    ``Special functions,''
    A graduate text. Cambridge Studies in Advanced Mathematics {\bf 126}, 
    Cambridge University Press, Cambridge, 2010.
 
\bibitem{ChHa03}
R. Chill, A. Haraux,
An optimal estimate for the difference of solutions of two abstract evolution equations,
J. Differential Equations {\bf 193} (2003), 385--395.


\bibitem{HoOg04}
    T. Hosono, T. Ogawa,
    Large time behavior and $L^p$-$L^q$ estimate of
solutions of 2-dimensional nonlinear damped wave equations,
J. Differential Equations {\bf 203} (2004), 82--118.


\bibitem{Ik68} 
    M. Ikawa,
    Mixed problems for hyperbolic equations of second order,
    \emph{J. Math.\ Soc.\ Japan} {\bf 20} (1968), 580--608.

\bibitem{Ikbook} 
    M. Ikawa,
    Hyperbolic partial differential equations and wave phenomena,
    American Mathematical Society (2000).
    
 \bibitem{Ik02}
 R. Ikehata,
 Diffusion phenomenon for linear dissipative wave equations in an exterior domain,
J. Differential Equations {\bf 186} (2002), 633--651.


\bibitem{Ik05IJPAM} 
    R. Ikehata,
    Some remarks on the wave equation with potential type damping coefficients,
    \emph{Int.\ J. Pure Appl.\ Math.}\ {\bf 21} (2005), 19--24.

\bibitem{IT05}
    R. Ikehata, K. Tanizawa, 
    Global existence of solutions for semilinear damped wave equations 
    in $\R^N$ with noncompactly supported initial data, 
    \emph{Nonlinear Anal.} {\bf 61} (2005), 1189--1208.
    
\bibitem{Kar00}
    G. Karch,
    Selfsimilar profiles in large time asymptotics of solutions to damped wave equations,
    Studia Math.\ {\bf 143} (2000), 175--197.
    
 \bibitem{LNZ10}
    J. Lin, K. Nishihara, J. Zhai, 
    $L^2$-estimates of solutions for damped wave equations 
    with space-time dependent damping term, 
    \emph{J.\ Differential Equations} {\bf 248} (2010), 403--422.


\bibitem{MaNi03}
     P. Marcati, K. Nishihara,
     The $L^p$-$L^q$ estimates of solutions to
one-dimensional damped wave equations and
their application to the compressible flow through porous media,
J. Differential Equations {\bf 191} (2003), 445--469.

\bibitem{Ma76}
    A. Matsumura,
    On the asymptotic behavior of solutions of semi-linear wave equations,
    \emph{Publ.\ Res.\ Inst.\ Math.\ Sci.}\ {\bf 12} (1976), 169--189.

\bibitem{Ma77}
    A. Matsumura,
    Energy decay of solutions of dissipative wave equations,
    \emph{Proc.\ Japan Acad., Ser.\ A} {\bf 53} (1977), 232--236.

\bibitem{Mitidieri00}
    Mitidieri, E., 
    A simple approach to Hardy inequalities (Russian) 
    {\emph Mat.\ Zametki} {\bf 67} (2000), 563--572; 
    translation in \emph{Math.\ Notes} {\bf 67} (2000), 479--486.

\bibitem{Mo76} 
    K. Mochizuki,
    Scattering theory for wave equations with dissipative terms,
    \emph{Publ. Res. Inst. Math. Sci.} {\bf 12} (1976), 383--390.
    
\bibitem{Na04}
    T. Narazaki,
    $L^p$-$L^q$ estimates for damped wave equations
and their applications to semi-linear problem,
J. Math.\ Soc.\ Japan {\bf 56} (2004), 585--626.


\bibitem{Nishihara03}
    K. Nishihara, 
    $L^p$-$L^q$ estimates of solutions to the damped wave equation 
    in $3$-dimensional space and their application, 
    \emph{Math. Z.} {\bf 244} (2003), 631--649.

\bibitem{Nishihara10}
    K. Nishihara, 
    Decay properties for the damped wave equation 
    with space dependent potential and absorbed semilinear term, 
    \emph{Comm.\ Partial Differential Equations} {\bf 35} (2010), 1402--1418.

\bibitem{RTY09}
    P. Radu, G. Todorova, and B. Yordanov, 
    Higher order energy decay rates for damped wave equations 
    with variable coefficients, 
    \emph{Discrete Contin.\ Dyn.\ Syst.\ Ser.\ S}, {\bf 2}\ (2009), 609--629.

\bibitem{RTY10}
    P. Radu, G. Todorova, and B. Yordanov, 
    Decay estimates for wave equations with variable coefficients, 
    \emph{Trans.\ Amer.\ Math.\ Soc.}, {\bf 362}\ (2010), 2279--2299. 

\bibitem{Ikehata00} 
    A. Saeki, R. Ikehata, 
    Remarks on the decay rate for the energy of the dissipative linear wave equations in exterior domains, 
    \emph{SUT\ J.\ Math.\ } {\bf 36} (2000), 267--277.

\bibitem{So_Wa1}
    M. Sobajima and Y. Wakasugi, 
    Diffusion phenomena for the wave equation 
    with space-dependent damping in an exterior domain,
    \emph{J. Differential Equations}, 
    {\bf 261} (2016), 5690--5718. 

\bibitem{So_Wa2}
    M. Sobajima and Y. Wakasugi, 
    Remarks on an elliptic problem arising 
    in weighted energy estimates 
    for wave equations 
    with space-dependent damping term 
    in an exterior domain, 
    \emph{AIMS Mathematics}, 
    {\bf 2} (2017), 1--15. 

\bibitem{ToYo01}
    G. Todorova, B. Yordanov,
    Critical exponent for a nonlinear wave equation with damping,
    \emph{J. Differential Equations} {\bf 174} (2001), 464--489.

\bibitem{ToYo09} 
    G. Todorova, B. Yordanov,
    Weighted $L^2$-estimates for dissipative wave equations with
    variable coefficients,
    \emph{J. Differential Equations} {\bf 246} (2009), 4497--4518.

\bibitem{Wa12}
    Y. Wakasugi, 
    Small data global existence for the semilinear wave equation 
    with space-time dependent damping, 
    \emph{J.\ Math.\ Anal.\ Appl.} {\bf 393} (2012), 66--79.
\bibitem{Wa14}
    Y. Wakasugi,
    On diffusion phenomena for the linear wave equation with space-dependent damping,
    \emph{J.\ Hyp.\ Diff.\ Eq.}\ {\bf 11} (2014), 795--819.
    
\bibitem{YaMi00}
H. Yang, A. Milani,
On the diffusion phenomenon of quasilinear hyperbolic waves,
Bull. Sci. Math. {\bf124} (2000), 415--433.
\end{thebibliography}
\end{document}